\theoremstyle{plain}
\newtheorem{thmx}{Theorem}
\renewcommand{\thethmx}{\Alph{thmx}} 
\newtheorem{thm}{Theorem}[section]  
\newtheorem{lem}[thm]{Lemma}
\newtheorem{proposition}[thm]{Proposition}
\newtheorem{cor}[thm]{Corollary}
\newtheorem{claim}[thm]{Claim}
\theoremstyle{definition}
\newtheorem{dfn}[thm]{Definition}
\theoremstyle{remark}
\newtheorem{rem}[thm]{Remark}
\numberwithin{equation}{section}  
\theoremstyle{plain}
\newlist{thmlist}{enumerate}{1}
\setlist[thmlist]{wide = 0pt, labelwidth = 2em, labelsep*=0em, itemindent = 0pt, leftmargin = \dimexpr\labelwidth + \labelsep\relax, noitemsep,topsep = 1ex, font=\normalfont, label=(\roman*), ref=\thethm.(\roman{thmlisti})}
\newlist{thmenum}{enumerate}{1} 
\setlist[thmenum]{wide = 0pt, labelwidth = 2em, labelsep*=0em, itemindent = 0pt, leftmargin = \dimexpr\labelwidth + \labelsep\relax, noitemsep,topsep = 1ex, font=\normalfont, label=(\roman*), ref=\thethmx.(\roman{thmenumi})}
\newlist{corlist}{enumerate}{1} 
\setlist[corlist]{wide = 0pt, labelwidth = 2em, labelsep*=0em, itemindent = 0pt, leftmargin = \dimexpr\labelwidth + \labelsep\relax, noitemsep,topsep = 1ex, font=\normalfont, label=(\roman*), ref=\thecorx.(\roman{corlisti})}
\newsavebox{\@brx}
\newcommand{\llangle}[1][]{\savebox{\@brx}{\(\m@th{#1\langle}\)}%
	\mathopen{\copy\@brx\kern-0.5\wd\@brx\usebox{\@brx}}}
\newcommand{\rrangle}[1][]{\savebox{\@brx}{\(\m@th{#1\rangle}\)}%
	\mathclose{\copy\@brx\kern-0.5\wd\@brx\usebox{\@brx}}}
\crefname{lem}{Lemma}{Lemmas} 
\crefname{conjecture}{Conjecture}{Conjectures}
\crefname{thm}{Theorem}{Theorems}
\crefname{proposition}{Proposition}{Propositions}
\crefname{dfn}{Definition}{Definitions}
\crefname{rem}{Remark}{Remarks}
\crefname{cor}{Corollary}{Corollaries}
\crefname{corx}{Corollary}{Corollaries}
\crefname{problem}{Problem}{Problems}
\crefname{thmx}{Theorem}{Theorems}
\crefname{claim}{Claim}{Claims}
\crefname{assumption}{Assumption}{Assumptions}
\crefname{main}{Main Theorem}{Main Theorems}
\def\ep{\varepsilon}
\def\rank{{\rm rank}\,}
\newcommand*{\rom}[1]{\expandafter\@slowromancap\romannumeral #1@}
\newcommand{\crefnames}[3]{%
	\@for\next:=#1\do{%
		\expandafter\crefname\expandafter{\next}{#2}{#3}%
	}%
}
\newcommand{\cL}{\mathcal L}
\newcommand{\cO}{\mathcal O}
\newcommand{\bC}{\mathbb{C}}
\newcommand{\bP}{\mathbb{P}}
\newcommand{\bR}{\mathbb{R}}
\newcommand{\bZ}{\mathbb{Z}}
 \def\d{\partial} 
\def\hess{{\rm d}{\rm d}^{\rm c}}
\def\sn{\sqrt{-1}}
\def\End{{\rm \small  End}}
\newcommand{\shb}{(E,\theta,h)}
\begin{document} 
 	\title[Hyperbolicity of varieties admitting harmonic bundles]{Picard hyperbolicity of manifolds\\ admitting  
 		nilpotent  harmonic bundles}

	\author{Beno\^it Cadorel} 
\address{Institut \'Elie Cartan de Lorraine, Universit\'e de Lorraine, F-54000 Nancy,
	France.}	
\email{benoit.cadorel@univ-lorraine.fr}
	\urladdr{http://www.normalesup.org/~bcadorel/}
	\author{Ya Deng} 
 \address{CNRS, Institut \'Elie Cartan de Lorraine, Universit\'e de Lorraine, F-54000 Nancy,
 	France.}
 
	\email{ya.deng@univ-lorraine.fr \quad ya.deng@math.cnrs.fr}
	
	\urladdr{https://ydeng.perso.math.cnrs.fr} 
	\date{\today} 
	  
\begin{abstract} 
For a quasi-compact K\"ahler manifold $U$ endowed with a nilpotent harmonic bundle whose Higgs field is injective at one point, we prove that  $U$ is pseudo-algebraically hyperbolic, pseudo-Picard hyperbolic, and is of log general type. Moreover, we  prove that there is a finite unramified cover $\tilde{U}$ of $U$ from a quasi-projective manifold $\tilde{U}$ so that any projective compactification of $\tilde{U}$ is pseudo-algebraically hyperbolic, pseudo-Picard hyperbolic and is of general type.  As a byproduct, we establish some criterion of pseudo-Picard hyperbolicity and pseudo-algebraic hyperbolicity for quasi-compact K\"ahler manifolds. 
\end{abstract} 
\subjclass[2010]{32H25, 14D07, 32H30, 32Q45}
\keywords{Picard hyperbolicity,  nilpotent harmonic bundles, holomorphic sectional curvature, criterion for Picard hyperbolicity, Nevanlinna characteristic function, non-K\"ahler locus}
	\maketitle
\tableofcontents	
	\section{Introduction} 
	\subsection{Main results}
	The notion of Picard hyperbolicity for quasi-compact K\"ahler manifolds, which was introduced in \cite{JK18,Den21}, is motivated by the classical big Picard theorem, which states that a holomorphic map \(\Delta^{\ast} \to \mathbb{P}^{1} \setminus \{0, 1 \infty\}\) extends as holomorphic map to the whole disk \(\Delta\). Complex manifolds sharing this property with \(\mathbb{P}^{1} \setminus\{0, 1, \infty\}\) are then said to be {\em Picard hyperbolic}. This notion turns out to be an important hyperbolicity property since it implies the algebraicity of analytic maps from quasi-projective manifolds to Picard hyperbolic ones; this was first proven in \cite{JK18}. The study of Picard hyperbolicity continues to have interesting developments: see e.g.\ the work of He--Ru \cite{HR21} where a quantitative version is introduced, or Etesse \cite{Ete20}, who introduces a notion of intermediate Picard hyperbolicity, and gives applications to finiteness properties of automorphism groups. 
	
	In \cite{Den21}, the second named author proved the Picard hyperbolicity for quasi-compact K\"ahler manifolds admitting a complex polarized variation of Hodge structures ($\bC$-PVHS for short) whose period map has zero dimensional fibers. $\bC$-PVHS is a subcategory of  \emph{nilpotent harmonic bundles}. Our goal of this paper is to extend the  results in \cite{Den21} to manifolds admitting   {nilpotent harmonic bundles}. The first result is the following.
	
	\begin{thmx}\label{main}
		Let $U$ be a quasi-compact K\"ahler manifold. Assume that there is a   nilpotent harmonic bundle $(E,\theta,h)$ over $U$ so that $\theta:T_{U}\to \End(E)$ is injective at one point.  Then $U$ is pseudo-Picard  hyperbolic, pseudo-algebraically hyperbolic, and is of log general type. Moreover, $U$ can be equipped with a \emph{unique} algebraic structure that makes it quasi-projective. 
	\end{thmx}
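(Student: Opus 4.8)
The plan is to fix a smooth compactification $X \supseteq U$ with simple normal crossing boundary $D := X \setminus U$ and to construct on it a Finsler pseudo-metric on the logarithmic tangent bundle $T_X(-\log D)$ whose holomorphic sectional curvature is bounded above by a negative constant outside a proper analytic subset $Z \subset X$, and whose behaviour near $D$ is controlled by a Poincar\'e-type bound. Granting such a metric, the first three conclusions will follow from a general criterion, established later in the paper, that extracts from a negatively curved Finsler pseudo-metric with mild boundary growth: (a) pseudo-Picard hyperbolicity, by an Ahlfors--Schwarz lemma together with Nevanlinna-theoretic growth estimates for holomorphic maps out of $\Delta^{\ast}$; (b) pseudo-algebraic hyperbolicity, by integrating the curvature negativity over curves via Gauss--Bonnet; and (c) bigness of $\omega_X(D) = K_X + D$ --- hence log general type --- by the positivity of the (singular) volume form associated to the Finsler metric, which is strictly positive where $\theta$ is injective.

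The crux is to build this metric out of the data $(E,\theta,h)$, and here I would adapt the Griffiths-type curvature computation from the theory of period maps and variations of Hodge structure. Choosing a filtration $F^{\bullet}E$ adapted to $\theta$, the iterates of the Higgs field $\theta\colon T_U \to \operatorname{End}(E)$ supply, for each $k \geq 1$, a morphism $\operatorname{Sym}^{k}T_U \to \sHom(\mathrm{Gr}^{p}_{F}E,\, \mathrm{Gr}^{p-k}_{F}E)$; dualizing these and assembling them produces a symmetric differential on $U$ whose curvature is governed by the Hodge-theoretic positivity of the graded pieces and by the curvature of $h$, and which has negative holomorphic sectional curvature exactly where $\theta$ is injective. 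Nilpotency of $\theta$ ensures that only finitely many $k$ contribute, so the construction terminates; and the hypothesis that $\theta$ is injective at one point guarantees that this good locus is a nonempty Zariski-open set, its complement being the proper subset $Z$ responsible for the word ``pseudo''.

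\emph{The hardest part is the boundary analysis near $D$}, which is where most of the technical effort will go. A nilpotent harmonic bundle is tame, so by the work of Simpson and Mochizuki the Higgs bundle $(E,\theta)$ admits a filtered (parabolic) extension $(\diae,\theta)$ on $(X,D)$ carrying a logarithmic Higgs field $\theta\colon \diae \to \diae \otimes \Omega^{1}_{X}(\log D)$, and the harmonic metric $h$ obeys precise norm estimates --- polynomial in the quantities $-\log|z_{i}|$, with exponents read off from the parabolic weights --- along $D$. These estimates are exactly what is needed: on one hand, to verify that the Finsler pseudo-metric above extends to $T_X(-\log D)$ with the growth demanded by the abstract criterion; on the other, to convert Brody-type nonexistence of entire curves into genuine extension of punctured-disk maps across $D$, which is what upgrades hyperbolicity to Picard hyperbolicity. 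Combining these gives pseudo-Picard hyperbolicity, pseudo-algebraic hyperbolicity, and log general type.

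Finally, bigness of $K_X + D$ makes $X$ Moishezon, and a Moishezon compact K\"ahler manifold is projective; hence $X$ is projective and $U = X \setminus D$ is quasi-projective. For uniqueness, if $U_{1}$ and $U_{2}$ are two quasi-projective structures on the complex manifold $U$, then the identity map $U_{1}^{\mathrm{an}} \to U_{2}^{\mathrm{an}}$ and its inverse are holomorphic maps from a quasi-projective manifold into a pseudo-Picard hyperbolic one whose image is not contained in the degeneracy locus, hence algebraic by the algebraicity theorem recalled in the introduction; therefore $U_{1}$ and $U_{2}$ are isomorphic as algebraic varieties, and the algebraic structure is unique.
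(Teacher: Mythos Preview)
Your plan has a genuine gap in the metric construction. You propose to choose a filtration $F^\bullet E$ ``adapted to $\theta$'' and run a Griffiths-type curvature computation on the graded pieces, but a nilpotent harmonic bundle need not carry any holomorphic filtration satisfying Griffiths transversality $\theta(F^p)\subset F^{p-1}\otimes\Omega^1_U$, let alone one whose graded pieces are $h$-orthogonal in the way that produces ``Hodge-theoretic positivity''. That package is exactly what singles out a $\bC$-PVHS inside the broader class of nilpotent harmonic bundles. Nilpotency of $\theta$ only says that each $\theta(v)\in\End(E_x)$ is nilpotent; these operators vary with $v$ and $x$, and there is no canonical global filtration to extract. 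The paper flags this obstruction in the introduction: the Griffiths line bundle machinery from \cite{Den21,BB20,Bru20} is unavailable here, and a replacement must be found.

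What the paper does instead is much more direct. One forms the $(1,1)$-form $\omega=-i\,\mathrm{tr}(\theta_h^\ast\wedge\theta)$, i.e.\ the pull-back of the harmonic metric on $\End(E)$ by $\theta:T_U\to\End(E)$; this is a genuine \emph{closed} pseudo-K\"ahler form (\cref{lem:kahler}). Its bisectional curvature is non-positive by curvature-decrease for subbundles, and its holomorphic sectional curvature is bounded above by a negative constant thanks to the elementary inequality $|[A,A^\ast]|\geq 2^{1-n}|A|^2$ for nilpotent $A$ (\cref{lem:algebra}) --- this is the sole use of nilpotency. The closedness of $\omega$ is essential: the Picard criterion (\cref{thm:criterion}) is stated and proved for pseudo-K\"ahler forms, not Finsler or Hermitian metrics, because the argument passes through the cohomology class of the trivial extension $\varpi$ and compares Nevanlinna characteristic functions against K\"ahler currents in $\{\varpi\}$; the paper notes it is unclear whether a mere $(1,1)$-hermitian form would suffice, so your Finsler route would at best require a stronger curvature hypothesis (as in \cite{DLSZ}) that you have not secured. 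The boundary analysis is also simpler than you anticipate: no Simpson--Mochizuki norm estimates are needed, only the Ahlfors--Schwarz bound $\omega\lesssim\omega_P$ in admissible charts, which already forces the local potentials of $\varpi$ to have log--log growth and hence zero Lelong numbers (\cref{lem:crucial}). Your arguments for projectivity and for uniqueness of the algebraic structure are essentially correct and match the paper's.
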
 
See \cref{def:nilpotent,def:DC,def:Picard} for definitions of nilpotent harmonic bundles, pseudo-algebraic hyperbolicity and pseudo-Picard hyperbolicity.
Using ideas of \cite{Den21}, we can prove a stronger result on the hyperbolicity of compactifications after taking finite unramified cover of $U$, which is the main result of this paper.
 \begin{thmx}[$\subset$\cref{strong}]\label{main2}
	Let $U$ be a quasi-compact K\"ahler manifold.  Assume that    there is a nilpotent harmonic bundle $(E,\theta,h)$ on $U$ so that $\theta:T_{U}\to \End(E)$ is injective at one point.   Then there is a finite unramified cover $\tilde{U}\to U $ from a quasi-projective manifold $\tilde{U}$ so that any smooth projective compactification $X$ of $\tilde{U}$   is  of  general type, pseudo-algebraically hyperbolic and     pseudo-Picard hyperbolic. 
\end{thmx}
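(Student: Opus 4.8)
The plan is to deduce \cref{main2} from a sharper statement — the one labelled \cref{strong} in the excerpt — which produces, on a well chosen model, a Finsler pseudo‑metric on the logarithmic tangent bundle that is negatively curved away from a proper subvariety and that dominates a Poincar\'e‑type metric near the boundary; the three conclusions then follow from the pseudo‑Picard and pseudo‑algebraic hyperbolicity criteria announced in the abstract. \emph{Step 1 (reduction and the finite cover).} Using \cref{main} I would regard $U$ as a quasi‑projective manifold and fix a smooth projective compactification $(\overline U,D_{\overline U})$ with $D_{\overline U}=\overline U\setminus U$ simple normal crossing. Recall (see \cref{def:nilpotent}) that $(E,\theta,h)$ is a tame harmonic bundle with nilpotent Higgs field; in particular, by the theory of Simpson and Mochizuki it is a system of Hodge bundles $(E=\bigoplus_p E^p,\theta)$ with $\theta(E^p)\subset E^{p-1}\otimes\Omega^1_U$, and it prolongs to a parabolic/filtered Higgs bundle on $(\overline U,D_{\overline U})$ with rational weights and nilpotent residues. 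I would then pass to a Kawamata‑type finite cover $\widetilde U\to U$, \'etale over $U$ and ramified along $D_{\overline U}$, so as to arrange that on a smooth projective compactification $(X,D)$ of $\widetilde U$, with $D=X\setminus\widetilde U$ simple normal crossing, the pulled‑back harmonic bundle prolongs to a Higgs bundle $(\cE=\bigoplus_p\cE^p,\theta)$, $\theta\colon\cE^p\to\cE^{p-1}\otimes\Omega^1_X(\log D)$, with \emph{trivial} parabolic structure — the Deligne canonical extension with nilpotent residues; the hypothesis is preserved, $\theta\colon T_{\widetilde U}\to\End(\widetilde E)$ being injective at one point. Since the canonical dimension is a smooth‑birational invariant and pseudo‑algebraic and pseudo‑Picard hyperbolicity are stable under smooth bimeromorphic modifications up to enlarging the exceptional set, it then suffices to establish the three properties for this one distinguished model $X$.

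\emph{Step 2 (the negatively curved Finsler pseudo‑metric).} Writing $\widetilde h$ for the harmonic metric on $\cE$ and $\theta^{[k]}\colon\operatorname{Sym}^k T_X(-\log D)\to\End(\cE)$ for the $k$‑fold iterate of $\theta$ (well defined since $\theta\wedge\theta=0$), I would build, following \cite{Den21}, the Finsler pseudo‑metric
\[
\cF(v)=\Bigl(\sum_{k\ge 1}\gamma_k\,\bigl\lvert\theta^{[k]}(v^{\otimes k})\bigr\rvert_{\widetilde h}^{2/k}\Bigr)^{1/2}\qquad(v\in T_{\widetilde U}),
\]
with positive constants $\gamma_k$ arranged so that the curvature terms telescope. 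The Hodge–Simpson identities for $\widetilde h$ — pluriharmonicity, $\overline\partial\theta=0$, flatness of $D_{\widetilde h}+\theta+\theta^{\dagger}_{\widetilde h}$ — give, via a Griffiths‑type second fundamental form computation, that the holomorphic sectional curvature of $\cF$ is bounded above by a negative constant on $\widetilde U\setminus Z_0$, where $Z_0$ is the locus of degeneracy of $\theta$ and its iterates (proper in $\widetilde U$ by the hypothesis, with proper analytic closure in $X$). The delicate part is the behaviour along $D$: invoking Mochizuki's norm estimates for tame harmonic bundles — which, because the parabolic weights vanish and the residues are nilpotent, say that in adapted frames $\widetilde h$ is mutually bounded with a metric diagonal in powers of $\log(1/\lvert z_i\rvert)$ — I would show that $\cF$ extends to a continuous (possibly degenerate) Finsler pseudo‑metric on $T_X(-\log D)$ over all of $X$ and that near $D$ it dominates a fixed multiple of a complete finite‑volume Poincar\'e‑type metric on $\widetilde U$, after if necessary enlarging $Z_0$ by the boundary components along which the transversal non‑degeneracy of $\theta$ fails. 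Let $Z\subsetneq X$ be the resulting exceptional set; then $Z\cap\widetilde U\neq\widetilde U$.

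\emph{Step 3 (conclusions).} From $\cF$ the three assertions follow via the criteria in the body of the paper. For pseudo‑algebraic hyperbolicity of $X$: given an irreducible curve $C\not\subset Z$ with normalisation $\nu\colon\overline C\to X$, the Ahlfors–Schwarz lemma applied to $\nu^{*}\cF$ on $\overline C\setminus\nu^{-1}(D)$, together with the interior domination of a fixed K\"ahler metric off $Z$ and the Poincar\'e‑type domination near $D$, yields $2g(\overline C)-2+\#\nu^{-1}(D)\ge\varepsilon\deg_H C$ with $\varepsilon$ uniform. For general type of $X$: by \cref{main} applied to $\widetilde U$ the divisor $K_X+D$ is big, and the bigness of $K_X$ itself I would obtain as in \cite{Den21} from the prolonged system of Hodge bundles — the finite cover having trivialised the parabolic structure, the Griffiths‑semipositive Hodge subsheaves extend as honest nef subsheaves, the injectivity of $\theta$ makes the associated line bundle big, and a boundary analysis promotes bigness of $K_X+D$ to that of $K_X$. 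For pseudo‑Picard hyperbolicity of $X$: given holomorphic $f\colon\Delta^{*}\to X$ with $f(\Delta^{*})\not\subset Z$, the negative holomorphic sectional curvature of $\cF$ forces at most logarithmic growth of $f$ near the puncture (a Nevanlinna/Ahlfors–Schwarz bound on the characteristic function), and the Poincar\'e‑type domination near $D$ makes Noguchi's extension lemma applicable, so $f$ extends holomorphically across the puncture — which is exactly the pseudo‑Picard criterion for $(X,D,\cF)$.

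\emph{Main obstacle.} The heart of the matter — and the bulk of the new work beyond \cite{Den21}, which treated $\bC$‑PVHS — is Step 2: the curvature computation for $\cF$ and, above all, its asymptotic analysis along $D$. For a general nilpotent harmonic bundle the Hodge metric is governed by Mochizuki's tame asymptotics rather than by Schmid's $\mathrm{SL}_2$‑orbit theorem, so one must track with care the logarithmic corrections to $\widetilde h$ and the vanishing orders of $\theta$ and its iterates along $D$. This is simultaneously what dictates the correct exceptional set $Z$ — and the verification, using the one‑point injectivity hypothesis, that $Z$ stays a proper analytic subset of $X$ — and what underlies the domination of a Poincar\'e‑type metric near $D$, i.e.\ the transversal immersivity of the associated period map, which can genuinely fail along some boundary components and forces those to be absorbed into $Z$.
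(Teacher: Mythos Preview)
Your proposal rests on a misidentification that invalidates the whole strategy. In Step~1 you write that, ``by the theory of Simpson and Mochizuki,'' a nilpotent harmonic bundle ``is a system of Hodge bundles $(E=\bigoplus_p E^p,\theta)$ with $\theta(E^p)\subset E^{p-1}\otimes\Omega^1_U$.'' This is false. \emph{Nilpotent} in \cref{def:nilpotent} means only that $\det(t-\theta)=t^{\rank E}$, i.e.\ the Higgs field is a nilpotent endomorphism pointwise; it does \emph{not} give a Hodge grading of $E$. Systems of Hodge bundles are exactly the $\bC^*$-fixed harmonic bundles, a strict subclass, and the inclusion $\{\bC\text{-PVHS}\}\subsetneq\{\text{nilpotent harmonic bundles}\}$ is precisely what makes \cref{main2} a genuine extension of \cite{Den21}. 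Everything downstream in your argument --- the iterated Finsler metric built from $\theta^{[k]}$ acting on graded pieces, the ``Griffiths-semipositive Hodge subsheaves'' used to get bigness of $K_X$, the Kawamata cover trivialising a parabolic structure coming from a Deligne canonical extension --- presupposes this nonexistent grading. The paper is explicit about this obstruction: for nilpotent harmonic bundles there is no Griffiths line bundle, and the methods of \cite{Den21,BB20} ``cannot be applied directly.''

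What the paper does instead is replace the Griffiths line bundle by the transcendental big nef class $\{\varpi\}$, where $\varpi$ is the trivial extension to $Y$ of the pseudo-K\"ahler form $\omega=-i\,\mathrm{tr}(\theta_h^*\wedge\theta)$. The curvature bound comes from the elementary commutator estimate $|[\theta_i,\theta_i^*]|\geq 4^{1-\rank E}|\theta_i|^2$ for nilpotent $\theta_i$ (\cref{lem:algebra}), not from any Hodge filtration. The finite cover is produced via residual finiteness of the monodromy group (\cref{claim}), arranging that along each boundary component either the ramification order is at least a prescribed $m$ or the local monodromy of the associated local system is trivial; in the latter case the harmonic bundle extends as a nilpotent harmonic bundle across that component by Mochizuki's correspondence between tame pure imaginary harmonic bundles and semisimple local systems (\cref{prop:extension}) --- no norm estimates or parabolic weights enter. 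On the cover one then has $\{\varpi_2\}-2^{2\rank E-1}\pi\{D_X\}$ big, and \cref{criteria2}, \cref{acriteria2}, and \cref{lem:pseudo} give pseudo-Picard hyperbolicity, pseudo-algebraic hyperbolicity, and general type respectively. Your proposed curvature/asymptotic analysis of a Hodge-theoretic Finsler metric is not just hard --- it is aimed at an object that does not exist in this generality.
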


The proofs of \cref{main,main2} both rely on some new criterion of pseudo-Picard hyperbolicity and pseudo-algebraic hyperbolicity for  \emph{quasi-compact K\"ahler manifolds}, which is a novelty of this paper.
\begin{thmx}[$\subset$\cref{thm:criterion}+\cref{thm:algebraic}]\label{thmx:criterion}
	Let $Y$ be a compact K\"ahler manifold, and let $D$ be a simple normal crossing divisor on $Y$. Assume that $U:=Y-D$ is equipped with a pseudo-K\"ahler metric $\omega$  whose holomorphic sectional curvature   is bounded from above by a negative constant $-2\pi c$, then 
	\begin{thmlist}
		\item $U$ is pseudo-algebraically hyperbolic and pseudo-Picard hyperbolic.
		\item If the $(1,1)$ cohomology class $c\{\varpi\}-\{D\}$ is big, then $Y$ is pseudo-Picard hyperbolic and pseudo-algebraically hyperbolic. Here $\varpi$ is the closed positive $(1,1)$-current on $Y$ which is the trivial extension of $\omega$. 
	\end{thmlist}  
\end{thmx}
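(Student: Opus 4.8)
Everything is driven by the Ahlfors--Schwarz lemma, together with (for part (ii)) Demailly's description of a big class. Let $\Xi\subsetneq U$ denote the degeneracy locus of $\omega$, a proper closed analytic subset off which $\omega$ is a genuine K\"ahler metric of holomorphic sectional curvature $\le-2\pi c$. The exceptional locus in (i) will be $\overline{\Xi}\cup D\subsetneq Y$, and in (ii) it will in addition contain the non-K\"ahler locus of the big class $c\{\varpi\}-\{D\}$.

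\textbf{Part (i).} For \emph{pseudo-algebraic hyperbolicity}, let $C\subseteq Y$ be an irreducible curve with $C\not\subseteq D$ and $C\cap U\not\subseteq\Xi$, with normalisation $\nu\colon\bar C\to Y$ and $\bar C^{\ast}:=\nu^{-1}(U)$. The Ahlfors--Schwarz argument applied to the identity $\bar C^{\ast}\to\bar C^{\ast}$ shows $\bar C^{\ast}$ cannot be $\bP^{1},\bC$ or $\bC^{\ast}$ (otherwise $\nu^{\ast}\omega$, which is $\not\equiv0$ since $C\cap U\not\subseteq\Xi$, would violate the curvature bound), so $\bar C^{\ast}$ carries a complete Poincar\'e metric $\omega_{\bar C^{\ast}}$ of curvature $-1$; then Ahlfors--Schwarz (valid for the degenerate form $\nu^{\ast}\omega$, whose isolated zeros only help the maximum principle) gives $\nu^{\ast}\omega\le\frac1{2\pi c}\omega_{\bar C^{\ast}}$, and integrating with Gauss--Bonnet yields
\[
\deg_{\omega}(C)\ \le\ \tfrac1c\bigl(2g(\bar C)-2+\#\nu^{-1}(D)\bigr).
\]
For \emph{pseudo-Picard hyperbolicity}, let $g\colon\Delta^{\ast}\to U$ with $g(\Delta^{\ast})\not\subseteq\Xi$; endowing $\Delta^{\ast}$ with its Poincar\'e metric $\omega_{P}$ (curvature $-1$, finite area near $0$), Ahlfors--Schwarz gives $g^{\ast}\omega\le\frac1{2\pi c}\omega_{P}$, so $g$ has finite $\omega$-area near the puncture; one then upgrades this to a holomorphic extension $\bar g\colon\Delta\to Y$ by a removable-singularity argument (the closure of the graph in $\Delta\times Y$ has locally finite area, hence is an analytic curve projecting isomorphically onto $\Delta$).

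\textbf{Part (ii).} Assume $c\{\varpi\}-\{D\}$ is big; then $Y$ is Moishezon, hence (being K\"ahler) projective, and by Demailly's regularisation there is a closed positive current $T=c\varpi-[D]+dd^{c}\varphi\ge\varepsilon\,\omega_{Y}$ with analytic singularities along a subvariety $Z\subsetneq Y$. Take the exceptional locus to be $\overline\Xi\cup D\cup Z$ (together with the finitely many subvarieties where the normalisations above fail to be generic). For \emph{algebraic hyperbolicity of $Y$}: for $C$ outside the exceptional locus, intersecting $T\ge\varepsilon\omega_{Y}$ with $C$ gives $\varepsilon\deg_{\omega_{Y}}(C)\le c\deg_{\varpi}(C)-(C\cdot D)$; using $\deg_{\varpi}(C)=\deg_{\omega}(C\cap U)$ (the trivial extension adds no mass along $D$, outside a fixed subvariety), part (i), and $(C\cdot D)\ge\#\nu^{-1}(D)$, the $\#\nu^{-1}(D)$-terms cancel and one is left with $\varepsilon\deg_{\omega_{Y}}(C)\le 2g(\bar C)-2$, i.e.\ pseudo-algebraic hyperbolicity. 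For \emph{Picard hyperbolicity of $Y$}: for $g\colon\Delta^{\ast}\to Y$ with image not in the exceptional locus (in particular $g(\Delta^{\ast})\not\subseteq D$), the bound $g^{\ast}\omega\le\frac1{2\pi c}\omega_{P}$ on $g^{-1}(U)$ together with $\varepsilon\,g^{\ast}\omega_{Y}\le g^{\ast}T=c\,g^{\ast}\varpi-g^{\ast}[D]+dd^{c}(\varphi\circ g)$ --- in which the subtracted effective divisor $g^{\ast}[D]\ge0$ only helps --- yields, after a Jensen/Stokes estimate controlling the $dd^{c}$-term by the boundedness above of $\varphi\circ g$, that $\int_{\{0<|z|<1/2\}}g^{\ast}\omega_{Y}<\infty$; now the genuine $Y$-area is finite and $g$ extends to $\Delta\to Y$ as in part (i).

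\textbf{Main obstacle.} The delicate step is converting an Ahlfors--Schwarz area bound into holomorphic extension across the puncture: Ahlfors--Schwarz only controls the \emph{degenerate} form $\omega$, whereas the graph-closure (Bishop) argument needs finiteness of the $Y$-area $\int g^{\ast}\omega_{Y}$, and $\omega$ may be far smaller than $\omega_{Y}$ near $D$. In part (ii) the bigness of $c\{\varpi\}-\{D\}$ is precisely the input that repairs this, through the current $T\ge\varepsilon\omega_{Y}$ and the favourable sign of the subtracted $[D]$; in part (i) one must instead run a Nevanlinna-type estimate directly on $\Delta^{\ast}$ to bound the characteristic $T_{g}(r,\omega_{Y})=O(\log\tfrac1r)$ as $r\to0$. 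A subsidiary recurring point is to justify Ahlfors--Schwarz for the pseudo-K\"ahler metric and the pullbacks $\nu^{\ast}\varpi$, $g^{\ast}\varpi$, $g^{\ast}[D]$ without creating spurious mass --- routine given the hypotheses on $\Xi$ and the finite-mass property built into $\varpi$.
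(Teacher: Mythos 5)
Your skeleton --- Ahlfors--Schwarz curvature estimates, a Nevanlinna/Jensen computation, and a Bishop-type graph-closure argument for the extension across the puncture --- matches the paper's in outline, and you correctly name the main obstacle. But in part (i) you do not resolve that obstacle, and this is a genuine gap. Your displayed inequality $\deg_{\omega}(C)\le\frac1c(2g(\bar C)-2+\#\nu^{-1}(D))$ bounds the degree with respect to the \emph{degenerate} form $\omega$, whereas pseudo-algebraic hyperbolicity requires $2g(\bar C)-2+i_Y(C,D)\ge\ep\deg_{\omega_Y}C$ for a fixed K\"ahler metric $\omega_Y$ on $Y$, uniformly in $C$; nothing in your argument compares $\int_C\omega$ with $\int_C\omega_Y$. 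The same issue recurs in your Picard argument for part (i): the graph closure in $\Delta\times Y$ needs finite area for $q_1^*\omega_e+q_2^*\omega_Y$, i.e.\ finite $\omega_Y$-area, not finite $\omega$-area, and you only gesture at ``a Nevanlinna-type estimate'' without saying what input would make it work. The missing input is precisely the bigness of the class $\{\varpi\}$ of the trivial extension, which you never invoke in part (i). The paper's route is: the Ahlfors--Schwarz bound against the Poincar\'e-type model metric shows $\omega$ has locally finite mass near $D$, so Skoda's theorem gives the closed positive extension $\varpi$; since $\int_U\omega^n>0$, Boucksom's criterion makes $\{\varpi\}$ big, hence there is a K\"ahler current $S\in\{\varpi\}$ with $S\ge\ep\omega_Y$, smooth off $E_{nK}(\{\varpi\})$. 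For curves not in $E_{nK}(\{\varpi\})$ one then gets $\{C\}\cdot\{\varpi\}\ge\ep\deg_{\omega_Y}C$ cohomologically; for Picard maps one compares $T_{f,\omega}$, $T_{f,\eta}$ ($\eta$ a smooth representative), $T_{f,S}$ and $T_{f,\omega_Y}$ up to $O(\log r+\log T)$ errors via Jensen's formula, using the $\log\log$ lower bound on the local potentials of $\varpi$. This is also why the correct exceptional set is $E_{nK}(\{\varpi\})$ (which is then shown, via Collins--Tosatti, to sit inside the degeneracy locus you call $\overline\Xi\cup D$), rather than $\overline\Xi\cup D$ taken as the starting point.

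Two smaller points. First, in part (ii) your Jensen/Stokes step needs the comparison $T_{f,\varpi}\le T_{f,\omega}+O(\log r+\log T_{f,\omega_Y})$, which again rests on the quantitative $\log\log$ estimate for the potentials of $\varpi$ near $D$; ``the trivial extension adds no mass along $D$'' is not by itself enough to run the characteristic-function bookkeeping. Second, the Nevanlinna inequality itself (the analogue of the paper's $c_1\log T_{f,\omega}(r)+c_2\log r+c_3\ge cT_{f,\omega}(r)-N^{[1]}_{f,D}(r)$) requires a calculus lemma (Borel's lemma applied twice) to absorb the derivative terms; this is standard but should be stated, since it is where the exceptional set of finite measure in $r$ enters.
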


\subsection{Related works}
    After the work \cite{JK18},  Picard hyperbolicity  drew  a  lot  of  attention  over the last years. In \cite{BBT18},  the authors proved the algebraicity of analytic maps from a quasi-projective manifold to another one admitting a quasi-finite period map.  In \cite{DLSZ}  the second named author with Lu, Sun and Zuo proved the Picard hyperbolicity for moduli of polarized manifolds with semi-ample canonical sheaf. In \cite{Den21}, the second named author  proved \cref{main,main2} when the nilpotent harmonic bundle $(E,\theta,h)$ is moreover a complex polarized variation of Hodge structures. Similar results were later also obtained by Brotbek-Brunebarbe \cite{BB20} and Brunebarbe \cite{Bru20}.  
    Indeed, this paper is strongly inspired by the  works \cite{Den21,BB20,Bru20}: the proof of \cref{thm:criterion} is inspired by the Second Main Theorem of Brotbek-Brunebarbe \cite{BB20}, especially by their study of Nevanlinna characteristic function relative to positive currents; the proof of \cref{main2} follows  the same line as  that of \cite[Theorem B]{Den21}.  Though the methods in \cite{Den21} and \cite{BB20,Bru20} are quite different, a common ingredient is the use of Griffiths line bundle for systems of Hodge bundles, which has a nice global positivity property. For nilpotent harmonic bundles, we do not have such  analogous algebraic  objects,  and thus the methods and results in \cite{Den21,BB20} cannot be applied directly. A novelty in this paper is the use of a transcendental cohomology big class $\{\varpi\}$ (see \cref{strong} for the precise definition)  which plays a similar role in the proof as the Griffiths line bundle for complex variation of Hodge structures.  This also enables us to simplify previous work \cite{Den21,BB20,Bru20} since we do not use deep results in Hodge theory such as Schmid's nilpotent orbit theorems and  Hodge norm estimates etc.
    
    	In \cite{DLSZ} the second named author with Lu, Sun and Zuo obtained the criterion for Picard hyperbolicity in terms of a \emph{Finsler metric} for $T_Y(-\log D)$ with a \emph{stronger} curvature property than the negativity of holomorphic sectional curvature.  Indeed, we are not sure that our new criterion for Picard hyperbolicity \cref{thm:criterion} still holds if $\omega$ is only  assumed to a $(1,1)$-\emph{hermitian form}; its global positivity is crucial in the proof.    

	Theorem~\ref{strong} is a generalization of several earlier work, stemming from the seminal paper of Mumford \cite{Mum77}, who proved that given an arithmetic lattice on a bounded symmetric domain, then all compactifications of quotients by sublattices of sufficiently high index are of general type. It was then shown later by the work of Brunebarbe \cite{Bru16a}, Rousseau \cite{Rou16}, the first named author \cite{Cad18}   that these compactification satisfy very strong algebraic or hyperbolicity properties. These results were later extended to varieties supporting variations of Hodge structures in \cite[Theorem B]{Den21} and \cite{BB20,Bru20}; Theorem~\ref{strong} can be seen as a generalization of these last results for varieties supporting nilpotent harmonic bundles. 
    	
After the completion of this paper, Yohan Brunebarbe informed us that   they were able to prove that the quasi-projective manifold $U$ in \cref{main} is of   log general type  in an ongoing work with Jeremy Daniel  towards the Shafarevich conjecture for open varieties.

 \subsection{Acknowledgment}The second named author would like to thank Professors Takuro Mochizuki and Carlos Simpson, and Jeremy Daniel for very helpful discussions on  the proof of \cref{prop:extension}.  Both the authors would like to thank Yohan Brunebarbe and Damian Brotbek  for their remarks and interest on this work.

     \section*{Notations}
\begin{itemize}[leftmargin=0.3cm]
	\item A complex manifold is called \emph{quasi-compact K\"ahler} if it is the Zariski dense open set of a compact K\"ahler manifold. 
	\item For two real functions $f$ and $g$ on a complex manifold, we write $f\gtrsim g$ or $g\lesssim g$ if $f\geq \ep g$ for some constant $\ep >0$. 
	\item A \emph{compact K\"ahler log pair} (resp. \emph{projective log pair}) $(Y,D)$ consists of a compact K\"ahler (resp. \emph{projective}) manifold  $Y$ and a simple normal crossing divisor $D$ on $Y$. 
	\item A map $\mu:(X,\tilde{D})\to (Y,D)$ between compact K\"ahler log pairs is called a \emph{log morphism} if $\mu:X\to Y$ is a holomorphic map with $\tilde{D}\subset \mu^{-1}(D)$. 
	\item The unit disk is denoted by $\Delta$ and $\Delta^*$ denotes the punctured unit disk. 
	\item For any closed positive $(1,1)$ current $T$ on a compact K\"ahler manifold, we write $\{T\}$ for its cohomology class. For two cohomology $(1,1)$ class $\alpha$ and $\beta$, we write $\alpha\geq \beta$ if $\alpha-\beta$ is pseudo effective. 
	\item  For a line bundle $L$ with a singular hermitian metric $h$, its curvature current is denoted by $\Theta_h(L):=-\hess \log h$, where $\hess:=\frac{i}{2\pi}\d\bar{\d}$. 
\end{itemize} 
\section{Technical preliminaries} 
In this section we first recall the definitions of nilpotent   harmonic bundles and algebraic hyperbolicity. We then state and prove some results on Picard  hyperbolicity and closed positive $(1,1)$ currents, which will be used throughout this paper.
 \subsection{Harmonic bundles} \label{sec:VHS}  
  	\begin{dfn}[Higgs bundle]
 		A Higgs bundle on a complex manifold $Y$ is a pair $(E,\theta)$ consisting of a holomorphic vector bundle $E$ on $Y$ and an $\cO_Y$-linear map
 		$$
 		\theta:E\to E\otimes \Omega_Y^1
 		$$
 		so that $\theta\wedge\theta=0$. The map $\theta$ is called the \emph{Higgs field}.
 	\end{dfn} 
 \begin{dfn}[Harmonic bundle]
 	A \emph{harmonic bundle}  $(E,\theta,h)$ consists of a  Higgs bundle $(E,\theta)$ and  a hermitian metric $h$ for $E$ so that the connection
 	$$
 	D:=D_h+\theta+\theta_h^*
 	$$
 	is flat. Here $D_h$ is the Chern connection of $(E,h)$, and $\theta_h^*\in C^{\infty}(Y,\End(E)\otimes \Omega^{0,1}_Y)$ is the adjoint of $\theta$ with respect to $h$.
 \end{dfn}

\begin{dfn}[Nilpotent harmonic bundle]\label{def:nilpotent}
A harmonic bundle	  $\shb$ is called \emph{nilpotent}  if the characteristic polynomial $\det(t-\theta)= t^{\rank E}$.  
\end{dfn}  
Note that a complex polarized variation of Hodge structures induces a  nilpotent harmonic bundle.  

\subsection{Algebraic hyperbolicity}
Algebraic hyperbolicity for a compact complex manifold was introduced by Demailly in \cite[Definition 2.2]{Dem97}. He proved in \cite[Theorem 2.1]{Dem97} that a compact complex manifold is algebraically hyperbolic if it is  Kobayashi hyperbolic.  The notion of algebraic hyperbolicity was generalized to log pairs by Chen \cite{Che04}. 
\begin{dfn}[Pseudo-algebraic hyperbolicity]\label{def:DC}
	Let $(Y,\omega_Y)$ be a compact K\"ahler manifold and let $D$ be a simple normal crossing divisor on $Y$.  For  any reduced irreducible curve $C\subset Y$ such that $C\not\subset D$,  we denote by $i_Y(C,D)$ the number of distinct
	points in the set $\nu^{-1}(D)$, where $\nu:\tilde{C}\to C$
	is  the normalization of $C$. Assume that $Z\subsetneq Y$ is Zariski closed proper subset of $Y$. If   there is $\ep>0$ so that 
	$$
	2g(\tilde{C})-2+i_{Y}(C,D)\geq \ep{\rm deg}_{\omega_Y}C:=\ep \int_C\omega_Y
	$$
	for all reduced irreducible curve $C\subset Y$ not contained in $Z\cup D$,   $(Y,D)$ is called \emph{algebraically hyperbolic modulo $Z$}, and \emph{pseudo-algebraically hyperbolic}. If $Z=\varnothing$, $(Y,D)$ is called \emph{algebraically hyperbolic}.
\end{dfn} 
Note that the number $2g(\tilde{C})-2+i_{Y}(C,D)$ depends only on the intersection of \(C\) with the complement $Y-D$.
Hence the above notion of hyperbolicity also makes sense for quasi-projective manifolds: we say that a quasi-projective manifold $U$ is algebraically hyperbolic if it has a log compactification $(Y,D)$  which is  algebraically hyperbolic. 

However, it is unclear to us if Demailly's theorem extends to the non-compact case, i.e. if Kobayashi hyperbolicity, or Picard hyperbolicity, of $Y-D$ will imply the algebraic hyperbolicity of $(Y,D)$. Note that Pacienza-Rousseau \cite{PR07} have proved that if $Y-D$ is hyperbolically embedded into $Y$, the log pair $(Y,D)$ (and thus $Y-D$) is algebraically hyperbolic.

 \subsection{Picard hyperbolicity}
 Let us first recall the definition of Picard hyperbolicity introduced in \cite{Den21}. We start with the following definition of admissible coordinate systems which will be used frequently. 
 \begin{dfn}(Admissible coordinates)\label{def:admissible}  Let $Y$ be an $n$-dimensional complex manifold, and let $D$ be
 	a simple normal crossing divisor. Let $p$ be a point of $Y$, and assume that $\{D_{j}\}_{ j=1,\ldots,\ell}$ 
 	are the components of $D$ containing $p$. An \emph{admissible coordinate system} around $p$
 	is a tuple $(\Omega;z_1,\ldots,z_n;\varphi)$ (or simply  $(\Omega;z_1,\ldots,z_n)$ if no confusion arises) where
 	\begin{itemize}
 		\item $\Omega$ is an open subset of $Y$ containing $p$.
		\item \(\varphi\) is a holomorphic isomorphism $\varphi:\Omega\to \Delta^n$ so that  $\varphi(D_j)=(z_j=0)$ for any
 		$j=1,\ldots,\ell$.
 	\end{itemize}  
 \end{dfn}

 	\begin{dfn}[pseudo-Picard hyperbolicity]\label{def:Picard}
 	Let $U$ be a  quasi-compact K\"ahler manifold, and let $Y$ be a smooth K\"ahler  compactification.  $U$ is called \emph{pseudo-Picard hyperbolic} if there is a   Zariski closed proper subset $Z\subsetneq U$ so that any holomorphic map  $f:\Delta^*\to U$ with $f(\Delta^*)\not\subset Z$ extends to a holomorphic map $\bar{f}:\Delta\to Y$. We also say that $U$ is \emph{Picard hyperbolic modulo $Z$}. If $Z=\varnothing$, $U$ is simply called \emph{Picard hyperbolic}.
 \end{dfn}

In \cite[Lemma 4.3]{Den21} we proved that  \cref{def:Picard} does not depend on the compactification of $U$ when $Z=\varnothing$. The proof of this statement is based on the deep extension theorem of meromorphic maps by Siu \cite{Siu75}, and is also valid when $Z$ is not empty. Let us now give some interesting properties of pseudo-Picard hyperbolic manifolds. 

\begin{proposition} \label{extension theorem}
	Let $U$, $Y$ be as in \cref{def:Picard}, and assume that \(U\) is pseudo-Picard hyperbolic. Let $X$ be a compact complex manifold and let $D$ be a simple normal crossing divisor on $X$. If there is a holomorphic map $f:X-D\to U$ which is \emph{dominant}, then $f$ extends to a meromorphic map $X\dashrightarrow Y$. In particular, 
\begin{thmlist}
	\item\label{unique} any compact complex  manifold  containing $U$ as a Zariski dense open set  is bimeromorphic to $Y$.
	\item   the pseudo-Picard hyperbolicity of $U$ in \cref{def:Picard} does not depend on the compactification $Y$.
\end{thmlist}
\end{proposition}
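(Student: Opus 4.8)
The statement to prove is \cref{extension theorem}: a dominant holomorphic map $f \colon X - D \to U$ from the complement of a simple normal crossing divisor in a compact complex manifold extends to a meromorphic map $X \dashrightarrow Y$, with the two corollaries following. The plan is to reduce the extension problem to the one-variable situation that pseudo-Picard hyperbolicity directly controls, and then invoke Siu's theorem on extension of meromorphic maps. First I would let $Z \subsetneq U$ be the Zariski closed proper subset such that $U$ is Picard hyperbolic modulo $Z$. Since $f$ is dominant, $f^{-1}(Z)$ is a proper analytic subset of $X - D$, so a generic point of any local branch of $D$ (more precisely, a generic point of $X$ near $D$) is mapped outside $Z$ by $f$; in particular the punctured-disk transversals to $D$ at generic points of $D$ avoid $Z$.

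The core step is local around a generic point $p$ of $D$. Choose an admissible coordinate system $(\Omega; z_1, \dots, z_n)$ in the sense of \cref{def:admissible} around such a $p$, with $D \cap \Omega = (z_1 \cdots z_k = 0)$; restricting $f$ to the family of punctured disks $\Delta^* \ni t \mapsto (t, z_2, \dots, z_n)$ (for fixed generic $(z_2, \dots, z_n)$, and similarly for the other branches), pseudo-Picard hyperbolicity of $U$ gives that each such $f|_{\Delta^*}$ extends holomorphically across $0$ into $Y$. Thus $f$ extends as a holomorphic map on a dense open subset of $\Omega \setminus (X - D)$, hence meromorphically in codimension one along the generic locus of $D$; by Levi-type / Remmert–Stein extension (or directly because a holomorphic map extending across a generic slice of each boundary component bounds the map locally and the indeterminacy locus has codimension $\geq 2$), $f$ extends to a meromorphic map from $X$ minus an analytic subset of codimension $\geq 2$ into $Y$. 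Now Siu's extension theorem \cite{Siu75} for meromorphic maps into compact (K\"ahler, or more generally compact complex) target manifolds — applied exactly as in \cite[Lemma 4.3]{Den21} — upgrades this to a genuine meromorphic map $X \dashrightarrow Y$ defined on all of $X$.

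Given the extension statement, the two corollaries are formal. For \cref{unique}, if $Y'$ is another compact complex manifold containing $U$ as a Zariski dense open set, apply the extension result to the inclusion $U = Y' - D' \hookrightarrow U \subset Y$ (which is dominant) to get a meromorphic map $Y' \dashrightarrow Y$, and symmetrically $Y \dashrightarrow Y'$; these are mutually inverse on the common dense open set $U$, hence $Y$ and $Y'$ are bimeromorphic. For the second item, the independence of the notion of pseudo-Picard hyperbolicity from the chosen compactification $Y$ follows: given any holomorphic $g \colon \Delta^* \to U$ with image not contained in $Z$, extendability of $g$ to $Y$ and to $Y'$ are equivalent because a bimeromorphic map between compact complex manifolds that restricts to a biholomorphism on $U$ carries the extension across $0$ from one compactification to the other (the extended map $\Delta \to Y$ composed with the bimeromorphic identification lands in $Y'$ off a proper subset of $\Delta$, hence everywhere after possibly shrinking, using that $\Delta$ is a curve and indeterminacy loci are finite).

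The main obstacle I anticipate is the codimension-one-to-everywhere passage: promoting "holomorphic extension across generic transversal punctured disks" to "meromorphic extension across all of $D$, including its singular strata and the non-generic locus where $f^{-1}(Z)$ meets $D$." The clean way is to first establish meromorphic extension across the generic (smooth) part of $D$ where $f$ avoids $Z$ — here the slice-by-slice argument plus a Hartogs/Levi extension for the resulting map suffices — obtaining a meromorphic map on $X$ minus an analytic set $A$ of codimension $\geq 2$; then the indeterminacy-plus-boundary set $A$ has codimension $\geq 2$ and one appeals to Siu's theorem, whose hypotheses are designed precisely to handle extension of meromorphic maps across such sets into compact manifolds. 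This is exactly the mechanism used in \cite{Den21}, so the proof here is a matter of carefully recording that the argument only used $Z = \varnothing$ for cosmetic reasons and goes through verbatim with a proper $Z$, because $f$ dominant forces $f^{-1}(Z)$ to be a proper analytic subset and thus to miss the generic transversals.
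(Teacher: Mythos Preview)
Your strategy matches the paper's: reduce via Siu's codimension-$\geq 2$ extension theorem to smooth points of $D$, then at such a point use admissible coordinates $\Delta^*\times\Delta^{n-1}$, extend along generic transversal punctured disks by pseudo-Picard hyperbolicity, and glue. Two points need correction.

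First, the passage from ``holomorphic extension along each slice $\Delta^*\times\{z\}$ for generic $z$'' to ``meromorphic extension on $\Delta^n$'' is \emph{not} a Hartogs/Levi/Remmert--Stein argument: the extended values $\bar f(0,z)$ are not a priori holomorphic, or even continuous, in $z$, so you do not yet have a map defined on any open set to which such extension theorems apply. The paper closes exactly this gap by citing \cite[p.~442, ($\ast$)]{Siu75}, a result stating that if $f:\Delta^*\times\Delta^{n-1}\to Y$ with $Y$ compact K\"ahler extends holomorphically across $0$ for $z$ in a dense subset of $\Delta^{n-1}$, then $f$ extends meromorphically to $\Delta^n$. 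This is the nontrivial input you correctly flagged as the main obstacle; naming it resolves it.

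Second, in your proof of \cref{unique}, the step ``symmetrically $Y\dashrightarrow Y'$'' would require $Y'$ to be K\"ahler (Siu's theorem needs a K\"ahler target---your parenthetical ``or more generally compact complex'' is false), but $Y'$ is only assumed compact complex. The paper avoids this: after using Hironaka to make $Y'-U$ simple normal crossing, the single meromorphic map $Y'\dashrightarrow Y$ extending $\mathrm{id}_U$ is automatically bimeromorphic, since it restricts to a biholomorphism on the Zariski-dense open set $U$. No reverse direction is needed.
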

\begin{proof} 
Write $V:=X-D$.  To prove that $f$ extends to a meromorphic map $X\dashrightarrow Y$, it suffices to check that locally around $D$. By \cite[Theorem 1]{Siu75}, any meromorphic map from a Zariski open set $W^\circ$ of a complex manifold $W$ to a compact K\"ahler manifold $Y$ extends to a meromorphic map from $W$ to $Y$ provided that the codimension of $W-W^\circ$ is at least 2.  It then suffices to consider the extensibility of $f$ around smooth points on $D$. Pick any such point $x\in D$ and choose admissible coordinates $(\Omega;x_1,\ldots,x_n)$ around $x$ so that $\Omega\cap D=(x_1=0)$. The theorem follows if we can prove that $f:\Delta^*\times \Delta^{n-1}\to U$ extends to a meromorphic map $\Delta^{n+1}\dashrightarrow Y$. Let $Z$ be the Zariski closed proper subset of $Y$ as in \cref{def:Picard}. Since $f$ is assumed to be dominant, there is thus a dense open set $W\subset \Delta^{n-1}$ so that for any $z\in W$, $f(\Delta^*\times \{z\})\not\subset Z$. Since $U$ is assumed to be Picard hyperbolic modulo $Z$, $f:\Delta^*\times \{z\}\to U$ extends to a holomorphic map $\Delta\times   \{z\}\to Y$ for $z\in W$.   It then follows from  \cite[p.442,  ($\ast$)]{Siu75} that $f$ extends to a meromorphic map  $\overline{f}:\Delta^{n}\dashrightarrow Y$. We thus can conclude that $f:X-D\to U$ extends to a meromorphic map $X\dashrightarrow Y$. 

Let $Y'$ be another compact complex manifold containing $U$ as a Zariski dense open set. We can  apply the Hironaka theorem on resolution of singularities  to assume that $Y'-U$ is a simple normal crossing divisor. By the above result, the identity map of $U$ extends to a meromorphic map $Y'\dashrightarrow Y$ which is thus bimeromorphic. The second statement follows, which also implies the last claim.
\end{proof}  
 \subsection{Closed positive $(1,1)$-currents}
 In this subsection we first  recall some results concerning closed positive $(1,1)$-currents (see \cite{Dem12}). We then prove  \cref{lem:crucial} which will be crucial in the proofs of our main results.
  \begin{dfn}[Pseudo-K\"ahler metric]
 	Let $X$ be a complex manifold. A $(1,1)$-form $\omega$ on $X$ is called a \emph{pseudo-K\"ahler metric} (or \emph{pseudo-K\"ahler form}) if $d\omega=0$, $\omega$ is semipositive, and strictly positive on a Zariski open set of $X$.
 \end{dfn}
 \begin{dfn}\label{def:bignef}
 	Let $(X,\omega)$ be a compact K\"ahler manifold. Let $\alpha\in H^{1,1}(X,\bR)$ be a cohomology $(1,1)$-class of $X$.   The class  $\alpha$ is  \emph{nef}  if for any $\ep>0$ there is a smooth closed $(1,1)$-form   $\eta_\ep\in \alpha$ so that $\eta_\ep\geq -\ep \omega$.   The class  $\alpha$ is  \emph{pseudo-effective}  if there is a closed positive $(1,1)$-current $T\in \alpha$. $\alpha$ is called \emph{big} if there is a closed positive $(1,1)$-current $T\in \alpha$ so that  $T\geq \delta\omega$ for some $\delta>0$. Such  a current $T$ will be called a \emph{K\"ahler current}.
 \end{dfn}

For two cohomology $(1,1)$ classes $\alpha$ and $\beta$, we write $\alpha\geq \beta$ if $\alpha-\beta$ is a pseudo-effective class.

Boucksom's criterion \cite{Bou02} asserts that a class $\alpha$ is big if there is a closed positive current $T\in \alpha$ so that $\int_{X}(T_{\rm ac})^{\rm dim X}>0$, where \(T_{\rm ac}\) denotes the  absolutely continuous part of \(T\) with respect to any smooth measure on \(X\). 

The \emph{non-K\"ahler locus} $E_{nK}(\alpha)$ of a big class $\alpha$  introduced  by Boucksom \cite{Bou04}  measures how far $\alpha$ is from being K\"ahler.  It is the transcendental generalization of the \emph{augmented base locus} for big line bundles.
\begin{dfn}[non-K\"ahler locus]\label{def:nK}
	Let $X$ be a compact K\"ahler manifold and let $\alpha$ be a big class on $X$. The \emph{non-K\"ahler locus} $E_{nK}(\alpha)$ of   $\alpha$ is 
	\begin{align*}
E_{nK}(\alpha):=\bigcap_{T\in \alpha}{\rm Sing}(T),
	\end{align*}
	where the intersection ranges over all K\"ahler currents $T\in \alpha$, and   ${\rm Sing}(T)$ is the complement of the set of points $x\in X$ such that $T$ is smooth around $x$.
\end{dfn}
Let us quote the following result by Boucksom \cite[Theorem 3.17]{Bou04}.
\begin{thm}[Boucksom]\label{boucksom}
Let $\alpha$ be a big class on a compact K\"ahler manifold. Then its {non-K\"ahler locus} $E_{nK}(\alpha)$ is a proper analytic subvariety. Moreover, there is a K\"ahler current $T\in \alpha$ with analytic singularities which is smooth outside $E_{nK}(\alpha)$.\qed
\end{thm}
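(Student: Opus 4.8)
The plan is to deduce everything from Demailly's regularization theorem for closed positive $(1,1)$-currents, together with the Noetherian property of the local rings $\mathcal{O}_{X,x}$. Fix a K\"ahler form $\omega$ on $X$. Since $\alpha$ is big, I would start from a K\"ahler current $T_0\in\alpha$ with $T_0\ge 2\delta_0\omega$. Demailly's theorem furnishes, for any closed positive current $S$ with $S\ge\gamma$ for a smooth form $\gamma$, a sequence $S_m\in\{S\}$ of currents with analytic singularities such that $S_m\to S$, $S_m\ge\gamma-\ep_m\omega$ with $\ep_m\downarrow 0$, and $\nu(S_m,x)\le\nu(S,x)$ for every $x\in X$. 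Applied to $T_0$, this gives for $m$ large a K\"ahler current $T\in\alpha$ with analytic singularities and $T\ge\delta_0\omega$; locally $T=\hess\bigl(\tfrac1{2m}\log\sum_j|g_{m,j}|^2\bigr)+(\text{smooth})$, so ${\rm Sing}(T)$ is the common zero set of the $g_{m,j}$, a proper analytic subset of $X$. Since $E_{nK}(\alpha)\subseteq{\rm Sing}(T)$ by definition, this places $E_{nK}(\alpha)$ inside a proper analytic subset, and it remains to show that $E_{nK}(\alpha)$ is itself analytic and is realized as the singular locus of one such current.

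Let $\mathcal{T}$ be the set of K\"ahler currents in $\alpha$ with analytic singularities, which is nonempty by the above. I would first check that $E_{nK}(\alpha)=\bigcap_{T\in\mathcal{T}}{\rm Sing}(T)$. One inclusion is immediate since $\mathcal{T}$ is a subfamily of all K\"ahler currents; for the reverse I argue contrapositively: if $x\notin E_{nK}(\alpha)$ there is a K\"ahler current $S\in\alpha$ smooth near $x$, so $\nu(S,\cdot)\equiv0$ near $x$; the Lelong-number bound then forces the Demailly approximants $S_m\in\mathcal{T}$ of $S$ to have $\nu(S_m,\cdot)\equiv0$ near $x$, and since their local potentials are $\tfrac1{2m}\log\sum_j|g_{m,j}|^2+(\text{smooth})$, vanishing Lelong number forces some $g_{m,j}$ to be non-vanishing near $x$, so $S_m$ is smooth there and $x\notin{\rm Sing}(S_m)$. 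Granting this identity, $E_{nK}(\alpha)$ is an arbitrary intersection of closed analytic subsets, hence itself closed analytic (locally it is cut out by the sum of the defining ideals, which is finitely generated over the Noetherian ring $\mathcal{O}_{X,x}$); by the same local finiteness together with the compactness of $X$, there are finitely many $T_1,\dots,T_m\in\mathcal{T}$ with $E_{nK}(\alpha)={\rm Sing}(T_1)\cap\dots\cap{\rm Sing}(T_m)$. Combined with the first paragraph, $E_{nK}(\alpha)$ is a proper analytic subset.

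For the final assertion I would reduce, by induction on $m$, to the following: given $S_1,S_2\in\mathcal{T}$, there is $T\in\mathcal{T}$ with ${\rm Sing}(T)={\rm Sing}(S_1)\cap{\rm Sing}(S_2)$. Writing $S_i=\theta+\hess\psi_i$ for a fixed smooth $\theta\in\alpha$, with $\psi_i$ quasi-psh having analytic singularities and $\theta+\hess\psi_i\ge\delta_i\omega$, I take the regularized maximum $\psi:=\max_\eta(\psi_1,\psi_2)$ for small $\eta$ and set $T:=\theta+\hess\psi$. Then $T\in\alpha$ and $T\ge\min(\delta_1,\delta_2)\omega$, so $T$ is a K\"ahler current; near a point of ${\rm Sing}(S_1)\cap{\rm Sing}(S_2)$ the potential $\psi$ agrees up to a bounded term with $\max\bigl(c_1\log\sum_j|g_{1,j}|^2,\,c_2\log\sum_j|g_{2,j}|^2\bigr)$, and after clearing denominators in the rational coefficients $c_i$ coming from Demailly's construction this is of the form $c\log\sum_k|g_k|^2+(\text{bounded})$ whose polar set is exactly ${\rm Sing}(S_1)\cap{\rm Sing}(S_2)$. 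Hence $T\in\mathcal{T}$ with the required singular locus, and iterating over $T_1,\dots,T_m$ yields a K\"ahler current in $\alpha$ with analytic singularities that is smooth outside $E_{nK}(\alpha)$.

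The step I expect to be the main obstacle is this last one: arranging that a combination of finitely many K\"ahler currents with analytic singularities again has analytic singularities, and precisely along the intersection of their polar sets. The regularized maximum is the natural device, but it requires keeping careful track of the exact local shape of the potentials — in particular working throughout with Demailly's approximants, so that the singularity coefficients are rational and the clearing-of-denominators step is legitimate. The Lelong-number bookkeeping behind the identity $E_{nK}(\alpha)=\bigcap_{\mathcal{T}}{\rm Sing}(T)$ is the other delicate point; both ultimately rest on the quantitative form of Demailly's regularization theorem, which is the essential input of the whole argument.
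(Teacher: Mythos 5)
The paper does not prove this statement: it is quoted verbatim from Boucksom \cite[Theorem 3.17]{Bou04}. Your argument is correct and is essentially Boucksom's original proof — Demailly regularization with the Lelong-number bound to reduce $E_{nK}(\alpha)$ to an intersection of singular loci of K\"ahler currents with analytic singularities, the strong Noetherian property plus compactness to make that intersection finite, and the regularized maximum of potentials with commensurable (rational) singularity coefficients to realize the finite intersection as the singular locus of a single K\"ahler current.
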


If the class $\alpha$ is big and nef, in \cite{CT15}  Collins-Tosatti proved the following theorem on the characterization of  its non-K\"ahler locus  $E_{nK}(\alpha)$. It is a transcendental generalization of the Nakamaye  theorem.
\begin{thm}[Collins-Tosatti]\label{eq:CT}
Let $(X,\omega)$ be a compact K\"ahler manifold. Let $\alpha$ be a big and nef $(1,1)$ class on $X$. Then
\begin{align} 
	E_{nK}(\alpha)={\rm Null}(\alpha):=\bigcup_{\int_{Z}\alpha^{\dim Z}=0}Z
\end{align}
where  the union is taken over all positive dimensional irreducible analytic subvarieties $Z$ in $Y$. \qed
\end{thm}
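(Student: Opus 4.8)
The plan is to establish the two inclusions $\mathrm{Null}(\alpha)\subseteq E_{nK}(\alpha)$ and $E_{nK}(\alpha)\subseteq\mathrm{Null}(\alpha)$ separately; the first is elementary, whereas the second is a transcendental incarnation of Nakamaye's theorem and carries all the real content. For the first inclusion, let $V\subseteq X$ be a positive-dimensional irreducible analytic subvariety with $\int_{V}\alpha^{\dim V}=0$, and suppose for contradiction that $V\not\subseteq E_{nK}(\alpha)$. By \cref{boucksom} there is a K\"ahler current $T\in\alpha$ with analytic singularities that is smooth away from $E_{nK}(\alpha)$, hence smooth (and, being a K\"ahler current, $\geq\varepsilon\,\omega$ for some $\varepsilon>0$) near a generic point of $V$. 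Resolving the singularities of $T$ by a sequence of blow-ups $\pi\colon\hat X\to X$ with centres contained in $\mathrm{Sing}(T)\subseteq E_{nK}(\alpha)$, we obtain $\pi^{*}T=\theta+[E]$ with $\theta\geq 0$ a smooth $(1,1)$-form and $E\geq 0$ an effective $\mathbb{R}$-divisor; over a generic point of $V$ the map $\pi$ is an isomorphism, $E$ vanishes, and $\theta=\pi^{*}T>0$. Writing $\hat V$ for a resolution of the strict transform of $V$, the class $[E]=\pi^{*}\alpha-\{\theta\}$ is pseudo-effective while $\pi^{*}\alpha$ and $\{\theta\}$ are both nef, so
\[
0=\int_{V}\alpha^{\dim V}=\int_{\hat V}(\pi^{*}\alpha)^{\dim V}=\int_{\hat V}(\{\theta\}+[E])^{\dim V}\geq\int_{\hat V}\theta^{\dim V}>0 ,
\]
the penultimate inequality because every cross term $\int_{\hat V}[E]\cdot(\pi^{*}\alpha)^{j}\cdot\{\theta\}^{\dim V-1-j}$ is a non-negative pairing on $\hat V$ of a pseudo-effective class with a product of nef classes, and the final strict inequality because $\theta|_{\hat V}$ is smooth, semi-positive, and strictly positive on a non-empty Zariski open subset of $\hat V$. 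This contradiction proves $\mathrm{Null}(\alpha)\subseteq E_{nK}(\alpha)$.

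For the second inclusion, note first that $E_{nK}(\alpha)$ is a proper analytic subvariety by \cref{boucksom}, so it suffices to prove $\int_{V}\alpha^{\dim V}=0$ for every irreducible component $V$ of $E_{nK}(\alpha)$ (whence $V\subseteq\mathrm{Null}(\alpha)$); the first inclusion already rules out isolated points, so $k:=\dim V\geq 1$. Arguing by contradiction, assume $\int_{V}\alpha^{k}>0$; I would then construct a K\"ahler current $S\in\alpha$ on $X$ that is smooth at a generic point of $V$, which contradicts $V\subseteq E_{nK}(\alpha)=\bigcap_{T}\mathrm{Sing}(T)$. I would carry this out by induction on $\dim X$: fix a K\"ahler current with analytic singularities smooth off $E_{nK}(\alpha)$ together with a log resolution $\pi\colon\hat X\to X$ for which $\pi^{*}\alpha=\{\theta_{0}\}+[E_{0}]$, with $\theta_{0}$ a K\"ahler form off the exceptional and singular loci and $E_{0}\geq 0$ a simple normal crossing divisor; then restrict to $\hat V$ (a resolution of the strict transform of $V$), where $(\pi^{*}\alpha)|_{\hat V}$ is nef and big since $\int_{\hat V}(\pi^{*}\alpha)^{k}=\int_{V}\alpha^{k}>0$, and invoke the inductive hypothesis to conclude that $E_{nK}\bigl((\pi^{*}\alpha)|_{\hat V}\bigr)=\mathrm{Null}\bigl((\pi^{*}\alpha)|_{\hat V}\bigr)$ is a proper subvariety of $\hat V$, so $(\pi^{*}\alpha)|_{\hat V}$ carries a K\"ahler current smooth at a generic point $v$ of $\hat V$. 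It remains to subtract a small multiple $\delta\sum_{E_{i}\ni v}[E_{i}]$ of the boundary components through $v$, to verify that the resulting class stays big with a K\"ahler-current representative whose singular locus no longer meets a generic point of $\hat V$ (a mass-concentration / Zariski-decomposition argument in the spirit of Demailly--P\u{a}un), and finally to push this current down to $X$ and add back a small smooth correction so as to land in the class $\alpha$.

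The main obstacle is precisely this last redistribution step: one has to lower the Lelong numbers of the current along $\hat V$ while simultaneously preserving strict positivity (bigness) of the class, and these two demands genuinely compete. This is where the nefness of $\alpha$ is indispensable --- it furnishes the identity $\mathrm{vol}(\beta|_{W})=\int_{W}\beta^{\dim W}$ for nef classes $\beta$, which controls exactly how much of the exceptional divisor can be removed without destroying bigness on $\hat V$ and thereby lets the induction close. Everything else --- Demailly regularization to reduce to analytic singularities, passage to a log resolution, and the numerical bookkeeping on $\hat V$ --- is routine once this input is in place.
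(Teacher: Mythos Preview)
The paper does not prove this statement: it is quoted as the main theorem of \cite{CT15} and is marked with a terminal \qed, so there is no ``paper's own proof'' to compare against. Your write-up therefore goes well beyond what the paper does, and the appropriate response in the paper's context is simply to cite Collins--Tosatti.

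That said, a word on your sketch. The easy inclusion $\mathrm{Null}(\alpha)\subset E_{nK}(\alpha)$ is fine in spirit but slightly overengineered: once you have a K\"ahler current $T\in\alpha$ smooth near a generic point of $V$, pull it back to a resolution $\nu:\hat V\to V$; then $\nu^{*}\alpha$ is nef and contains the K\"ahler current $\nu^{*}T$, hence is big, so $\int_{V}\alpha^{\dim V}=\int_{\hat V}(\nu^{*}\alpha)^{\dim V}>0$. There is no need to resolve the singularities of $T$ on the ambient space. (In your version, after writing $\pi^{*}T=\theta+[E]$ you assert $\theta\geq 0$; this is not automatic---resolving analytic singularities only gives $\theta$ smooth, not semi-positive---so your expansion of $(\{\theta\}+[E])^{\dim V}$ into nonnegative cross terms needs an extra argument or should be replaced by the simpler route above.)

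For the hard inclusion your outline is in the right neighbourhood of the Collins--Tosatti argument---induction on dimension, restriction to components of $E_{nK}(\alpha)$, and a mass-concentration/extension step to push a K\"ahler current from $\hat V$ back to $\hat X$---and you are right that nefness is what makes the numerics close. But the ``redistribution'' paragraph is where essentially all of \cite{CT15} lives: one needs a genuine extension theorem for K\"ahler currents from a submanifold (their Theorem~3.2, itself relying on Ohsawa--Takegoshi type input and a delicate gluing), not just Demailly regularization and bookkeeping. As written, that step is a description of what must be done rather than an argument that it can be; if you intend to include a proof rather than a citation, that is the part requiring substantial expansion.
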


Let us recall  the following extension theorem of Skoda (see \emph{e.g.} \cite[ (2.4) Theorem]{Dembook}) which will be used frequently.

\begin{thm}[Skoda]\label{lem:skoda}
Let $X$ be a  (not necessarily compact) complex manifold and let $A$ be a closed analytic subset of $X$. Assume that $T$ is a closed positive $(p,p)$-current defined on $X-A$  so that   $T$ has locally \emph{finite mass} in a neighborhood   of any point of $A$. Then the trivial extension of $T$, denoted by $\widetilde{T}$, is also a closed positive $(p,p)$-current on $X$.    \qed
\end{thm}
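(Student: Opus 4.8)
The plan is to establish the three assertions --- that $\widetilde T$ is a current of order zero, that $\widetilde T\ge 0$, and that $d\widetilde T=0$ --- in that order, only the last being serious. Since all three are local on $X$, one reduces to the case where $X$ is a ball in $\mathbb{C}^{n}$ and $A$ an analytic subset. The finite-mass hypothesis allows the coefficient (Radon) measures of $T$ to be extended by zero across $A$; this defines $\widetilde T$ and shows it has order $0$. Positivity is a pointwise algebraic condition on the coefficient measures of an order-zero current, and it is untouched by trivial extension, so $\widetilde T\ge 0$. For closedness, write $d\widetilde T=\partial\widetilde T+\bar\partial\widetilde T$; since $dT=0$ on $X\setminus A$ already forces $\partial T=\bar\partial T=0$ there for bidegree reasons, it suffices to prove $\partial\widetilde T=0$, the case of $\bar\partial$ following by conjugation. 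Note also that $\operatorname{supp}\partial\widetilde T\subseteq A$.

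Next I would reduce to the case where $A$ is a closed complex submanifold of $X$, by induction on $\dim_{\mathbb C}A$. Stratifying $A=A_{\mathrm{reg}}\sqcup A_{\mathrm{sing}}$ with $\dim A_{\mathrm{sing}}<\dim A$, the smooth case applied on the open set $X\setminus A_{\mathrm{sing}}$ extends $T$ trivially across $A_{\mathrm{reg}}$ to a closed positive current, which still has locally finite mass near $A_{\mathrm{sing}}$ (trivial extension adds no mass); the inductive hypothesis then extends it trivially across $A_{\mathrm{sing}}$, and the composite is $\widetilde T$. So assume $A=\{z'=0\}$ in suitable coordinates, with $z'=(z_{1},\dots,z_{k})$. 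The main device is a family of cutoff functions $\theta_{\epsilon}=\chi_{\epsilon}(\log|z'|^{2})$, where $\chi_{\epsilon}\colon\mathbb R\to[0,1]$ is smooth, vanishes on $(-\infty,2\log\epsilon]$, equals $1$ on $[\log\epsilon,\infty)$, and satisfies $|\chi_{\epsilon}'|\lesssim 1/\log(1/\epsilon)$; thus $\theta_{\epsilon}$ vanishes near $A$, increases to $1$ on $X\setminus A$, and for any smooth test form $\varphi$ of bidegree $(n-p-1,n-p)$,
\begin{equation*}
\langle\partial\widetilde T,\varphi\rangle=\lim_{\epsilon\to0}\langle\theta_{\epsilon}\widetilde T,\partial\varphi\rangle=-\lim_{\epsilon\to0}\langle\partial\theta_{\epsilon}\wedge\widetilde T,\varphi\rangle,
\end{equation*}
because $\theta_{\epsilon}\,\partial\widetilde T=\theta_{\epsilon}\,\partial T=0$ (we have $\partial T=0$ on $X\setminus A$, and $\theta_{\epsilon}$ is supported off $A$).

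Everything therefore comes down to the mass estimate $\partial\theta_{\epsilon}\wedge\widetilde T\to0$ weakly, which is the genuinely technical point --- the El Mir lemma. The mechanism is that the cutoff is \emph{quadratically} logarithmic: $i\,\partial\theta_{\epsilon}\wedge\bar\partial\theta_{\epsilon}=\chi_{\epsilon}'(\log|z'|^{2})^{2}\,i\,\partial\log|z'|^{2}\wedge\bar\partial\log|z'|^{2}$ is bounded by $\tfrac{C}{(\log(1/\epsilon))^{2}}\,i\,\partial\log|z'|^{2}\wedge\bar\partial\log|z'|^{2}$ on its support, the tube $\{\epsilon^{2}\le|z'|^{2}\le\epsilon\}$; one checks that $\int i\,\partial\log|z'|^{2}\wedge\bar\partial\log|z'|^{2}\wedge T\wedge\beta^{\,n-p-1}$ over this tube (with $\beta$ the standard Kähler form) is only $O(\log(1/\epsilon))$ --- the tube has merely logarithmic radial extent in the $\log|z'|^{2}$ variable and the trace measure of $T$ is finite --- so $\int_{X} i\,\partial\theta_{\epsilon}\wedge\bar\partial\theta_{\epsilon}\wedge T\wedge\beta^{\,n-p-1}\lesssim 1/\log(1/\epsilon)\to0$.

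Feeding this into the Cauchy--Schwarz inequality for the positive current $T$,
\begin{equation*}
|\langle\partial\theta_{\epsilon}\wedge\widetilde T,\varphi\rangle|^{2}\;\lesssim\;\|\varphi\|_{\infty}^{2}\,\Big(\int i\,\partial\theta_{\epsilon}\wedge\bar\partial\theta_{\epsilon}\wedge T\wedge\beta^{\,n-p-1}\Big)\,\Big(\int_{\operatorname{supp}d\theta_{\epsilon}}T\wedge\beta^{\,n-p}\Big),
\end{equation*}
and bounding the last factor by the finite mass of $T$ near $A$, yields $\langle\partial\theta_{\epsilon}\wedge\widetilde T,\varphi\rangle\to0$, hence $\partial\widetilde T=0$ and likewise $\bar\partial\widetilde T=0$, so $d\widetilde T=0$. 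I expect this final mass estimate to be the principal obstacle: a naive \emph{linear} logarithmic cutoff only produces an $O(1)$ bound, so the quadratic construction --- balanced against the finiteness of the trace measure --- is essential; and when $A$ is singular one must also control the comparison $(1,1)$-form built from local defining functions of $A$, which is exactly why it pays to reduce first to the smooth case.
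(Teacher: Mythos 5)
The paper does not actually prove this statement --- it is quoted from Demailly's book --- so your argument has to stand on its own. Its architecture is the standard one (order zero and positivity are immediate; closedness is the issue; cutoffs vanishing near $A$ plus Cauchy--Schwarz reduce everything to a Dirichlet-energy estimate on the transition tube), and you correctly isolate the crux. But your justification of the crux is not a proof, and as stated it is wrong. You claim that
\begin{equation*}
\int_{\{\epsilon^{2}\le|z'|^{2}\le\epsilon\}} i\,\partial\log|z'|^{2}\wedge\bar\partial\log|z'|^{2}\wedge T\wedge\beta^{\,n-p-1}=O(\log(1/\epsilon))
\end{equation*}
follows from the finiteness of the trace measure $\sigma_{T}=T\wedge\beta^{n-p}$ together with the logarithmic length of the tube in the variable $u=\log|z'|^{2}$. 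It does not: $i\,\partial u\wedge\bar\partial u$ is of size $|z'|^{-2}$ relative to $\beta$, so finiteness of $\sigma_{T}$ only yields the useless bound $\epsilon^{-2}\sigma_{T}(\mathrm{tube})$. Concretely, for $T=[C]$ with $C=\{z_{1}=z_{2}^{2}\}$ and $A=\{z_{1}=0\}$ one has $\sigma_{T}(\{|z_{1}|\le r\})\sim r$, so $\int_{\mathrm{tube}}|z_{1}|^{-2}\,d\sigma_{T}\sim\epsilon^{-1}$; the tube integral is nevertheless $O(\log(1/\epsilon))$ here, but only because $dz_{1}|_{C}=2w\,dw$ is small precisely where $\sigma_{T}$ concentrates. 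Quantifying that trade-off for general $T$ is exactly the content of the El Mir--Skoda energy lemma, and it uses the closedness of $T$ a second time, not just its finite mass. You have deferred the one genuinely hard step to an ``one checks'' backed by a false heuristic.

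The gap can be closed along your lines as follows. Take the nonnegative psh function $w_{\epsilon}=\max(\log|z'|^{2}-2\log\epsilon,0)$; then $i\,\partial w_{\epsilon}\wedge\bar\partial w_{\epsilon}$ dominates $i\,\partial u\wedge\bar\partial u$ on the tube, while $i\,\partial w_{\epsilon}\wedge\bar\partial w_{\epsilon}\le\tfrac12\,i\partial\bar\partial(w_{\epsilon}^{2})$ because $w_{\epsilon}\ge0$ and $w_{\epsilon}^{2}$ are psh; since $w_{\epsilon}^{2}T$ vanishes near $A$, one may integrate by parts against a fixed cutoff and obtain
\begin{equation*}
\int_{\mathrm{tube}} i\,\partial u\wedge\bar\partial u\wedge T\wedge\beta^{\,n-p-1}\;\le\;C\,(\log(1/\epsilon))^{2}.
\end{equation*}
This is weaker than your claimed $O(\log(1/\epsilon))$, so the Dirichlet energy $\int i\partial\theta_{\epsilon}\wedge\bar\partial\theta_{\epsilon}\wedge T\wedge\beta^{n-p-1}$ comes out $O(1)$ rather than $o(1)$, and your final display no longer yields convergence to zero. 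The argument still closes, but for the reason you discard: the second Cauchy--Schwarz factor $\int_{\operatorname{supp}d\theta_{\epsilon}}T\wedge\beta^{n-p}$ is not merely bounded by the mass of $T$, it tends to $0$, since $\operatorname{supp}d\theta_{\epsilon}\subset\{|z'|^{2}\le\epsilon\}$ shrinks to $A$ and $\widetilde T$ charges no mass on $A$. So: replace the unproved tube estimate by the psh/Stokes bound above, and let the second factor, not the first, do the vanishing. (This is essentially Demailly's proof of the more general El Mir theorem, with cutoffs $\chi(v/k)$ built from a psh function $v$ with $A\subset v^{-1}(-\infty)$ --- your $\log|z'|^{2}$ --- and the energy controlled by exactly this convexity trick; it also removes the need for your stratification step, since $\log(\sum|f_j|^2)$ works for singular $A$.)
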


Recall that $\widetilde{T}$ is defined as follows. For any smooth test $(n-1,n-1)$-form $\eta$, we let
\begin{align}\label{eq:extension}
\widetilde{T}(\eta):=\int_{X-A}T\wedge \eta. 
\end{align}

In particular,  Skoda's theorem implies the following result due to Bishop \cite[Theorem 3]{Bis64}, which will be used to prove \cref{prop:criterion}.
\begin{thm}[Bishop]\label{thm:Bishop}
Let $X$ be a  (not necessarily compact) complex manifold and let $A$ be a closed analytic subset of $X$.   Let $V$ be an analytic subset of pure
dimension $p$ of $X-A$. Assume that $V$ has locally finite volume near $A$. Then the topological closure $\overline{V}$ of $V$ is
an analytic subset of $X$. \qed
\end{thm}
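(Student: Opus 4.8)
The plan is to deduce Bishop's theorem from Skoda's extension theorem (Theorem~\ref{lem:skoda}) together with Siu's theorem on the analyticity of Lelong sublevel sets. Write $n=\dim X$. Since $V$ is a pure $p$-dimensional analytic subset of the manifold $X-A$, Lelong's theorem shows that the integration current $[V]$, given by $[V](\eta)=\int_{V_{\mathrm{reg}}}\eta$ on smooth compactly supported test forms $\eta$, is a closed positive current of bidimension $(p,p)$ (bidegree $(n-p,n-p)$) on $X-A$. The hypothesis that $V$ has locally finite volume near $A$ is exactly the statement that, with respect to a fixed hermitian metric on $X$, the mass of $[V]$ is finite on a neighbourhood of every point of $A$, i.e. $[V]$ has locally finite mass near $A$. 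Applying Theorem~\ref{lem:skoda} with $T=[V]$, the trivial extension $T:=\widetilde{[V]}$ is a closed positive current on all of $X$, coinciding with $[V]$ on $X-A$.

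Next I would examine Lelong numbers. For $x\in V$ one has $\nu([V],x)=\operatorname{mult}_x(V)\geq 1$, hence $\nu(T,x)\geq 1$, while $\nu(T,x)=0$ for $x\in (X-A)\setminus V$ (near such a point $[V]$ vanishes, $V$ being closed in $X-A$). By upper semicontinuity of $x\mapsto\nu(T,x)$ for any closed positive current, $\nu(T,x)\geq 1$ for all $x\in\overline V$, that is $\overline V\subseteq E_1(T):=\{x\in X:\nu(T,x)\geq 1\}$. By Siu's analyticity theorem, $E_1(T)$ is an analytic subset of $X$ of dimension $\leq p$; and since $T=[V]$ on $X-A$, the computation above gives the exact identity $E_1(T)\cap(X-A)=V$.

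Finally I would run a sandwich argument. Let $E_1(T)=\bigcup_i Z_i$ be the locally finite decomposition into irreducible components, and set $Z':=\bigcup_{Z_i\not\subseteq A}Z_i$, which is a closed analytic set. For each $Z_i\not\subseteq A$, the set $Z_i\setminus A$ is dense in $Z_i$ (complement of a proper analytic subset of an irreducible analytic set), so $\overline{Z_i\setminus A}=Z_i$; combined with $Z_i\setminus A\subseteq E_1(T)\cap(X-A)=V\subseteq\overline V$ this gives $Z'\subseteq\overline V$. Conversely, $\overline V\subseteq E_1(T)=Z'\cup\bigcup_{Z_i\subseteq A}Z_i$; if $x\in\overline V$ lay in some $Z_i\subseteq A$, then $x$ would be a limit of points of $V\subseteq Z'$, hence $x\in\overline{Z'}=Z'$. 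Therefore $\overline V=Z'$ is analytic, as claimed. The only genuinely heavy input is Siu's analyticity theorem for $E_1(T)$; everything else is formal once the Skoda extension is in place. The main obstacle is precisely this black box: if one wished to avoid it, the alternative is to argue locally near a point of $A$, projecting $E_1(T)$ (or $\overline V$ directly) onto a polydisc and recognizing it as a branched cover whose sheets are controlled by the finite volume bound — but that essentially reproves a substantial part of Siu's theorem and is considerably longer.
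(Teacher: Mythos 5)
Your argument is correct. Be aware, though, that the paper does not actually prove this statement: it is quoted from Bishop's 1964 paper with a \qed, preceded only by the remark that ``Skoda's theorem implies'' it. Your write-up is the standard modern derivation that makes that remark precise (it is essentially the proof in Demailly's book), and it rightly identifies that Skoda's extension theorem alone is not enough: the support of the trivial extension $\widetilde{[V]}$ is $\overline{V}$, but supports of closed positive currents need not be analytic, so one genuinely needs Siu's analyticity theorem for the sublevel sets $E_c(T)$ as a second deep input. The individual steps all check out: $\nu([V],x)=\operatorname{mult}_x(V)\geq 1$ on $V$ is Thie's theorem; the ordinary upper semicontinuity of $x\mapsto\nu(T,x)$ gives $\overline{V}\subseteq E_1(T)$; the identity $E_1(T)\cap(X-A)=V$ holds because $T$ restricts to $[V]$ there; and the final sandwich with the components of $E_1(T)$ not contained in $A$ is clean (the union $Z'$ is closed because the decomposition into irreducible components is locally finite). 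Your closing comment is also accurate: Bishop's original proof avoids Siu by a local proper-projection/branched-cover argument controlled by the volume bound, which is more elementary in its inputs but considerably longer; for the purposes of this paper, where Skoda's theorem is already quoted, your route is the natural one.
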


The following result, which is a variant of  in \cite[Lemma 5.4]{Den20},  will be crucial throughout this paper.
 \begin{lem}\label{lem:crucial}
 	Let $(Y,D)$ be a compact K\"ahler log pair.  Let $\omega$ be a pseudo-K\"ahler form on $U:=Y-D$ with   holomorphic sectional curvature bounded from above by a negative constant. Then
 	\begin{thmlist}
 		\item the trivial extension of $\omega$, denoted by $\varpi$, is a closed positive current;
 		\item  the cohomology class $\{\varpi\}$ is big and nef; 
 		\item for any admissible coordinates $(\Omega;z_1,\ldots,z_n)$, the local potential $\phi$ of $\varpi=\hess \phi$ satisfies
 	\begin{align}\label{eq:loglog}
 	\phi\gtrsim -\log \Big(\prod_{j=1}^{\ell}(-\log |z_j|^2)\Big).
 	\end{align} 
 	\end{thmlist}
 \end{lem}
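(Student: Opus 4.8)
The plan is to exploit the classical Ahlfors--Schwarz lemma together with the finite-mass criterion of Skoda (\cref{lem:skoda}) to pass from $\omega$ on $U$ to a closed positive current on $Y$. First I would treat (iii), the local estimate, since it also feeds into (i). Fix admissible coordinates $(\Omega;z_1,\dots,z_n)$ with $\Omega\cong\Delta^n$ and $D\cap\Omega=(z_1\cdots z_\ell=0)$. On $(\Delta^*)^\ell\times\Delta^{n-\ell}$ consider the Poincaré-type metric $\omega_P=\sum_{j=1}^\ell \frac{i\,dz_j\wedge d\bar z_j}{|z_j|^2(\log|z_j|^2)^2}+\sum_{j>\ell} i\,dz_j\wedge d\bar z_j$, whose holomorphic sectional curvature is bounded below by a negative constant. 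Applying the Ahlfors--Schwarz lemma (in the form comparing two metrics, one with holomorphic sectional curvature bounded above by $-2\pi c<0$ and one bounded below) to the identity map gives $\omega\lesssim \omega_P$ on $\Omega\cap U$. A local potential $\phi$ of $\varpi$ with $\varpi=\hess\phi$ can then be compared: since $\omega_P=\hess\psi$ for $\psi=-\sum_{j=1}^\ell\log(-\log|z_j|^2)+|z|^2$ up to harmonic terms, and the two forms differ by a closed positive current, plurisubharmonicity together with the upper bound $\omega\le C\omega_P$ forces $\phi\ge \psi+O(1)$ locally, which is exactly \eqref{eq:loglog}. The sign here requires a little care: one uses that $C\psi-\phi$ is plurisubharmonic and locally bounded above near $D$, hence bounded above everywhere, combined with the opposite comparison coming from semipositivity of $\omega$; the cleanest route is to bound $\phi$ below directly via $\int \omega$ along slices and the mean value inequality, as in \cite[Lemma 5.4]{Den20}.

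Next, (i) follows from (iii) plus Skoda. The upper bound $\omega\lesssim\omega_P$ shows that $\omega$ has finite mass near every point of $D$: indeed $\int_{(\Delta^*)^\ell\times\Delta^{n-\ell}}\omega_P\wedge\beta<\infty$ for any smooth $(n-1,n-1)$-form $\beta$ with compact support, because $\frac{dr}{r(\log r)^2}$ is integrable at $r=0$. Hence by \cref{lem:skoda} applied with $A=D$, the trivial extension $\varpi$ of $\omega$ is a closed positive $(1,1)$-current on $Y$.

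For (ii): bigness is immediate from Boucksom's criterion quoted after \cref{def:bignef}, since the absolutely continuous part $(\varpi)_{\rm ac}$ equals $\omega$ on the Zariski open set where $\omega$ is strictly positive, so $\int_Y(\varpi_{\rm ac})^n=\int_U\omega^n>0$ (the integral being finite again by the $\omega\lesssim\omega_P$ bound and positive since $\omega$ is a genuine Kähler metric on a nonempty open set). Nefness is the more delicate point and I expect it to be the main obstacle. The idea is to regularize: for each $\epsilon>0$ one wants a smooth closed $(1,1)$-form in $\{\varpi\}$ that is $\ge-\epsilon\omega_Y$. Following the strategy of \cite[Lemma 5.4]{Den20}, one fixes a quasi-psh function $\rho$ on $Y$ with $\hess\rho\ge-C\omega_Y$ and $\rho\to-\infty$ exactly along $D$ like $\log\big(-\log|z_j|^2\big)^{-1}$-type singularities dictated by \eqref{eq:loglog}; then $\phi-\epsilon\rho$ (suitably globalized via a partition of unity and the local potentials from (iii)) is a quasi-psh representative whose singularities along $D$ are milder, and one checks $\hess(\phi-\epsilon\rho)\ge-\epsilon' \omega_Y$ with $\epsilon'\to0$. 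A Demailly-type regularization of this almost-psh function, losing only arbitrarily small negativity, then produces the smooth forms required by the definition of nef. The technical heart is ensuring the singularity-type bound \eqref{eq:loglog} is uniform enough to globalize and to control the Hessian of the correction term; this is where the \emph{global} (not merely Hermitian) nature of $\omega$ enters, since one needs $\varpi$ to be genuinely closed for the cohomology class and the current $T-\epsilon\hess\rho$ to make sense in $\{\varpi\}$.
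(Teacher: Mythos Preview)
Your outline is essentially correct and matches the paper's approach for (i), for the bigness part of (ii), and for (iii): Ahlfors--Schwarz gives $\omega\lesssim\omega_P$ in admissible coordinates, Skoda then yields the closed positive extension $\varpi$, Boucksom's volume criterion gives bigness, and the potential comparison $C\psi_P-\phi$ psh $\Rightarrow$ locally bounded above gives \eqref{eq:loglog}. Two remarks on presentation: first, your order should be reversed---you need (i) before (iii), since the local potential $\phi$ of $\varpi$ only makes sense once $\varpi$ exists as a current on all of $\Omega$; second, your paragraph on (iii) is more hesitant than it needs to be: the single sentence ``$C\psi_P-\phi$ is psh, hence locally bounded above'' already gives $\phi\ge C\psi_P-O(1)$, and no mean-value or slice argument is required.

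The real divergence is in the nefness argument, where you overcomplicate things. The paper does not construct auxiliary quasi-psh functions $\rho$ with prescribed log-log singularities and regularize $\phi-\epsilon\rho$ by hand. Instead, it simply observes that \eqref{eq:loglog} forces all Lelong numbers of $\varpi$ to vanish: at any point of $D$ the local potential satisfies $\phi\gtrsim -\log\bigl(\prod_j(-\log|z_j|^2)\bigr)$, and since $\log(-\log r)=o(|\log r|)$ as $r\to 0$, the Lelong number $\nu(\varpi,x)=\liminf_{z\to x}\phi(z)/\log|z-x|$ is zero. Demailly's regularization theorem \cite[Corollary~6.4]{Dem92} then says directly that a closed positive current with identically zero Lelong numbers has nef cohomology class. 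So what you flagged as ``the main obstacle'' is in fact a one-line consequence of (iii); your proposed construction would work but is unnecessary.
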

\begin{proof}
Pick any point $x\in D$, and choose admissible coordinates $(\Omega;z_1,\ldots,z_n)$ centered at $x$ so that $D\cap \Omega=(z_1\cdots z_\ell=0)$. Since the  holomorphic sectional curvature of $\omega$  is bounded from above by a negative constant, by Ahlfors-Schwarz lemma, we can use \cite[Proposition 3.1.2]{Cad16}, which implies that there is a constant $\delta>0$ so that
\begin{align}\label{eq:bound}
 \frac{1}{\delta}\omega\leq  \omega_P:=\sum_{j=1}^{\ell}\frac{\sqrt{-1}dz_j\wedge d\bar{z}_j}{|z_j|^2(\log |z_j|^2)^2}+\sum_{k=\ell+1}^{n} \frac{\sqrt{-1}dz_k\wedge d\bar{z}_k  }{(1-|z_k|^2)^2}.
\end{align}
Consequently, the local mass of $\omega$ is bounded. By Skoda's theorem, its trivial extension $\varpi$ is a closed positive current.    Since $\int_{Y-D}\omega^n>0$, by Boucksom's criterion, $\{\varpi\}$ is big.

	Since \(\phi\) is a closed positive \((1, 1)\)-current, there is a psh function $\phi$ on $\Omega$ so that $\hess \phi=\varpi$. Now, by the very definition of trivial extension \eqref{eq:extension},  $ \varpi\leq \delta\omega_P$. Note that 
$$
\omega_P=-\hess \log \Big(\prod_{j=1}^{\ell}(-\log |z_j|^2)\cdot \prod_{k=\ell+1}^{n} (1-|z_k|^2) \Big).
$$
Since $\delta \omega_P-\varpi\geq 0$, the function 
$$
-\delta  \hess \log \Big(\prod_{j=1}^{\ell}(-\log |z_j|^2)\cdot \prod_{k=\ell+1}^{n} (1-|z_k|^2) \Big)-\phi
$$
	is thus a psh function, and as such, it is locally bounded from above. The inequality \eqref{eq:loglog} then follows.  Therefore, $\varpi$ has zero Lelong numbers everywhere. By the regularization theorem for closed positive currents of Demailly (see \cite[Corollary 6.4]{Dem92}), the class $\{\varpi\}$ is nef. The lemma is proved.
	\end{proof}

     The following result  due to Brunebarbe \cite[Proposition 3.3]{Bru20} will be used to prove \cref{general type}. For completeness sake, we provide a proof here.
   \begin{lem}\label{lem:pseudo}
   	Let $(Y,D)$ be a compact K\"ahler log-pair, and let $\omega$ be a pseudo-K\"ahler form on   $U:=Y-D$ so that it has non-positive holomorphic bisectional curvature and holomorphic sectional curvature bounded from above by a negative constant $-2\pi c$. Then the class $K_Y+D-c\{\varpi\}$ is pseudo-effective.
   \end{lem}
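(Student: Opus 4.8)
The plan is to exploit the curvature hypothesis to produce a singular hermitian metric on the line bundle $\mathcal{O}_Y(K_Y+D)$ whose curvature current dominates $c\varpi$, which immediately gives pseudo-effectivity of $K_Y+D-c\{\varpi\}$. The natural candidate metric is the one induced by $\omega$ itself: on $U$, the volume form $\omega^n$ (up to a constant) is a metric on $K_U=K_Y|_U$, and since $D$ is simple normal crossing, the form $\omega$ is, near $D$, comparable to the Poincar\'e-type form $\omega_P$ of \eqref{eq:bound}, so $\omega^n$ has at worst the mild singularities of $\omega_P^n$ along $D$. These are exactly log-type singularities, so $\omega^n$ (read in admissible coordinates, after multiplying by $\prod_{j=1}^{\ell}|z_j|^2$ to pass from $K_U$ to $K_Y+D$) extends to a singular hermitian metric $h$ on $K_Y+D$ whose local weight is plurisubharmonic and has zero Lelong numbers, exactly as in the proof of \cref{lem:crucial}.

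First I would fix an admissible coordinate chart $(\Omega;z_1,\dots,z_n)$ with $D\cap\Omega=(z_1\cdots z_\ell=0)$ and write $\omega = \tfrac{i}{2}\sum g_{j\bar k}dz_j\wedge d\bar z_k$ on $U\cap\Omega$; the metric on $K_Y+D$ is then $h = \bigl(\prod_{j=1}^{\ell}|z_j|^2\bigr)\cdot(\det g)^{-1}$, with weight $\varphi_h = -\log\det g + \sum_{j=1}^{\ell}\log|z_j|^2$. The key computation is the Chern–Lu / Ricci inequality: the hypothesis that the holomorphic sectional curvature of $\omega$ is $\le -2\pi c$ together with nonpositive holomorphic bisectional curvature forces a lower bound on the Ricci form, namely $\operatorname{Ric}(\omega) := -\hess\log\det g \ge 2\pi c\,\omega$ on $U$ (this is the standard fact that for a K\"ahler metric with nonpositive bisectional curvature and holomorphic sectional curvature bounded above by $-2\pi c$, the Ricci curvature is $\le -2\pi c$ in the appropriate sign convention; one derives it by tracing the curvature tensor and using that the bisectional-curvature terms have a favorable sign). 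Thus on $U$ we get $\Theta_h(K_Y+D) = \hess\varphi_h = \operatorname{Ric}(\omega) \ge c\,\omega$ as currents on $U$, where I have absorbed the $2\pi$ into the normalization $\hess = \tfrac{i}{2\pi}\partial\bar\partial$.

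Next I would extend this inequality across $D$. By the comparison \eqref{eq:bound}, $\omega\le\delta\omega_P$, and the argument in the proof of \cref{lem:crucial} shows $\varphi_h$ is, up to a bounded term, bounded below by $-\log\bigl(\prod_{j=1}^{\ell}(-\log|z_j|^2)\bigr)$, hence is locally bounded above and extends to a plurisubharmonic function on $\Omega$ with zero Lelong numbers; in particular $h$ is a genuine singular hermitian metric on $K_Y+D$ and $\Theta_h(K_Y+D)$ is a well-defined closed positive current on $Y$. Since $\varpi$ is by definition the trivial extension of $\omega$ and $\Theta_h(K_Y+D)|_U\ge c\,\omega$ with both sides extending trivially (Skoda's theorem, \cref{lem:skoda}, applied as in \cref{lem:crucial}), we conclude $\Theta_h(K_Y+D)\ge c\,\varpi$ as currents on $Y$. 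Therefore $\Theta_h(K_Y+D)-c\,\varpi$ is a closed positive current representing $K_Y+D-c\{\varpi\}$, so that class is pseudo-effective.

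The main obstacle I expect is the boundary analysis: making rigorous that the Ricci lower bound $\operatorname{Ric}(\omega)\ge 2\pi c\,\omega$, established a priori only on $U$, globalizes to the inequality of currents $\Theta_h(K_Y+D)\ge c\,\varpi$ on all of $Y$ — one must be careful that taking trivial extensions of the two sides is compatible with the inequality, which is where the finite-mass/Skoda machinery from \cref{lem:crucial} does the work, together with the fact that $\varphi_h$ genuinely extends as a psh weight rather than acquiring a singular defect along $D$. The differential-geometric input (bisectional $\le 0$ and holomorphic sectional $\le -2\pi c$ imply $\operatorname{Ric}\le -2\pi c$) is classical and essentially a pointwise trace inequality, so I regard it as routine.
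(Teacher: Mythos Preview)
Your approach is essentially identical to the paper's: construct the singular metric on $K_Y+D$ from the volume form $\omega^n$, deduce $\Theta_{h}(K_Y+D)\ge c\,\omega$ on $U$ from the Ricci bound $-\mathrm{Ric}(\omega)\ge 2\pi c\,\omega$ (a consequence of nonpositive bisectional curvature together with holomorphic sectional curvature $\le -2\pi c$), and extend across $D$ using the Poincar\'e comparison \eqref{eq:bound}. Watch your signs, though: the correct local weight is $\varphi_h=+\log\det g+\sum_{j=1}^{\ell}\log|z_j|^2$ (so that $\hess\varphi_h=-\tfrac{1}{2\pi}\mathrm{Ric}(\omega)\ge c\,\omega$), and the comparison $\omega\le\delta\omega_P$ gives $\varphi_h$ bounded \emph{above} near $D$ --- not below --- which is exactly what is needed for the psh extension.
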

   \begin{proof}
   	Let $U_0$ be  the Zariski open set of $U$ so that $\omega$ is strictly positive definite. Since $\omega$ has non-positive holomorphic bisectional curvature and holomorphic sectional curvature bounded from above by a negative constant $-2\pi c$, one  has
   	$$
   	-{\rm Ric}(\omega)\geq  2\pi c\omega.
   	$$
   	Let $h_{K_Y+D}$ be the singular hermitian metric on $K_Y+D$ induced by $\omega$. We will prove that its curvature current $\Theta_{h_{K_Y+D}}(K_Y+D)$ is positive.

   	Pick any point $x\in D$, and choose admissible coordinates $(\Omega;z_1,\ldots,z_n)$ centered at $x$ so that $D\cap \Omega=(z_1\cdots z_\ell=0)$.  Then for the local frame $\sigma:=d\log z_1\wedge \cdots d\log z_\ell\wedge dz_{\ell+1}\wedge \cdots\wedge dz_n$ of $K_Y+D|_{\Omega}$, one has
   	$$
   	e^{-\varphi}:=|\sigma|^2_{h_{K_Y+D}}=\frac{i dz_1\wedge d\bar{z}_1\wedge \cdots \wedge i dz_n\wedge d\bar{z}_n }{|z_1\cdots z_\ell|^2\omega^n} \gtrsim\prod_{j=1}^{\ell}(-\log |z_i|^2)
   	$$ 
   	where the last inequality follows from \eqref{eq:bound}. Hence the   local potential  $\varphi$ of $h_{K_Y+D}=e^{-\varphi}$ is always locally bounded. One the other hand, $\hess \varphi=-\frac{1}{2\pi}{\rm Ric}(\omega)\geq 0$ over  the Zariski dense open set $U_0$. This implies that the curvature current   $\Theta_{h_{K_Y+D}}(K_Y+D)=\hess \varphi$ is positive everywhere. On the other hand, since $\varpi$ is the trivial extension of $\omega$, one has thus
   	\begin{align}\label{eq:compare}
   	\Theta_{h_{K_Y+D}}(K_Y+D)\geq c\varpi.
   	\end{align}
   	The lemma follows.
   \end{proof}

 \section{Pseudo-K\"ahler metrics induced by nilpotent harmonic bundles} 
 In this section we prove that 	the nilpotent harmonic bundle on a complex manifold $U$ induces a pseudo-K\"ahler metric with nice curvature properties similar to the case of period domains. 
\begin{proposition}\label{lem:kahler}
	Assume that \(U\) is a complex manifold that supports a  harmonic bundle $(E,\theta,h)$ so that $\theta:T_U\to \End(E)$ is injective at one point. Then \(U\) admits a pseudo-K\"ahler metric $\omega$ with non-positive holomorphic bisectional curvature. If $(E,\theta,h)$ is moreover nilpotent, then  the holomorphic sectional curvature of $\omega$ is bounded from above by $-\frac{1}{4^{\rank E-1}}$.
\end{proposition}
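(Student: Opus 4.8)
The plan is to construct the metric $\omega$ by pulling back the natural metric on the ``classifying space'' of nilpotent Higgs bundles. More precisely, a harmonic bundle $(E,\theta,h)$ on $U$ is equivalent to a flat bundle together with a harmonic metric, and at each point the datum $(\theta, h)$ determines a point in a homogeneous space $\mathrm{GL}_r(\bC)/\mathrm{U}(r)$ (the space of hermitian metrics on $E_x$) decorated with the endomorphism $\theta_x$. The map $U \to \mathrm{GL}_r(\bC)/\mathrm{U}(r)$ given by $x \mapsto h_x$ is precisely the classifying map, and $\mathrm{GL}_r(\bC)/\mathrm{U}(r)$ carries a natural symmetric-space metric of non-positive curvature. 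However, this map need not be an immersion, so instead I would proceed analytically: form the $(1,1)$-form
\begin{align*}
\omega := \sqrt{-1}\,\mathrm{Tr}(\theta \wedge \theta_h^*),
\end{align*}
(up to a normalizing constant), which is the pullback via the classifying map of a suitable invariant form. The first task is to show $\omega$ is a closed semipositive $(1,1)$-form, strictly positive exactly where $\theta$ is injective as a map $T_U \to \End(E)$; semipositivity and the pointwise characterization of positivity are immediate from $\omega(v,\bar v) = |\theta(v)|_h^2 \geq 0$, and closedness follows from the harmonic bundle (Hitchin) equations $D_h\theta = 0$, $\bar\partial_E\theta = 0$ together with $\theta\wedge\theta=0$. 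Since $\theta$ is injective at one point (hence on a Zariski-dense open set), $\omega$ is a pseudo-K\"ahler metric.

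The curvature computations are the heart of the matter and should be carried out via the standard Gauss–Codazzi / harmonic-map formalism. Viewing $\theta \in C^\infty(U, \mathrm{Hom}(T_U, \End(E)))$ as a section and using the equations satisfied by a harmonic bundle, one obtains a Bochner-type formula expressing the curvature of $\omega$ in terms of $\theta$ and of the curvature of $(\End(E), h)$, which by the harmonic equation is $[\theta, \theta_h^*]$ (up to sign). The non-positivity of the holomorphic bisectional curvature should then drop out from a computation formally identical to Griffiths' curvature formula for the period map / Hodge metric: one gets an expression like $-|[\theta(v),\theta_h^*(w)]|^2 - |(\text{second fundamental form terms})|^2 \leq 0$, because all the ``error'' terms enter with a negative sign (this is the standard phenomenon that classifying maps into non-positively curved symmetric spaces have curvature decreasing property). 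I would extract the bisectional curvature bound first, with the weaker conclusion holding for an arbitrary harmonic bundle.

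For the refined holomorphic sectional curvature bound under the nilpotency hypothesis, the key is that $\theta$ being nilpotent (characteristic polynomial $t^r$) forces a flag structure: iterating $\theta$ along a fixed tangent direction $v$, the operators $\theta(v), \theta(v)^2, \ldots, \theta(v)^{r-1}$ satisfy $\theta(v)^r = 0$. One then runs the holomorphic sectional curvature computation along the direction $v$ and reduces the estimate to a linear-algebra inequality for the nilpotent operator $N := \theta(v)$ with $N^r = 0$: one needs a lower bound of the form $|[N, N^*]|^2 \cdot |Nw|^2_h \geq (\text{const})\,|Nw|^4_h$ for the relevant vectors, and the combinatorial constant $\tfrac{1}{4^{r-1}}$ emerges from tracking how the norm concentrates along the weight filtration of $N$ (each step down the flag can lose at most a factor related to a $2\times 2$ block, giving the power of $4$). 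This is exactly the mechanism by which, in the $\bC$-PVHS case, the Hodge metric has holomorphic sectional curvature bounded away from zero, and the constant $\tfrac{1}{4^{r-1}}$ is the ``worst case'' over all nilpotent orbits of weight $r-1$.

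I expect the main obstacle to be the precise bookkeeping in this last linear-algebra step: making the curvature formula for $\omega$ explicit enough that the nilpotency of $\theta$ can be leveraged, and then optimizing the constant over all nilpotent endomorphisms of rank at most $r$. The differential-geometric input (closedness, semipositivity, non-positive bisectional curvature) is essentially a transcription of the Simpson/Hitchin formalism and should be routine; the quantitative negativity of holomorphic sectional curvature, and pinning down the exponent $r-1$ in $4^{r-1}$, is where the real work lies.
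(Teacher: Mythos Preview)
Your plan matches the paper's proof essentially line for line: define $\omega = -i\,\mathrm{tr}(\theta_h^* \wedge \theta)$, get closedness from $D_h\theta = 0$, get non-positive bisectional curvature from the curvature-decreasing property of the holomorphic inclusion $T_U \hookrightarrow \End(E)$ combined with $R^E_{i\bar j} = -[\theta_i,\theta_j^*]$ (which yields $R_{i\bar j j\bar i} \leq -|[\theta_i,\theta_j^*]|^2$), and then reduce the sectional bound to a linear-algebra lemma for nilpotent $N = \theta(v)$. The only imprecision is the form of that lemma: the inequality needed is $|[N,N^*]|_h \geq \tfrac{1}{2^{r-1}}|N|_h^2$ (no auxiliary vector $w$ appears, since $|v|_\omega^2 = |\theta(v)|_h^2 = |N|^2$), and the paper proves it exactly as you sketch, by upper-triangularizing $N$ in an $h$-unitary basis adapted to the flag and bounding a single diagonal entry of $[N,N^*]$ via a dyadic pigeonhole on the row-norms.
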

\begin{proof}
	We define a metric \(h_{U}\) as the pullback metric of \(h\) by the map \(T_{U} \stackrel{\theta}{\to} \End(E)\). This gives
	$$
	h_U(\xi_1,\xi_2):=\langle \theta(\xi_1),\theta(\xi_2)\rangle_h
	$$
	for any $\xi_1,\xi_2\in T_U$. The fundamental  $(1,1)$-form $\omega$ relative to $h_U$ can thus be written as
\begin{align}\label{eq:pseudo Kahler}
	\omega=-i {\rm tr}(\theta_h^*\wedge\theta), 
	\end{align}
	which shows that $\omega\geq 0$. Since $\theta:T_U\to \End(E)$ is immersive at one point, $\omega$ is therefore strictly positive at a general point. Moreover, 
	$$
	d\omega=-i d{\rm tr}(\theta_h^*\wedge\theta)=-i  {\rm tr}(D_h\theta_h^*\wedge\theta)+ i  {\rm tr}(\theta_h^*\wedge D_h\theta)
	$$
	where $D_h$ is the Chern connection of $(E,h)$. 
	Note that $D_h\theta=0=D_h\theta_h^*$, hence $d\omega=0$. Thus $\omega$ is a pseudo-K\"ahler form. 
	
Let $p\in U$ so that $T_U\to \End(E)$ is injective. Pick local coordinates $(z_1,\ldots,z_n)$ centered at $p$, and set $\theta_i:=\theta(\frac{\d}{\d z_i})$. Denote by $ {\theta}_i^*$ the adjoint of $\theta_i$ with respect to $h$. Write $R$ to be the curvature tensor of $\omega$, and denote by $R^{\End(E)}$ the curvature tensor of $\End(E)$ induced by the harmonic metric $h$. By  the curvature decreasing properties of subbundles, the holomorphic bisectional curvature in the direction $\frac{\d}{\d z_i}$ and $\frac{\d}{\d z_j}$ is
\begin{align*}
	R_{i\bar{j}j\bar{i}}:&=\langle  R_{i\bar{j}}(\frac{\d}{\d z_j}),\frac{\d}{\d z_i}\rangle_{h_U}\leq \langle  R^{\End(E)}_{i\bar{j}}(\theta_j),\theta_i\rangle_{h}.
\end{align*}

	By the flatness of \(D_{h} + \theta + \theta_{h}^{\ast}\), we have \(R^{E}_{i\overline{j}} = - [\theta_{i}, \theta_{j}^{\ast}]\), so \(R^{\End(E)}_{i\bar{j}}(\theta_k) = - [[\theta_{i}, \theta_{j}^{\ast}],\theta_{k}]\). This gives

\begin{align*} 
	R_{i\bar{j}j\bar{i}} & \leq -\langle  [[\theta_i,\theta_j^*], \theta_j ],\theta_i\rangle_{h} \\
& =-{\rm tr}([[\theta_i,\theta_j^*], \theta_j ] \theta^*_i)\\
&=-{\rm tr}( [\theta_i,\theta_j^*] [ \theta_j,  \theta^*_i])\\
&=-|[\theta_i,\theta_j^*] |^2\leq 0.
\end{align*}
We conclude that $\omega$ has non-positive holomorphic bisectional curvature at any point $p\in U$ where $\theta$ is injective. 

Assume now $(E,\theta,h)$ is moreover nilpotent. Then $\theta_i:E_x\to E_x$ is a nilpotent endomorphism for each $i$ and $x\in U$.   Recall that the holomorphic sectional curvature in the direction $\frac{\d}{\d z_i}$ is defined by
$$
K(\frac{\d}{\d z_i}):=\frac{R_{i\bar{i}i\bar{i}}}{|\frac{\d}{\d z_i}|^4}\leq \frac{-|[\theta_i,\theta_i^*] |^2}{|\theta_i|^4}
$$ 
Since $\theta_i$ is nilpotent,	by  \cref{lem:algebra} below, one has
$$
|   [\theta_i, {\theta}^*_i]|\geq \frac{|\theta_i|^2}{2^{\rank E-1}}.
$$
This proves that 
	$$K(\frac{\d}{\d z_i})\leq -\frac{1}{4^{\rank E-1}}.$$
Since the local coordinate is arbitrary, this proves that the holomorphic sectional curvature of $\omega$ is bounded from above by $-\frac{1}{4^{\rank E-1}}$.
	\end{proof}

The following lemma of linear algebra was outlined in \cite[p. 27]{Sim92}.
\begin{lem}\label{lem:algebra}
	Let $A$ be a nilpotent \(n \times n\)-matrix with values in the complex numbers, and let \(H\) be an hermitian definite positive matrix of size \(n\). Let \(A^{\ast} := H {}^{t} \overline{A} H^{-1}\) be the adjoint of \(A\) with respect to \(H\). Then \(|[A,A^*]|_{H}\geq \frac{1}{2^{n-1}}|A|_{H}^2\), where $|A|_{H}^{2} = \frac{1}{2^{n-1}}{\rm tr}(AA^*)$.
\end{lem}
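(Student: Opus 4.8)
\textbf{Proof plan for Lemma~\ref{lem:algebra}.}
The plan is to reduce the claim to a computation with the nilpotent filtration of $A$, and to proceed by induction on $n$. First I would normalize: by a change of orthonormal basis (with respect to $H$) we may assume $H = \mathrm{Id}$, so $A^{*} = {}^{t}\overline{A}$ is the usual conjugate transpose and $|A|^{2} = {\rm tr}(AA^{*})$ is the Hilbert--Schmidt norm squared. Since $A$ is nilpotent, up to a further unitary change of coordinates we may assume $A$ is strictly upper triangular (Schur form). The key quantity $[A,A^{*}] = AA^{*} - A^{*}A$ is traceless and Hermitian; the point is to bound its norm below in terms of the norm of $A$.

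The heart of the argument is an inductive splitting. Write $e_{1}$ for the first basis vector; since $A$ is strictly upper triangular, $A e_{1} = 0$, and the last row of $A$ is zero as well. Decompose $\bC^{n} = \bC e_{n} \oplus \bC^{n-1}$ where $\bC^{n-1} = \mathrm{span}(e_{1},\dots,e_{n-1})$. With respect to this splitting, $A = \begin{pmatrix} A' & v \\ 0 & 0 \end{pmatrix}$ where $A'$ is a strictly upper triangular $(n-1)\times(n-1)$ matrix (hence nilpotent) and $v \in \bC^{n-1}$ is a column vector. Then $|A|^{2} = |A'|^{2} + |v|^{2}$. Computing the commutator blockwise, the top-left $(n-1)\times(n-1)$ block of $[A,A^{*}]$ equals $[A',(A')^{*}] - v v^{*}$ (a rank-one correction), and the bottom-right entry equals $|v|^{2}$. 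I would then estimate $|[A,A^{*}]|^{2} \geq |[A',(A')^{*}] - vv^{*}|^{2} + |v|^{4}$, apply the triangle inequality $|[A',(A')^{*}] - vv^{*}| \geq |[A',(A')^{*}]| - |v|^{2}$ together with the inductive hypothesis $|[A',(A')^{*}]| \geq 2^{-(n-2)}|A'|^{2}$, and then do a one-variable optimization over the relative sizes of $|A'|^{2}$ and $|v|^{2}$ to extract the factor $2^{-(n-1)}$. The base case $n=1$ is trivial since then $A=0$.

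The main obstacle I anticipate is making the inductive estimate tight enough to yield exactly the constant $2^{-(n-1)}$: the naive triangle inequality loses too much, because subtracting the rank-one term $vv^{*}$ can in principle cancel part of $[A',(A')^{*}]$. I expect one must either (a) use that $vv^{*}$ is positive semidefinite while $[A',(A')^{*}]$ has both signs, so that $|[A',(A')^{*}] - vv^{*}|^{2} = |[A',(A')^{*}]|^{2} - 2\,{\rm tr}([A',(A')^{*}] vv^{*}) + |v|^{4}$ and control the cross term by $\pm 2 v^{*}[A',(A')^{*}]v$, or (b) choose a smarter splitting — e.g. peel off a $1$-dimensional piece on which $A$ acts in a controlled way, using that the image of $A$ is contained in a proper $A$-invariant subspace. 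I would carry out the block computation first, see exactly which cross term appears, and then decide whether the crude bound suffices after the final scalar optimization; if not, I would refine by keeping the sign information in $2\,{\rm tr}([A',(A')^{*}]\,vv^{*})$. The remaining steps — the blockwise commutator identity and the elementary optimization $\min_{t\in[0,1]} \big( (2^{-(n-2)}t - (1-t))^{2} + (1-t)^{2} \big) \geq \text{const}$ — are routine once the setup is fixed.
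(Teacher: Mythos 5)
Your normalization (reduce to $H=\mathrm{Id}$ and $A$ strictly upper triangular via a unitary basis adapted to the nilpotent flag) is exactly the paper's first step, but from there the two arguments genuinely diverge. The paper's proof, following Simpson, is direct and non-inductive: it only looks at the diagonal entries of $[A,A^*]$, namely $b_{ii}=\sum_{j>i}|a_{ij}|^2-\sum_{k<i}|a_{ki}|^2\ \geq\ c_i-\sum_{j<i}c_j$ where $c_i$ is the squared norm of the $i$-th row, chooses the \emph{first} index $m$ with $c_m\geq 2^{-(n-m)}|A|^2$ (it exists because $\sum_{i=1}^{n-1}2^{-(n-i)}<1$), and sums a geometric series to get $b_{mm}\geq 2^{-(n-1)}|A|^2$, whence $|[A,A^*]|\geq |b_{mm}|$ finishes. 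Your inductive block-peeling also closes, and the obstacle you flag dissolves: with the corrected signs the top-left block of $[A,A^*]$ is $[A',(A')^*]+vv^*$ and the corner entry is $-|v|^2$, so the crude bounds $|[A,A^*]|^2\geq |[A',(A')^*]+vv^*|^2+|v|^4$ and $|[A',(A')^*]+vv^*|\geq 2^{-(n-2)}|A'|^2-|v|^2$ reduce the step to the scalar inequality $\max(0,cx-y)^2+y^2\geq \tfrac{c^2}{4}(x+y)^2$ with $c=2^{-(n-2)}$, $x=|A'|^2$, $y=|v|^2$; substituting $u=cx-y$ in the nontrivial case turns this into $u^2+y^2\geq\tfrac14\bigl(u+(1+c)y\bigr)^2$, which follows from Cauchy--Schwarz whenever $1+(1+c)^2\leq 4$, i.e.\ $c\leq\sqrt3-1$ --- true for $n\geq3$, while $n=2$ is trivial since then $A'=0$. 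So no refinement of the cross term ${\rm tr}([A',(A')^*]vv^*)$ is needed. The paper's route is shorter and identifies the extremal structure (a single dominant row); yours is more mechanical once set up and makes visible where each factor of $\tfrac12$ is spent, but you should carry out the scalar optimization explicitly rather than leave it as "routine," since it is precisely where the constant is tight.
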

\begin{proof}
	Since \(A\) is nilpotent, there is a strictly decreasing flag \(\mathbb{C}^{n} = F_{0} \supsetneq F_{1} \supsetneq \dotsc \supsetneq F_{m} = 0\) \((m \leq n)\) and such that \(A F_{i} \subset F_{i+1}\). Applying the standard orthonormalization process, we may assume that the flag \((F_{i})\) is adapted to a \(H\)-unitary base of \(\mathbb{C}^{n}\). Changing the standard base to this new base, we may now assume that \(A\) is strictly upper triangular, and \(H\) is the identity.
	\medskip
	
	Write $A:=(a_{ij})_{1\leq i,j\leq n}$. Denote by $[A,A^*]:=(b_{ij})_{1\leq i,j\leq n}$.  Since $A$ is strictly upper triangular, then 
	$$
	b_{ii}= \sum_{j=i+1}^{n}|a_{ij}|^2-\sum_{k=1}^{i-1}|a_{1k}|^2.
	$$
	Set $c_i:= \sum_{j=i+1}^{n}|a_{ij}|^2$. Then $\sum_{i=1}^{n-1}c_i=|A|^2$. There exists an integer $m$ with $1\leq m\leq n-1$ so that
$$ 
c_i<\frac{1}{2^{n-i}}|A|^2 $$
for $i<m$ and $$
c_m\geq\frac{1}{2^{n-m}}|A|^2. 
$$
Note that $b_{ii}\geq c_{i}-\sum_{j=1}^{i-1}c_j$. This implies $$b_{mm}\geq \frac{1}{2^{n-1}}|A|^2.$$
The lemma follows from the fact that $|[A,A^*]|^2\geq |b_{mm}|^2$. 
	\end{proof}
\section{Criterion for Picard hyperbolicity}
In this section we will establish our criterion for pseudo-Picard hyperbolicity of \emph{quasi-compact K\"ahler manifolds}. \cref{criteria1,criteria2} will be used to prove \cref{main,main2}  respectively.  Their proofs are inspired by the   Second Main theorem of Brotbek-Brunebarbe \cite{BB20} and by \cite[Lemma 5.1]{Yam19}.  Since we work on K\"ahler manifolds rather than projective ones,  we have to establish  the criterion on removable singularities of  holomorphic maps from  punctured disks into   {compact K\"ahler manifold} in term of the growth of Nevanlinna characteristic functions (see \cref{prop:criterion}).
\begin{thm}\label{thm:criterion}
Let $Y$ be a compact K\"ahler manifold, and let $D$ be a simple normal crossing divisor on $Y$. Assume that $U:=Y-D$ is equipped with a pseudo-K\"ahler metric $\omega$  whose holomorphic sectional curvature   is bounded from above by a negative constant $-2\pi c$, then 
\begin{thmlist}
	\item \label{criteria1}$U$ is  Picard hyperbolic modulo the non-K\"ahler locus $E_{nK}(\{\varpi\})$. Here $\varpi$ is the closed positive $(1,1)$-current on $Y$ which is the trivial extension of $\omega$, and its cohomology class $\{\varpi\}$ is big. Moreover, 
\begin{align}\label{eq:nK}
	E_{nK}(\{\varpi\})\subset Y- \{y\in U\mid \omega\ \mbox{is strictly positive at}\ y\}.
\end{align}
	\item \label{criteria2}If $c\{\varpi\}-\{D\}$ is a big class, then $Y$ is   Picard hyperbolic modulo    $E_{nK}(c\{\varpi\}-\{D\})\cup D$.
	\end{thmlist}  
\end{thm}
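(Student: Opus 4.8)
Both parts will be deduced from the removable-singularity criterion for holomorphic maps $\Delta^{\ast}\to Y$ into a compact K\"ahler manifold phrased through the growth of Nevanlinna characteristic functions (\cref{prop:criterion}), combined with the positivity of $\{\varpi\}$ from \cref{lem:crucial} and the Ahlfors--Schwarz lemma supplied by the curvature hypothesis. Fix once and for all a K\"ahler form $\omega_{Y}$ on $Y$ and a smooth closed representative $\theta_{0}\in\{\varpi\}$, and write $\varpi=\theta_{0}+\hess\phi$ with $\phi$ a $\theta_{0}$-plurisubharmonic potential; by \cref{lem:crucial}, $\varpi$ is a closed positive current, $\{\varpi\}$ is big and nef, $\phi$ is bounded above on $Y$ and smooth on $U$, and near $D$ it satisfies the log-log bound \eqref{eq:loglog} together with the companion upper bound $\phi\le C-\delta\log\prod_{j}(-\log|z_{j}|^{2})$ coming from \eqref{eq:bound}. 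I would first establish \eqref{eq:nK}: since $\{\varpi\}$ is big and nef, \cref{eq:CT} identifies $E_{nK}(\{\varpi\})$ with ${\rm Null}(\{\varpi\})$, so it suffices to show that any positive-dimensional subvariety $Z$ meeting $U_{0}:=\{y\in U\mid\omega\text{ strictly positive at }y\}$ has $\int_{Z}\{\varpi\}^{\dim Z}>0$; taking a resolution $\mu\colon\widetilde{Z}\to Z$ which is an immersion at some smooth point of $Z$ in $U_{0}$, the current $\mu^{\ast}\varpi$ lies in the nef class $\mu^{\ast}\{\varpi\}$ and its absolutely continuous part equals $\mu^{\ast}\omega>0$ on a non-empty open set, so Boucksom's criterion gives $\int_{\widetilde Z}(\mu^{\ast}\{\varpi\})^{\dim Z}>0$.

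For \cref{criteria1}, let $f\colon\Delta^{\ast}\to U$ be holomorphic and non-constant with $f(\Delta^{\ast})\not\subset E_{nK}(\{\varpi\})$; by \cref{prop:criterion} it is enough to bound the Nevanlinna characteristic $T_{f}(r,\omega_{Y})$ by $O(\log\tfrac1r)$ as $r\to0$. Using \cref{boucksom}, fix a K\"ahler current $T=\theta_{0}+\hess\psi\in\{\varpi\}$ with analytic singularities along $E_{nK}(\{\varpi\})$ and $T\ge\delta\omega_{Y}$; since $f(\Delta^{\ast})\not\subset E_{nK}(\{\varpi\})\supset\{\psi=-\infty\}$, the pullback $f^{\ast}\psi$ is a genuine potential on $\Delta^{\ast}$, bounded above. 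Now combine: (a) $T\ge\delta\omega_{Y}$ gives $T_{f}(r,\omega_{Y})\le\tfrac1\delta\,T_{f}(r,T)$; (b) the First Main Theorem (Jensen's formula on the annuli $t<|z|<r_{0}$) writes $T_{f}(r,T)$ as $T_{f}(r,\varpi)$ plus the counting and proximity terms attached to $\psi$ and $\phi$, which are of order $O(\log\tfrac1r)$ thanks to $\psi\le C$ and to the log-log bounds on $\phi$; (c) since the holomorphic sectional curvature of $\omega$ is $\le-2\pi c<0$, the Ahlfors--Schwarz lemma (in the degenerate form used in the proof of \cref{lem:crucial}) yields $f^{\ast}\varpi=f^{\ast}\omega\lesssim\omega_{\Delta^{\ast}}$ for the complete Poincar\'e metric $\omega_{\Delta^{\ast}}$ on $\Delta^{\ast}$, whose mass near the puncture is finite, so $T_{f}(r,\varpi)=O(\log\tfrac1r)$. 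Hence $T_{f}(r,\omega_{Y})=O(\log\tfrac1r)$, equivalently $f^{\ast}\omega_{Y}$ has finite mass near $0$ (so the closure of the graph of $f$ is analytic by \cref{thm:Bishop}), and $f$ extends to $\overline f\colon\Delta\to Y$; by \eqref{eq:nK} the exceptional locus is the stated one.

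For \cref{criteria2} the argument is unchanged with $\{\varpi\}$ replaced by the big class $c\{\varpi\}-\{D\}$: choose a K\"ahler current $S=c\theta_{0}-[D]+\hess\psi$ in that class with analytic singularities (\cref{boucksom}) and $S\ge\delta\omega_{Y}$; for $f\colon\Delta^{\ast}\to Y$ with $f(\Delta^{\ast})\not\subset E_{nK}(c\{\varpi\}-\{D\})\cup D$ the preimage $f^{-1}(D)$ is discrete, so $f^{\ast}[D]=f^{\ast}D\ge0$ and $f^{\ast}\psi,f^{\ast}\phi$ are well defined, and pulling back $\delta\omega_{Y}\le c\varpi-[D]+\hess\psi$ produces the extra, harmless term $-N_{f}(r,D)\le0$, while $f^{\ast}\varpi$ still has finite mass near $D$ by \eqref{eq:bound} and Ahlfors--Schwarz applies on $\Delta^{\ast}\setminus f^{-1}(D)$; the same chain (a)--(c) bounds $T_{f}(r,\omega_{Y})$ by $O(\log\tfrac1r)$ and forces $f$ to extend. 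The hard part is the bookkeeping in step (b): as $\{\varpi\}$ is only a transcendental class --- not a line-bundle Chern class, unlike the Griffiths line bundle used in \cite{Den21,BB20,Bru20} --- one cannot use sections, and one must run a First Main Theorem directly for the closed positive current $\varpi$ and for the K\"ahler current $T$ (resp.\ $S$) with analytic singularities, in the spirit of \cite{BB20}, checking that all error terms --- those from the unbounded-below log-log potential $\phi$ near $D$ and those from ${\rm Sing}(T)$ --- grow at most like $\log\tfrac1r$. This is precisely where the closedness and global positivity of $\omega$ are essential: they make $\varpi$ a closed positive current (by Skoda's theorem \cref{lem:skoda}, as in \cref{lem:crucial}) whose class is big and nef, so that \cref{boucksom,eq:CT} apply.
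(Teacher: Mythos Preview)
Your treatment of \eqref{eq:nK} and of \cref{criteria1} is essentially correct, and in one respect more direct than the paper's: for a map $f:\Delta^*\to U$ the Ahlfors--Schwarz lemma on $\Delta^*$ itself (whose Poincar\'e metric has bounded area near the puncture) already gives $T_{f,\varpi}(r)=O(\log r)$, whereas the paper reaches the same conclusion through the Second Main--type inequality $cT_{f,\omega}(r)\le c_1\log T_{f,\omega}(r)+O(\log r)$ obtained from $\hess\log|f'|_\omega^2\ge cf^*\omega$ via Jensen's formula and Borel's growth lemma. One imprecision worth flagging: the error produced by the log--log potential $\phi$ in your step~(b) is of size $O(\log T_{f,\omega_Y}(r))$ (this is exactly the paper's \eqref{eq:second}), not merely $O(\log\tfrac1r)$; this is harmless here since the extra term is absorbed in the final self--referential bound on $T_{f,\omega_Y}$.

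There is, however, a genuine gap in your argument for \cref{criteria2}. When $f:\Delta^*\to Y$ meets $D$, you propose to bound $T_{f,\varpi}(r)$ by applying Ahlfors--Schwarz on $\Delta^*\setminus f^{-1}(D)$ and then to discard the term $-N_f(r,D)\le 0$ as ``harmless''. But the Poincar\'e metric of $\Delta^*\setminus f^{-1}(D)$ is \emph{larger} than that of $\Delta^*$ (it acquires a cusp at every additional puncture), and its mass over an annulus has no a~priori bound independent of the number of points of $f^{-1}(D)$ inside; so this Ahlfors--Schwarz estimate does \emph{not} yield $T_{f,\varpi}(r)=O(\log r)$. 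The counting function you throw away is precisely what encodes this growth. The paper's mechanism is different: from $\hess\log|f'|_\omega^2\ge cf^*\omega-[f^{-1}D]$ one gets, via Jensen and Borel's lemma, the inequality
\[
cT_{f,\omega}(r)-N^{[1]}_{f,D}(r)\ \le\ c_1\log T_{f,\omega}(r)+O(\log r)
\]
(their \eqref{eq:first}), bounding the \emph{difference} rather than $T_{f,\omega}$ alone. Only then does the K\"ahler current $S\in c\{\varpi\}-\{D\}$ enter, giving $cT_{f,\eta}(r)\ge c'T_{f,\omega_Y}(r)+N^{[1]}_{f,D}(r)-O(\log r)$, so that the two counting terms cancel and $T_{f,\omega_Y}(r)=O(\log r)$ follows. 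Your chain needs the same cancellation; dropping $-N_f(r,D)$ loses it.
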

\begin{proof}
	Our first step will be to prove an inequality similar to the Arakelov-Nevanlinna inequality of \cite[Theorem~4.1]{BB20} (see \eqref{eq:first}). The method of using a current with Poincar\'e singularities to define a first Nevanlinna characteristic function is essentially the same; the arguments can be explained quite shortly in our context so we will recall them for completeness.
\medskip

For any   $f:\Delta^*\to Y$ with $f(\Delta^*)\not\subset D$, write $f^*\omega=i\sigma(z)dz\wedge d\bar{z}$.	Since $\omega$ has negative holomorphic sectional curvature,    $\sigma(z)\in L^1_{\rm loc}(\Delta^*)$, and 	
\begin{align}\label{eq:sec}
\hess \log |f'|_\omega^2\geq cf^*\omega 
\end{align}
outside $f^{-1}(D)$. Indeed, let $z_0\in \Delta^*$ be so that $f(z_0)\in D$.  By the Ahlfors-Schwarz lemma, around $z_0$ we have
$$
\sigma(z)\lesssim \frac{1}{|z-z_0|^2(\log |z-z_0|^2)^2}.
$$

	If \(\phi\) is a local potential for \(\omega\), this shows that \(\log \sigma(z)  - f^{\ast} \phi + \log | z - z_{0}|^{2}\) is locally bounded from above near \(z_{0}\), and thus extends as a psh function on the whole disk. Applying the \(dd^{c}\)-operator, one gets the inequality of \((1, 1)\)-currents
$$
\hess \log \sigma(z)\geq i\sigma(z)dz\wedge d\bar{z}-[f^{-1}D].
$$ 
Here $f^{-1}D$ is the reduced divisor on $\Delta^*$, and $[f^{-1}D]$ is the associated current. In other words, 
\begin{align}\label{eq:current}
		\hess \log |f'|_\omega^2\geq cf^*\omega-[f^{-1}(D)]. 
		\end{align}
		holds over the whole $\Delta^*$.

We now change our model of the disk into $\Delta^*:=\{z\in \bC\mid 1<|z|<\infty\}$ by taking $z\mapsto \frac{1}{z}$, and define a \emph{Nevanlinna characteristic function} 
$$
	T_{f,\omega}(r):=\int_{2}^{r}\frac{dt}{t}\int_{\Delta_{2,t}}f^*\omega$$  where $\Delta_{2,t}:=\{z\in \Delta^*\mid 2<|z|<t\}$. 
\medskip

By Jensen formula, one has
\begin{align}\label{eq:jensen}
	\int_{2}^{r}\frac{dt}{t}\int_{\Delta_{2,t}}	\hess \log |f'|_\omega^2 =&\int_{0}^{2\pi}\log |f'(re^{i\theta})|_{\omega}\frac{d\theta}{2\pi}-\int_{0}^{2\pi}\log |f'(2e^{i\theta})|_{\omega}\frac{d\theta}{2\pi}\\\nonumber&-2\log \frac{r}{2}\int_{0}^{2\pi}\frac{\d\log |f'|_\omega(2e^{i\theta})}{\d r}\frac{d\theta}{2\pi}.
\end{align}
Using concavity of log, we have
	$$ \int_{0}^{2\pi}\log |f'(re^{i\theta})|_{\omega}\frac{d\theta}{2\pi}\leq \frac{1}{2}\log\int_{0}^{2\pi}  |f'(re^{i\theta})|^2_{\omega}\frac{d\theta}{2\pi}.
$$
Note that
$$
\frac{1}{2\pi r}\frac{d}{d r}(r\frac{d}{dr}T_{f,\omega}(r))=\int_{0}^{2\pi}  |f'(re^{i\theta})|^2_{\omega}\frac{d\theta}{2\pi}.
$$ 
Since $T_{f,\omega}(r)$ and $r\frac{d}{dr}T_{f,\omega}(r)$ are both monotone increasing functions,  we apply  Borel's lemma \cite[Lemma 1.2.1]{NW14} twice so that, for any $\delta>0$ one has  
\begin{align*}
\log\Big(\frac{d}{d r}(r\frac{d}{dr}T_{f,\omega}(r))\Big)&\leq   (1+\delta)\log\Big(r\frac{d}{dr}T_{f,\omega}(r)\Big)\quad &\lVert\\
&=(1+\delta)\log r+(1+\delta)\log\Big( \frac{d}{d r}T_{f,\omega}(r)\Big) &\lVert\\
 &\leq (1+\delta)\log r+ (1+\delta)^2 \log T_{f,\omega}(r)\quad &\lVert
\end{align*} 
Here  $\lVert$  means that the inequality holds outside a Borel set $E\subset (2,\infty)$ of finite Lebesgue measure.  
The above inequalities yield
$$
\frac{1}{2}\log\int_{0}^{2\pi}  |f'(re^{i\theta})|^2_{\omega}\frac{d\theta}{2\pi}\leq \frac{(1+\delta)^2}{2}\log T_{f,\omega}(r)+\frac{\delta}{2}\log r-\frac{1}{2}\log (2\pi)  \quad \lVert.
$$
Putting this into \eqref{eq:jensen}, we get
\begin{align*}
	\int_{2}^{r}\frac{dt}{t}\int_{\Delta_{2,t}}	\hess \log |f'|_\omega^2  \leq \frac{(1+\delta)^2}{2}\log T_{f,\omega}(r)+\frac{\delta}{2}\log r-\frac{1}{2}\log (2\pi) \\-\int_{0}^{2\pi}\log |f'(2e^{i\theta})|_{\omega}\frac{d\theta}{2\pi}-2\log \frac{r}{2}\int_{0}^{2\pi}\frac{\d\log |f'|_\omega(2e^{i\theta})}{\d r}\frac{d\theta}{2\pi}. \quad \lVert. 
\end{align*}
By \eqref{eq:current}, this implies the requested inequality
\begin{align}\label{eq:first}
c_1\log T_{f,\omega}(r)+c_2\log r+c_3\geq cT_{f,\omega}(r)-N_{f,D}^{[1]}(r)\quad \lVert 
\end{align}
for some positive constants $c_1,c_2,c_3$. Here $N_{f,D}^{[1]}(r)$ is the truncated counting function defined by
$$
N_{f,D}^{[1]}(r):=\int_{2}^{r}\frac{dt}{t}\int_{\Delta_{2,t}} [f^{-1}(D)].
$$
Obviously, it is zero if $f$ avoids $D$. Note that in this case, we would have \(T_{f, \omega}(r) \leq \log(r)\), and  the requested extension of \(f\) follows directly  from \cref{prop:criterion}  below  if \(\omega\) were  the restriction of a K\"ahler form in $Y$ on $U$. Since \(\omega\) is merely {\em pseudo-K\"ahler} and is only defined on $U$, one needs some additional work, which makes use of the condition that $\omega$ is closed.
\medskip

By \cref{lem:crucial}, the trivial extension of $\omega$ over $Y$, denoted by $\varpi$, is a closed positive current.   Moreover, the   cohomology class $\{\varpi\}$ is big. By \cref{boucksom} there is a K\"ahler current $S_2\in \{\varpi\}$ which is a smooth K\"ahler form on $Y-E_{nK}(\{\varpi\})$. We also choose a smooth closed $(1,1)$-form $\eta\in \{\varpi\}$. 
\begin{claim}\label{claim1}
	Fix any smooth K\"ahler metric $\omega_Y$ over $Y$. For any $f:\Delta^*\to Y$ with $f(\Delta^*)\not\subset E_{nK}(\{\varpi\}) $, there are positive constants $c_i$ so that 
\begin{align}\label{eq:second}T_{f,\omega}(r)\geq T_{f,\eta}(r)-c_4\log T_{f,\omega_Y}(r)-c_4\log r-c_5 \quad \lVert \\\label{eq:third}
	T_{f,\eta}(r)\geq c_6T_{f,\omega_Y}(r)-c_7\log r-c_8\quad \lVert\\\label{eq:fourth}
	T_{f,\omega_Y}\geq c_9 T_{f,\omega}(r)-c_{10}\log r-c_{11}\quad \lVert
	\end{align}
\end{claim} 
\begin{proof}[Proof of \cref{claim1}]
	Write $D=\sum_{i=1}^{\ell}D_i$. Set $\sigma_i$ to be a section $H^0(Y,\cO_Y(D_i))$ defining $D_i$, and pick a  smooth metric  $h_i$  for  $\cO_Y(D_i)$.  
	Since   $\eta\in\{\varpi\}$, there is a quasi-psh function $\varphi\leq 0$ defined on $Y$ so that $\eta=\varpi-\hess \varphi$.	By \eqref{eq:loglog}, one has 
$$
\varphi\geq -\delta_1\log ( \prod_{i=1}^{\ell}\log ^2 |\ep\cdot \sigma_i|_{  h_i}^2) 
$$
for some $\delta_1>0$ and $\ep>0$. By Jensen's formula, one has
\begin{align*} 
T_{f,\omega}(r)-T_{f,\eta}(r)=\int_{0}^{2\pi}  \varphi\circ f(re^{i\theta}) \frac{d\theta}{2\pi}-\int_{0}^{2\pi}\varphi\circ f(2e^{i\theta})\frac{d\theta}{2\pi}- \log \frac{r}{2}\int_{0}^{2\pi}\frac{\d\varphi\circ f(2e^{i\theta})}{\d r}\frac{d\theta}{2\pi}\\
\geq -\delta_1\int_{0}^{2\pi}  \log (\prod_{i=1}^{\ell}\log^2  |\ep\cdot \sigma_i|_{ h_i}^2)\circ f(re^{i\theta}) \frac{d\theta}{2\pi}-c_4\log r-c_5
\end{align*}
By the concavity of log, one has
$$
-\int_{0}^{2\pi}  \log (\prod_{i=1}^{\ell}\log ^2 |\ep\cdot \sigma_i|_{\cdot h_i}^2)\circ f(re^{i\theta}) \frac{d\theta}{2\pi}\geq -2\sum_{i=1}^{\ell}\log\int_{0}^{2\pi}   (-\log |\ep\cdot \sigma_i|_{ h_i}^2)\circ f(re^{i\theta}) \frac{d\theta}{2\pi}
$$
Using Jensen formula again, one obtains
$$
\int_{0}^{2\pi}   (-\log |\ep\cdot \sigma_i|_{ h_i}^2)\circ f(re^{i\theta}) \frac{d\theta}{2\pi}\leq T_{f,\Theta_{h_i}(D_i)}(r)+O(\log r).
$$
\eqref{eq:second} follows from the fact that 
$$
T_{f,\Theta_{h_i}(D_i)}(r)\leq \delta_2T_{f,\omega_Y}(r) 
$$
for some positive constants $\delta_2$.

Since $S_2$ and $\eta$ are both in $\{\varpi\}$, there is a quasi-psh function $\phi\leq 0$ defined on $Y$ so that $\eta=S_2-\hess \phi$.  Since $S_2$ is smooth over $U_0:=Y-E_{nK}(\varpi)$, and $f(\Delta^*)\cap U_0\neq \varnothing$, $f^*S_2$ is thus well defined on $\Delta^*$. By Jensen formula again, so one has
\begin{align}\label{eq:sample}
T_{f,\eta}(r)-T_{f,S_2}(r) =&-\int_{0}^{2\pi}  \phi\circ f(re^{i\theta})  \frac{d\theta}{2\pi}+\int_{0}^{2\pi}\phi\circ f(2e^{i\theta})\frac{d\theta}{2\pi}-\log \frac{r}{2}\int_{0}^{2\pi}\frac{\d\phi\circ f(2e^{i\theta})}{\d r}\frac{d\theta}{2\pi}\\\nonumber
&\geq -c_7\log r-c_8.
\end{align}
On the other hand, $S_2\geq \ep \omega_Y$ for some constant $\ep>0$ since $S_2$ is a K\"ahler current, one has
$$
T_{f,S_2}(r)\geq \ep T_{f,\omega_Y}(r).
$$
This proves \eqref{eq:third}.

In a similar vein as in \eqref{eq:sample}, one can prove that
$$
T_{f,\eta}(r)-T_{f,\omega}(r)\geq -\delta_3 \log r-\delta_4.
$$
Since $T_{f,\omega_Y}(r)\geq \delta_5 T_{f,\eta}(r)$, \eqref{eq:fourth} follows.
	\end{proof}
Let us prove \cref{criteria1}. For any $f:\Delta^*\to U$ with $f(\Delta^*)\not\subset E_{nK}(\{\varpi\})$, one has $N_{f,D}^{[1]}(r)=0$. Putting \eqref{eq:second} and \eqref{eq:third} into \eqref{eq:first},  we immediately conclude that $T_{f,\omega_Y}(r)\sim  \log r$ when $r\to \infty$. This proves that $f$ extends across the point $\infty$ by \cref{prop:criterion} below, hence $U$ is Picard hyperbolic modulo $E_{nK}(\{\varpi\})$.

Let us prove \eqref{eq:nK}. By \cref{lem:crucial} $\{{\varpi}\}$ is big and nef. By \cref{eq:CT}, one has
\begin{align*} 
E_{nK}(\{\varpi\})={\rm Null}(\{\varpi\}):=\bigcup_{\int_{Z}\{\varpi\}^{\dim Z}=0}Z
\end{align*}
where  the union is taken over all positive dimensional irreducible analytic subvarieties $Z$ in $Y$. If $$Z\not\subset Y- \{y\in U\mid \omega\ \mbox{is strictly positive at}\ y\}.$$ by \eqref{eq:loglog} one has $$\int_{Z}\{\varpi\}^{\dim Z}=\int_{Z^{\rm reg}\cap U}\omega^{\dim Z}>0.$$ This yields \eqref{eq:nK} by \eqref{eq:CT}. \eqref{eq:nK} is proved.
\medskip

Let us now prove \cref{criteria2}. Since $c\{\varpi\}-\{D\}$ is big, by \cref{boucksom} one can take a K\"ahler current $S_3\in \{c\varpi\}-\{D\} $  which is smooth outside the non-K\"ahler locus   $E_{nK}(\{c\varpi\}-\{D\})$. Let $f:\Delta^*\to Y$ be a curve which is not contained in $E_{nK}(\{c\varpi\}-\{D\})\cup D$. Since   $\{S_3+[D]\}=\{c\varpi\}=c\{\eta\}$, similar arguments as \eqref{eq:sample} show that
$$
 cT_{f,\eta}(r)-T_{f,S_3+[D]}(r)\geq -c_9\log r-c_{10}.
$$ 
Moreover,
$$
T_{f,S_3+[D]}(r)=T_{f,S_3}(r)+T_{f,[D]}(r)\geq c_{11}T_{f,\omega_Y}(r)+N_{f,D}^{[1]}(r).
$$
Combining these inequalities with \eqref{eq:second}, \eqref{eq:first} and $\eqref{eq:fourth}$, we conclude that $T_{f,\omega_Y}(r)\sim \log r$. This proves that $f$ extends across the point $\infty$  by \cref{prop:criterion} below.
 	\end{proof} 
 We state and prove the following criterion on the  extendibility across the origin of the holomorphic map from the punctured disk to a \emph{compact K\"ahler manifold}.  
 \begin{lem}\label{prop:criterion}
 	Let $(Y,\omega_Y)$ be a compact K\"ahler manifold, and let $f:\Delta^*\to Y$ be a holomorphic map from the punctured disk to $Y$. If $$
 	T_{f,\omega_Y}(r):=\int_{2}^{r}\frac{dt}{t}\int_{\Delta_{2,t}}f^*\omega_Y$$  is bounded from above by $C\log r$ when $r\to \infty$ for some constant $C>0$. Here we consider our model of the punctured disk as $\Delta^*:=\{z\in \bC\mid 1<|z|<\infty\}$ by taking $z\mapsto \frac{1}{z}$.  Then $f$ extends to a holomorphic map $\Delta^*\cup\{\infty\}\to Y$.
 \end{lem}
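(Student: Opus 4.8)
The plan is to show that the hypothesis forces the graph of $f$ to have finite area near the puncture, and then to apply Bishop's theorem (\cref{thm:Bishop}) to extend $f$ across it.

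First I would establish an area estimate. Writing $A(t):=\int_{\Delta_{2,t}}f^*\omega_Y$, positivity of $\omega_Y$ makes $A$ non-decreasing, and $T_{f,\omega_Y}(r)=\int_{2}^{r}A(t)\,\frac{dt}{t}$, so that for $r$ large
\begin{equation*}
\tfrac12\,A(\sqrt r)\,\log r=A(\sqrt r)\int_{\sqrt r}^{r}\frac{dt}{t}\le\int_{\sqrt r}^{r}A(t)\,\frac{dt}{t}\le T_{f,\omega_Y}(r)\le C\log r .
\end{equation*}
Hence $A(\sqrt r)\le 2C$, and letting $r\to\infty$ gives $\int_{\Delta^*}f^*\omega_Y\le 2C<\infty$. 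Passing to the coordinate $w=1/z$, in which $\infty$ corresponds to $w=0$, and fixing a small disk $\Delta_0=\{|w|<\rho\}$, this means that $V:=\{(w,f(w)):0<|w|<\rho\}$, a pure one-dimensional analytic subset of $(\Delta_0\setminus\{0\})\times Y$, has finite volume for the product K\"ahler metric $i\,dw\wedge d\bar w+\omega_Y$: the second summand contributes $f^*\omega_Y$, of finite total mass, and the first contributes at most the area of $\Delta_0$.

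Next I would invoke \cref{thm:Bishop} with $X=\Delta_0\times Y$, $A=\{0\}\times Y$ and $V$ as above, obtaining that the closure $\overline V$ is an analytic subset of $\Delta_0\times Y$. Since $f$ is continuous on $\Delta_0\setminus\{0\}$, $\overline V$ coincides with $V$ over $\Delta_0\setminus\{0\}$; and being the closure of the irreducible curve $V$, it is irreducible of dimension one. I would then note that the first projection $\pi\colon\overline V\to\Delta_0$ is proper (because $Y$ is compact) and has discrete fibres (being non-constant from an irreducible one-dimensional space), hence is finite; it is bijective and unramified over $\Delta_0\setminus\{0\}$, so of degree one, and therefore a biholomorphism onto $\Delta_0$ (a finite holomorphic map of degree one onto a smooth curve is a biholomorphism). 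Composing $\pi^{-1}$ with the projection to $Y$ then yields a holomorphic map $\bar f\colon\Delta_0\to Y$ whose graph is $\overline V$; as $V\subset\overline V$, the map $\bar f$ restricts to $f$ on $\Delta_0\setminus\{0\}$, and glueing with $f$ on the remainder of $\Delta^*$ produces the desired extension $\Delta^*\cup\{\infty\}\to Y$.

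I expect the area estimate to be routine. The delicate point is the last part of the second step: ensuring that $\overline V$ is genuinely the graph of a holomorphic map over all of $\Delta_0$ --- that is, that no positive-dimensional or multivalued branch appears over $w=0$ --- which is exactly where the injectivity (degree one) of $\pi$ away from the puncture enters.
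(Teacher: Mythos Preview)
Your proposal is correct and follows essentially the same route as the paper: bound the area $\int f^*\omega_Y$ near the puncture from the logarithmic growth of $T_{f,\omega_Y}$, deduce that the graph of $f$ has locally finite volume in the product, apply Bishop's theorem (\cref{thm:Bishop}) to obtain an analytic closure $\overline V$, and observe that the first projection $\overline V\to\Delta$ is a proper degree-one map and hence an isomorphism. Your area estimate (yielding the bound $2C$) and your justification of the final isomorphism are in fact slightly sharper and more detailed than the paper's, but the strategy is identical.
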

\begin{proof}
	We  claim that    $\int_{\Delta_{2,t}}f^*\omega_Y<3C$ for any $t>0$.  Or else, there is $r_0>0$ so that $\int_{\Delta_{2,t}}f^*\omega_Y\geq 3C$  when $t\geq r_0$. Then 
	$$
 	T_{f,\omega_Y}(r)\geq 3C(\log r -\log r_0)\geq 2C\log r
	$$
	if $r\gg 0$.  This contradicts with our assumption.

For simplicity, let us now change our model of the punctured disk to $\Delta^*:=\{z\in \bC\mid 0<|z|<1\}$ by taking $z\mapsto \frac{1}{z}$. Then one has
	$$
\int_{\{z\in \bC\mid 0<|z|<\frac{1}{2}\}} f^*\omega_Y< 3C.
$$
	Consider the graph $V$ of $f$, which is an one dimensional closed analytic subvariety of  $\Delta^*\times Y$. Let us equip $\Delta\times Y$ with the K\"ahler metric $\omega':=q_1^*\omega_e+ q_2^*\omega_Y$, where $q_1:\Delta\times Y\to \Delta$ and $q_2:\Delta\times Y\to Y$ is the projection map, and $\omega_e:=idz\wedge d\bar{z}$. Then the volume of the analytic set    $V\cap  \{z\mid 0<|z|<\frac{1}{2}\}\times Y$ with respect to the K\"ahler metric $\omega'$ is 
	$$
	\int_{\{z\in \bC\mid 0<|z|<\frac{1}{2}\}} f^*\omega_Y+\omega_e\leq 3C+\pi.
	$$
 We now apply   \cref{thm:Bishop} to conclude that the closure of $V$ in $\Delta\times Y$, denoted by  $\overline{V}$, is an one dimensional closed analytic subset. Hence the map $q_1|_{\overline{V}}:\overline{V}\to \Delta$ is a \emph{proper} holomorphic map, which is an isomorphism over $\Delta^*$.  Therefore, $q_1|_{\overline{V}}$ is moreover an isomorphism. The composition $q_2\circ (q_1|_{\overline{V}})^{-1}:\Delta\to Y$ is a holomorphic map which extends $f$. The proposition is proved.
	\end{proof}
\begin{rem}
Note that \cref{prop:criterion} is a well-known result when $Y$ is a projective manifold; see \emph{e.g.} \cite[2.11. Cas \guillemotleft local \guillemotright]{Dem97b} or \cite[Lemma 6.5]{Siu15}.  For their strategy of the proof, they use sufficiently many global rational functions on $Y$ to reduce the theorem to holomorphic maps  $\Delta^* \to\bP^1$ and then apply Nevanlinna's logarithmic derivative lemma to conclude. Our proof of \cref{prop:criterion} thus  also provides an alternative and simplified proof in the projective setting. 
 \end{rem}

\section{Criterion for algebraic hyperbolicity}
In this section we will establish an algebraic analogue to \cref{thm:criterion}. 
\begin{thm}\label{thm:algebraic}
	Let $(Y,D)$ be a compact K\"ahler log pair. Assume that $U:=Y-D$ is equipped with a pseudo-K\"ahler metric $\omega$  whose holomorphic sectional curvature   is bounded above by a negative constant $-2\pi c$, then 
	\begin{thmlist}
		\item \label{acriteria1} $U$ is  algebraic hyperbolic modulo   $E_{nK}(\{\varpi\})$, where $\varpi$ is the closed positive $(1,1)$-current on $Y$ which is the trivial extension of $\omega$. 
		\item \label{acriteria2}If $c\{\varpi\}- \{D\}$ is a big class, then $Y$ is algebraically hyperbolic modulo    $E_{nK}(c\{\varpi\}- \{D\})\cup D$.
	\end{thmlist}  
\end{thm}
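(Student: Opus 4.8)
The plan is to transport, onto a normalized curve in $Y$, the length–area argument behind Demailly's theorem that Kobayashi hyperbolicity implies algebraic hyperbolicity \cite{Dem97} (and its extension to pairs by Chen \cite{Che04}), using the pseudo-K\"ahler metric $\omega$ together with the global positivity of its trivial extension recorded in \cref{lem:crucial}; the bookkeeping is parallel to that of \cref{thm:criterion}. Let $C\subset Y$ be a reduced irreducible curve with $C\not\subset D$, let $\nu\colon\tilde C\to C$ be the normalization, write $g:=g(\tilde C)$, $k:=i_Y(C,D)=\#\nu^{-1}(D)$, $B:=\tilde C\setminus\nu^{-1}(D)$, and $f:=\nu|_B\colon B\to U$. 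Since $\varpi$ is smooth on $U$ and, by the $\log\log$-estimate \eqref{eq:loglog}, has $L^1$ local potentials with vanishing Lelong numbers along $D$ (cf.\ the proof of \cref{lem:crucial}), the pull-back $\nu^*\varpi$ is a closed positive $(1,1)$-current on $\tilde C$ carrying no mass on the finite set $\nu^{-1}(D)$; consequently
\[
	\{\varpi\}\cdot[C]=\int_{\tilde C}\nu^*\varpi=\int_B f^*\omega .
\]

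First I would check that $B$ is hyperbolic, i.e.\ not biholomorphic to $\bP^1$, $\bC$ or $\bC^*$. For a holomorphic map from a parabolic Riemann surface into $U$ one has $f^*\omega\equiv 0$: this is the Ahlfors–Schwarz lemma used exactly as in the proof of \cref{lem:crucial} (on $\bP^1$ via Gauss–Bonnet and the negativity of the holomorphic sectional curvature; on $\bC$ and $\bC^*$ by exhausting with disks of radius tending to $\infty$). By the displayed identity this forces $\{\varpi\}\cdot[C]=0$, which is impossible for the curves allowed in \cref{acriteria1} and \cref{acriteria2}: in the first case $\{\varpi\}$ is big and nef by \cref{lem:crucial}, so $\{\varpi\}\cdot[C]=0$ would put $C$ inside ${\rm Null}(\{\varpi\})=E_{nK}(\{\varpi\})$ by \cref{eq:CT}; in the second case, choosing via \cref{boucksom} a K\"ahler current $S'\in c\{\varpi\}-\{D\}$ with $S'\geq\delta'\omega_Y$ and ${\rm Sing}(S')\subset E_{nK}(c\{\varpi\}-\{D\})$, one gets $(c\{\varpi\}-\{D\})\cdot[C]=\int_{\tilde C}\nu^*S'\geq\delta'\deg_{\omega_Y}C>0$, hence $c\{\varpi\}\cdot[C]>\{D\}\cdot[C]\geq 0$. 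Now that $B$ carries its complete hyperbolic metric $\rho_B$, the bound $-2\pi c$ on the holomorphic sectional curvature of $\omega$ and the Ahlfors–Schwarz lemma give $f^*\omega\lesssim\rho_B$ on $B$, and Gauss–Bonnet for the finite-type surface $B$ yields
\[
	\{\varpi\}\cdot[C]=\int_B f^*\omega\ \lesssim\ \int_B\rho_B\ \lesssim\ -\chi(B)=2g(\tilde C)-2+i_Y(C,D).
\]

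To finish \cref{acriteria1}: since $\{\varpi\}$ is big, \cref{boucksom} supplies a K\"ahler current $S\in\{\varpi\}$ with $S\geq\delta\omega_Y$ and ${\rm Sing}(S)\subset E_{nK}(\{\varpi\})$; for $C\not\subset E_{nK}(\{\varpi\})$ the current $\nu^*S$ is well defined on $\tilde C$, has total mass $\{\varpi\}\cdot[C]$, and dominates $\delta\,\nu^*\omega_Y$, whence $\{\varpi\}\cdot[C]\geq\delta\deg_{\omega_Y}C$. Combined with the previous display this gives $2g(\tilde C)-2+i_Y(C,D)\gtrsim\deg_{\omega_Y}C$, i.e.\ \cref{def:DC} with $Z=E_{nK}(\{\varpi\})$. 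For \cref{acriteria2} I would in addition use $i_Y(C,D)\leq\{D\}\cdot[C]$ — the number of distinct points of $\nu^{-1}(D)$ is bounded by the degree of the effective divisor $\nu^*D$ on $\tilde C$, which makes sense precisely because $C\not\subset D$ — to upgrade the estimate to $2g(\tilde C)-2\gtrsim(c\{\varpi\}-\{D\})\cdot[C]$; applying \cref{boucksom} to the big class $\alpha:=c\{\varpi\}-\{D\}$ and arguing with its K\"ahler current as above gives $\alpha\cdot[C]\geq\delta'\deg_{\omega_Y}C$ for $C\not\subset E_{nK}(\alpha)$, hence $2g(\tilde C)-2\gtrsim\deg_{\omega_Y}C$, yielding \cref{def:DC} with $Z=E_{nK}(\alpha)\cup D$.

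The hyperbolic geometry on the curve (Ahlfors–Schwarz plus Gauss–Bonnet) is classical; the crux is the pair of \emph{transcendental} comparisons, namely the identity $\int_B f^*\omega=\{\varpi\}\cdot[C]$ between an analytic area and an intersection number, and the lower bounds $\{\varpi\}\cdot[C]\gtrsim\deg_{\omega_Y}C$ (respectively the analogue for $c\{\varpi\}-\{D\}$). Both rest on controlling the local potential of $\varpi$ along $D$ and the singularities of the K\"ahler currents along the non-K\"ahler locus, which is exactly what \eqref{eq:loglog}, the bigness and nefness in \cref{lem:crucial}, and Boucksom's \cref{boucksom} provide; one should also make sure that stripping off the $i_Y(C,D)$-term in \cref{acriteria2} introduces no sign error, which is where the hypothesis $C\not\subset D$ enters.
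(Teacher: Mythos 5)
Your argument is correct and, in its overall architecture, matches the paper's: both proofs reduce the statement to the two estimates $2g(\tilde C)-2+i_Y(C,D)\geq c\,\{\varpi\}\cdot\{C\}$ and $\{\varpi\}\cdot\{C\}\geq \ep\deg_{\omega_Y}C$ (resp.\ the analogue for $c\{\varpi\}-\{D\}$), and the second estimate is obtained in exactly the same way, via Boucksom's K\"ahler current from \cref{boucksom} together with the identification $\widetilde{\nu^*\omega}=\nu^*\varpi$ coming from the vanishing Lelong numbers in \eqref{eq:loglog}. Where you genuinely diverge is in the first estimate. The paper never uniformizes $B=\tilde C\setminus\nu^{-1}(D)$: it pulls back $\omega$ to $\tilde C^{\circ}$, notes that in dimension one the hypotheses of \cref{lem:pseudo} are automatic, and integrates the resulting curvature-current inequality $\Theta_{h_{K_{\tilde C}+P}}(K_{\tilde C}+P)\geq c\,\widetilde{\nu^*\omega}$ over $\tilde C$, so that $2g(\tilde C)-2+i_Y(C,D)=\deg(K_{\tilde C}+P)\geq c\,\{\varpi\}\cdot\{C\}$ drops out with the exact constant $c$ and with no case analysis. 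Your route (Ahlfors--Schwarz against the complete hyperbolic metric $\rho_B$ plus Gauss--Bonnet) proves the same inequality but costs you two extra items of bookkeeping: (a) you must first rule out $B\cong\bP^1,\bC,\bC^*$, which you do correctly by showing parabolicity would force $\{\varpi\}\cdot\{C\}=0$ against the K\"ahler-current lower bound (using \cref{eq:CT} in case (i)); and (b) for \cref{acriteria2} the symbol $\lesssim$ in your central display is not enough --- the cancellation of $i_Y(C,D)$ against $\{D\}\cdot\{C\}$ requires the \emph{sharp} constant, i.e.\ $f^*\omega\leq\frac{1}{2\pi c}\rho_B$ with $\rho_B$ normalized so that $\int_B\rho_B=2\pi(2g(\tilde C)-2+i_Y(C,D))$; this is indeed what the extremal case of Ahlfors--Schwarz gives, but it should be stated explicitly rather than hidden in $\lesssim$, and it is precisely the point the paper's route via \eqref{eq:compare} handles automatically. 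With that constant made explicit, your proof is complete and is a legitimate, more classical alternative to the argument in the text.
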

\begin{proof}
	By \cref{lem:crucial}, we know that $\{\varpi\}$ is big. 
Let $C$ be any  irreducible reduced curve not contained in $D\cup E_{nK}(\{\varpi\})$. Set $\nu:\tilde{C}\to C$ to be the normalization. Write $\tilde{C}^\circ:=\nu^{-1}(U)$, and denote by $P:=\nu^{-1}(D)$ the reduced divisor on $\tilde{C}$. By \eqref{eq:nK}, $\nu^*\omega$ is also a pseudo-K\"ahler metric on $\tilde{C}^\circ$. Since  the holomorphic sectional curvature of $\omega$  is bounded from above by a negative constant $-c$, by the curvature decreasing property, the holomorphic sectional curvature of $\nu^*\omega$ is also bounded above by $-2\pi c$. As in the proof of \cref{lem:pseudo}, $\nu^*\omega$ induces a singular hermitian metric $h_{K_{\tilde{C}}+P}$ whose curvature current is positive. Moreover, by \eqref{eq:compare}, one has
$$
 \Theta_{h_{K_{\tilde{C}}+P}}(K_{\tilde{C}}+P)\geq c\widetilde{\nu^*\omega}
$$
	where $\widetilde{\nu^*\omega}$ is the closed positive current on $\tilde{C}$ which is the trivial extension of $\nu^*\omega$. By \eqref{eq:loglog}, the Lelong numbers of the local potentials \(\phi\) of \(\varpi\) are \(0\), so using \(\nu^\ast \varpi \overset{\mathrm{loc}}{=} dd^{c}(\phi \circ \nu)\), one can easily check that $\widetilde{\nu^*\omega}=\nu^*\varpi$. 
Hence
\begin{align}\label{eq:elementary}
2g(\tilde{C})-2+i_Y(C,D)=\int_{\tilde{C}} \Theta_{h_{K_{\tilde{C}}+P}}(K_{\tilde{C}}+P)\geq c\int_{\tilde{C}}\nu^*\varpi=c\{C\}\cdot \{\varpi\},
\end{align}
where we use the notation in \cref{def:DC}. 

Fix a K\"ahler form $\omega_Y$ on $Y$. By \cref{boucksom} one can choose a K\"ahler current $S_1\in \{\varpi\}$ which is smooth outside $E_{nK}(\{\varpi\})$. Hence, there is a constant $\ep>0$ so that $S_1\geq \ep\omega_Y$. Since $C$ is not contained in $E_{nK}(\{\varpi\}$, one has
$$
\{C\}\cdot \{\varpi\}=\int_{\tilde{C}}\nu^*S_1\geq \ep\int_{\tilde{C}}\nu^*\omega_Y=\ep\deg_{\omega_Y}C.
$$
Putting this inequality into \eqref{eq:elementary}, we obtain 
$$
2g(\tilde{C})-2+i_Y(C,D)\geq c\ep\deg_{\omega_Y}C.
$$
The first claim follows since $c>0$ and $\ep>0$ does not depend on $C$.

If $c\{\varpi\}- \{D\}$ is big, by \cref{boucksom} again there is a K\"ahler current $S_2\in c\{\varpi\}- \{D\}$ which is smooth outside $E_{nK}(c\{\varpi\}- \{D\})$. Hence there is a constant $\ep_2>0$ so that $S_2\geq \ep_2\omega_Y$.  If $C$ is not contained in $D\cup E_{nK}(c\{\varpi\}- \{D\})$, by $ S_2+ [D]\in c\{\varpi\}$ one has
$$
c\{C\}\cdot \{\varpi\}=\int_{\tilde{C}}\nu^*(S_2+ D)\geq \ep_2\int_{\tilde{C}}\nu^*\omega_Y+ i_Y(C,D)=\ep_2\deg_{\omega_Y}C+ i_Y(C,D).
$$
Putting this to \eqref{eq:elementary}, we obtain 
$$
2g(\tilde{C})-2 \geq  \ep_2\deg_{\omega_Y}C.
$$
This proves the second claim.
	\end{proof}
\section{Proof of \cref{main}}
 We are now ready to prove \cref{main}.
\begin{proof}[Proof of \cref{main}]
	Take a compact K\"ahler manifold $Y$ compactifying $U$ so that $D:=Y-U$ is simple normal crossing. 	By \cref{lem:kahler}, the nilpotent harmonic bundle induces a pseudo-K\"ahler metric $\omega$  on $U$ whose holomorphic bisectional curvature is non-positive and holomorphic sectional curvature is bounded from above by $-\frac{1}{2^{\rank E-1}}$. One can  then apply the criterion in \cite[Theorem 2]{Cad16} or  \cite[Theorem 1.6]{BC17} to conclude that $U$ is of log general type. Alternatively, by \cref{lem:pseudo}, $K_Y+D\geq \frac{1}{4^{\rank E-1}\cdot 2 \pi}\{\varpi\}$ where $\varpi$ is the closed positive current on $Y$ which is the trivial extension of $\omega$. Since $\{\varpi\}$ is big, $K_Y+D$ is also big. This also proves that $U$ is of log general type.  Hence $Y$ is both a K\"ahler and Moishezon manifold, hence projective. 	By \cref{unique}, any compact complex manifold compactifying  $U$ is bimeromorphic to $Y$. This proves the uniqueness of algebraic structure of $U$ by Chow's theorem.
	
 It follows from \cref{criteria1,acriteria1} that $U$ is pseudo-Picard hyperbolic and pseudo-algebraically hyperbolic. 
\end{proof}

A direct consequence of \cref{criteria1,acriteria1} is the following result.
\begin{cor}
	Let $U$ be a quasi-projective manifold. If $U$ is equipped with a K\"ahler metric $\omega$ with holomorphic sectional curvature bounded from above by a negative constant, then $U$ is Picard hyperbolic and algebraically hyperbolic. \qed
\end{cor}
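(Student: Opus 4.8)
The plan is to deduce this directly from the two criteria \cref{criteria1} and \cref{acriteria1}, the only new input being that the genuine K\"ahler hypothesis on $\omega$ forces the relevant non-K\"ahler locus into the boundary divisor. First I would fix, using Hironaka's theorem, a smooth projective compactification $Y$ of the quasi-projective manifold $U$ such that $D:=Y-U$ is a simple normal crossing divisor. The metric $\omega$ is in particular a pseudo-K\"ahler form on $U=Y-D$, and its holomorphic sectional curvature is bounded above by $-2\pi c$ for some $c>0$, so the hypotheses of \cref{thm:criterion} and \cref{thm:algebraic} are met. Consequently $U$ is Picard hyperbolic modulo $E_{nK}(\{\varpi\})$ and algebraically hyperbolic modulo $E_{nK}(\{\varpi\})$, where $\varpi$ is the trivial extension of $\omega$ to $Y$ (a closed positive current with $\{\varpi\}$ big, by \cref{lem:crucial}).

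Next I would use the strict positivity of $\omega$. Since $\omega$ is an honest K\"ahler metric on $U$, it is strictly positive at \emph{every} point of $U$, so the inclusion \eqref{eq:nK} gives
$$
E_{nK}(\{\varpi\})\subset Y-\{y\in U\mid \omega\text{ is strictly positive at }y\}=Y-U=D.
$$
Thus the non-K\"ahler locus is contained in the boundary divisor $D$.

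It then remains to upgrade the two ``modulo $E_{nK}(\{\varpi\})$'' statements to statements with empty exceptional locus. For Picard hyperbolicity: any holomorphic $f:\Delta^*\to U$ has image disjoint from $D$, hence disjoint from $E_{nK}(\{\varpi\})$, so by \cref{criteria1} it extends to a holomorphic map $\bar f:\Delta\to Y$; this means one may take $Z=\varnothing$ in \cref{def:Picard}, i.e.\ $U$ is Picard hyperbolic. For algebraic hyperbolicity: because $E_{nK}(\{\varpi\})\subset D$, the exceptional set $E_{nK}(\{\varpi\})\cup D$ occurring in \cref{acriteria1} equals $D$, so the inequality $2g(\tilde C)-2+i_Y(C,D)\geq \epsilon\deg_{\omega_Y}C$ already holds for every reduced irreducible curve $C\subset Y$ not contained in $D$; by \cref{def:DC} this says precisely that $(Y,D)$, and hence $U$, is algebraically hyperbolic. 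No genuine difficulty is expected here: the single point needing care is the elementary observation above that the strict positivity of $\omega$ throughout $U$, combined with \eqref{eq:nK}, confines $E_{nK}(\{\varpi\})$ to the boundary.
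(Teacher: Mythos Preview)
Your proposal is correct and is precisely the argument the paper has in mind: the corollary is stated with an immediate \qed as ``a direct consequence of \cref{criteria1,acriteria1},'' and your elaboration---applying those two criteria and then using \eqref{eq:nK} together with the strict positivity of $\omega$ on all of $U$ to force $E_{nK}(\{\varpi\})\subset D$, whence the exceptional locus becomes empty---is exactly the intended reasoning.
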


The above result gives a new proof of the following theorem by Borel \cite{Bor72}, Kobayashi-Ochiai \cite{KO71} and Pacienza-Rousseau \cite{PR07}.
\begin{thm}
	Let $U$ be a quasi-projective quotient of bounded symmetric domain by a torsion free lattice. Then $U$ is Picard hyperbolic and algebraically hyperbolic. 
\end{thm}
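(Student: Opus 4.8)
The plan is to realize such a quotient $U$ as a quasi-compact Kähler manifold that supports a nilpotent (indeed, polarized variation of Hodge structure type) harmonic bundle with injective Higgs field, so that the result follows directly from \cref{main}, or more efficiently from the preceding corollary. Concretely, let $\Omega = G/K$ be a bounded symmetric domain and $\Gamma \subset G$ a torsion-free lattice, so $U = \Gamma \backslash \Omega$ is a quasi-projective manifold (by Baily--Borel in the arithmetic case, and by the work of Mok and others in general; in any case this is part of the hypothesis ``quasi-projective quotient''). The Bergman metric $\omega_{\mathrm{Berg}}$ on $\Omega$ is $G$-invariant, hence descends to a Kähler metric on $U$. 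The key differential-geometric input is that the holomorphic sectional curvature of the Bergman metric (equivalently, any invariant metric) on a bounded symmetric domain is bounded above by a \emph{negative} constant; this is classical. Therefore $U$ carries a Kähler metric with holomorphic sectional curvature $\le -2\pi c < 0$, and the corollary (itself a consequence of \cref{criteria1,acriteria1}) immediately gives that $U$ is Picard hyperbolic and algebraically hyperbolic.

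First I would recall the curvature computation: on an irreducible bounded symmetric domain the invariant metric is Kähler--Einstein with negative Einstein constant, and a direct root-theoretic computation (or the classical references of Borel and Kobayashi--Ochiai) shows the holomorphic sectional curvature is pinched between two negative constants; for a reducible domain one takes the product metric and the holomorphic sectional curvature is still bounded above by a negative constant (the maximum of the constants of the factors, which is still negative). Second I would note that this metric is genuinely Kähler (not merely pseudo-Kähler) and is defined on all of $U$, so we are exactly in the hypothesis of the corollary with $\omega$ the descended Bergman metric, $Y$ any smooth projective (log-)compactification of $U$ with $D = Y - U$ simple normal crossing (such compactifications exist by Hironaka applied to any of the standard compactifications), and $-2\pi c$ an upper bound for the holomorphic sectional curvature. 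Third, applying the corollary yields Picard hyperbolicity and algebraic hyperbolicity of $U$; no exceptional locus appears because the metric is strictly positive everywhere, so $E_{nK}(\{\varpi\}) = \varnothing$ by \eqref{eq:nK}.

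There is really no serious obstacle here: the whole content is the classical negativity of the holomorphic sectional curvature of invariant metrics on bounded symmetric domains, which we may cite, combined with the criterion already established in the paper. If one instead wanted to route the argument through \cref{main} proper (to emphasize the harmonic bundle perspective), one would recall that $U = \Gamma\backslash\Omega$ supports a tautological $\mathbb{C}$-polarized variation of Hodge structure — coming from the standard representation of $G$ — whose period map is the identity (up to covering), hence has zero-dimensional fibers, so the associated system of Hodge bundles gives a nilpotent harmonic bundle $(E,\theta,h)$ with $\theta : T_U \to \End(E)$ an isomorphism everywhere; \cref{main} then applies verbatim. Either way the statement is an immediate corollary, and I would present the shorter curvature-based argument as the main line with the Hodge-theoretic reformulation as a remark.

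\begin{proof}
Let $U = \Gamma\backslash\Omega$ where $\Omega$ is a bounded symmetric domain and $\Gamma$ a torsion-free lattice. The Bergman metric on $\Omega$ is invariant under the automorphism group, hence descends to a Kähler metric $\omega$ on $U$. By the classical curvature computations for invariant metrics on bounded symmetric domains (see \cite{Bor72,KO71}), the holomorphic sectional curvature of $\omega$ is bounded from above by a negative constant $-2\pi c$. Choose a smooth projective compactification $Y$ of $U$ with $D := Y - U$ a simple normal crossing divisor. Since $\omega$ is a genuine Kähler metric on $U = Y - D$ with holomorphic sectional curvature bounded above by $-2\pi c < 0$, the preceding corollary applies and shows that $U$ is Picard hyperbolic and algebraically hyperbolic. (Equivalently, one may observe that $U$ carries a tautological $\bC$-polarized variation of Hodge structure whose period map has zero-dimensional fibers, giving a nilpotent harmonic bundle with injective Higgs field, and invoke \cref{main}.)
\end{proof}
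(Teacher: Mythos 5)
Your proof is correct and follows essentially the same route as the paper: descend the Bergman metric to $U$, invoke the classical negative upper bound on its holomorphic sectional curvature, and apply the preceding corollary. The extra remark on the Hodge-theoretic reformulation is fine but not needed.
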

\begin{proof}
	Since the Bergman metric on $U$ is K\"ahler  with holomorphic sectional curvature bounded from above by a negative constant, the Picard hyperbolicity and algebraic hyperbolicity of $U$ follows from the above corollary immediately.
\end{proof}


  \section{Hyperbolicity for the compactification after finite unramified cover}\label{sec:strong}  
  In this section we will prove \cref{main2} using ideas similar to \cite[Proof of Theorem 5.1]{Den21}.
 \begin{thm}\label{strong}
 	Let $(Y,D)$ be a compact K\"ahler log pair.  Assume that    there is a nilpotent harmonic bundle $(E,\theta,h)$ on $U:=Y-D$ so that $\theta:T_{U}\to \End(E)$ is injective at one point.   Then there is a log morphism $\mu:(X,\tilde{D})\to (Y,D)$  from a projective log pair $(X,\tilde{D})$ which is a finite unramified cover over $U$ such that
 	\begin{thmlist}
 		\item \label{general type}  any irreducible subvariety  of $X$ non contained in  the analytic subvariety  $\mu^{-1}(E_{nK}(\{\varpi\}))\cup \tilde{D}$  is  of  general type; 
 		\item  \label{pseudo-Picard}   $X$  is    Picard hyperbolic modulo $\mu^{-1}(E_{nK}(\{\varpi\}))\cup \tilde{D}$;
 		\item \label{algebraically hyperbolic} $X$ is algebraically hyperbolic modulo $\mu^{-1}(E_{nK}(\{\varpi\}))\cup \tilde{D}$.
 	\end{thmlist}
 Here  $\varpi$ is the trivial extension of the pseudo-K\"ahler form $\omega=-i{\rm tr}(\theta_h^*\wedge\theta)$ on $U$ defined in \eqref{eq:pseudo Kahler}. Moreover, we have
\begin{align}\label{eq:inclusion}
 E_{nK}(\{\varpi\}) \subset   Y- \{y\in U\mid \theta\ \mbox{is injective at}\ y\}.
 \end{align}
 \end{thm}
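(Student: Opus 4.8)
The plan is to adapt the argument of \cite[proof of Theorem~5.1]{Den21} to our transcendental setting, using the big and nef class $\{\varpi\}$ produced by \cref{lem:crucial} in place of the Griffiths line bundle of a $\bC$-PVHS. First I would fix the basic objects: by \cref{lem:kahler}, $\omega=-i\,{\rm tr}(\theta_h^*\wedge\theta)$ is a pseudo-K\"ahler metric on $U$ with non-positive holomorphic bisectional curvature and holomorphic sectional curvature bounded above by a negative constant $-2\pi c$, and by \cref{lem:crucial,lem:pseudo} its trivial extension $\varpi$ to $Y$ has big and nef class and satisfies $K_Y+D\ge c\{\varpi\}$. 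The inclusion \eqref{eq:inclusion} is then immediate: $\omega$ is strictly positive at a point $y\in U$ exactly when the pull-back metric $h_U=\theta^*h$ is positive definite there, i.e.\ exactly when $\theta$ is injective at $y$, so \eqref{eq:inclusion} is just a restatement of \eqref{eq:nK} in \cref{criteria1}.

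Next I would construct the cover. Exactly as in \cite{Den21}, I would consider the monodromy representation $\rho\colon\pi_1(U)\to{\rm GL}(\rank E,\bC)$ of the flat bundle $(E,D_h+\theta+\theta_h^*)$, whose image $\Gamma$ is finitely generated, hence residually finite. Fixing a large integer $m$ to be determined later, residual finiteness yields a finite-index normal subgroup of $\Gamma$ avoiding every nontrivial power $\gamma_i^{\,\ell}$, $0<|\ell|<m$, of the local monodromy $\gamma_i$ around each component $D_i$ of $D$ (after, if necessary, first trivialising those $\gamma_i$ of finite order); let $\mu_0\colon\tilde U\to U$ be the associated finite \'etale cover, $X$ a resolution of the normalisation of $Y$ in $\bC(\tilde U)$ (which has only quotient singularities), and $\mu\colon X\to Y$ the resulting generically finite morphism, with $\tilde D=\mu^{-1}(D)_{\rm red}$ simple normal crossing. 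As in \emph{loc.\ cit.}\ one checks that $\mu^*D-m\tilde D$ is an effective $\bQ$-divisor; hence
\[
\{\tilde D\}\ \le\ \tfrac1m\,\mu^*\{D\}\ \le\ \tfrac Nm\,\mu^*\{\varpi\}
\]
for a constant $N$ independent of $m$, the last step using that the big class $\{\varpi\}$ dominates the K\"ahler class and hence $\{D\}$. Since $\varpi$ has zero Lelong numbers everywhere by \eqref{eq:loglog}, the Lelong-number computation from the proof of \cref{thm:algebraic} shows that $\mu^*\varpi$ is the trivial extension of the pseudo-K\"ahler metric $\mu^*\omega$ on $\tilde U$ and that $\mu^*\{\varpi\}$ is again big and nef.

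With this at hand the three conclusions would follow in a few lines. For \cref{general type}: the log-canonical pull-back formula gives $K_X+\tilde D=\mu^*(K_Y+D)\ge c\,\mu^*\{\varpi\}$, whence $K_X\ge(c-\tfrac Nm)\mu^*\{\varpi\}$, a positive multiple of a big class once $m$ is large; so $K_X$ is big and, being K\"ahler and Moishezon, $X$ is projective and of general type (as in the proof of \cref{main}). For an irreducible subvariety $Z\subset X$ not contained in $\mu^{-1}(E_{nK}(\{\varpi\}))\cup\tilde D$, I would take a desingularisation $\nu\colon\hat Z\to Z$ with $\hat P:=\nu^{-1}(\tilde D)$ simple normal crossing; by \eqref{eq:inclusion} the variety $Z$ meets the locus where $\omega>0$, so $\nu^*\mu^*\omega$ is genuinely pseudo-K\"ahler on $\hat Z\setminus\hat P$, its curvatures still satisfy the hypotheses of \cref{lem:pseudo} by the curvature-decreasing property, and $(\mu\nu)^*\{\varpi\}$ is big on $\hat Z$ since $\mu(Z)\not\subset E_{nK}(\{\varpi\})={\rm Null}(\{\varpi\})$ (\cref{eq:CT}); repeating the computation with $\hat P$ in place of $\tilde D$ (using that $\mu^*D-m\tilde D$ stays effective after restriction and pull-back) yields $K_{\hat Z}\ge(c-\tfrac Nm)(\mu\nu)^*\{\varpi\}$, big for the same $m$, so $Z$ is of general type. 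Finally, for \cref{pseudo-Picard,algebraically hyperbolic} I would apply \cref{criteria2,acriteria2} to the log pair $(X,\tilde D)$ with the pseudo-K\"ahler metric $\mu^*\omega$ (whose holomorphic sectional curvature is still $\le-2\pi c$, as $\mu$ is \'etale over $U$): the class $c\{\mu^*\varpi\}-\{\tilde D\}\ge(c-\tfrac Nm)\mu^*\{\varpi\}$ is big for $m$ large, and one verifies — exactly as in \cite{Den21}, using $m[\tilde D]\le\mu^*[D]$ together with a K\"ahler current of $\{\varpi\}$ smooth off $E_{nK}(\{\varpi\})$ — that $E_{nK}\bigl(c\{\mu^*\varpi\}-\{\tilde D\}\bigr)\subset\mu^{-1}(E_{nK}(\{\varpi\}))\cup\tilde D$, so that \cref{criteria2,acriteria2} give precisely the stated modulus.

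The hard part, as in \cite{Den21}, will be the construction of the cover with $\mu^*D-m\tilde D$ effective for a prescribed $m$: one must control the local monodromies of $(E,D_h+\theta+\theta_h^*)$ around \emph{every} component of $D$ — in particular handle separately those of finite order — then cope with the quotient singularities produced by normalisation, and keep careful track of the exceptional divisors introduced by the resolution so that these too lie in $\tilde D$ with large coefficient in $\mu^*D$. The second, more minor, delicate point is the comparison of non-K\"ahler loci needed in the last step; by contrast, the curvature-decreasing argument invoked to apply \cref{lem:pseudo} on the possibly singular, possibly degenerate $\hat Z$ is routine.
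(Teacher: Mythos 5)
Your overall strategy (replace the Griffiths line bundle by the big and nef class $\{\varpi\}$, build a cover with large ramification along $D$, and feed the resulting big class into \cref{criteria2,acriteria2} and \cref{lem:pseudo}) is indeed the paper's strategy, and your treatment of \eqref{eq:inclusion}, of the identity between $\mu^*\varpi$ and the trivial extension of $\mu^*\omega$ via Lelong numbers, and of the comparison of non-K\"ahler loci is essentially what the paper does. But there is a genuine gap at the step you yourself flag as ``the hard part'': you assert that the cover can be chosen so that $\mu^*D-m\tilde D$ is effective, i.e.\ that \emph{every} component of $\tilde D$ occurs with multiplicity at least $m$ in $\mu^*D$. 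This is impossible whenever the local monodromy $\gamma_i$ of $\cL$ around some $D_i$ has finite order $d$: every finite-index subgroup of the monodromy group contains $\gamma_i^d$, so the ramification index of $\mu$ along the corresponding components divides $d$ and cannot be forced to exceed $m$. Your parenthetical ``first trivialising those $\gamma_i$ of finite order'' does not repair this --- once the monodromy around a component is trivial, the cover is unramified there and the multiplicity is $1$. The statement actually available (\cref{claim}, i.e.\ \cite[Claim 5.2]{Den21}) is only a dichotomy: each component $\tilde D_j$ either satisfies ${\rm ord}_{\tilde D_j}(\mu^*D)\ge m$ or carries trivial local monodromy of $\mu^*\cL$. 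Consequently your key inequality $\{\tilde D\}\le\frac1m\mu^*\{D\}$, and everything downstream of it (the bound $K_X\ge(c-\frac Nm)\mu^*\{\varpi\}$ and the bigness of $c\{\mu^*\varpi\}-\{\tilde D\}$), fails in general.

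The missing idea that closes this gap in the paper is \cref{prop:extension}: writing $D_X\subset\tilde D$ for the sum of the components with non-trivial (hence large-multiplicity) monodromy, the pulled-back nilpotent harmonic bundle \emph{extends} across $\tilde D-D_X$ to a nilpotent harmonic bundle on $X-D_X$, by Mochizuki's correspondence between tame pure imaginary harmonic bundles and semisimple local systems. One therefore obtains a pseudo-K\"ahler metric $\omega_2$ on all of $X-D_X$ extending $\mu^*\omega$, and \cref{criteria2,acriteria2} and \cref{lem:pseudo} are applied to the pair $(X,D_X)$ --- where $\mu^*D-mD_X\ge 0$ genuinely holds --- rather than to $(X,\tilde D)$. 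Since $D_X\subset\tilde D$, the resulting exceptional locus is still contained in $\mu^{-1}(E_{nK}(\{\varpi\}))\cup\tilde D$, which is all the theorem claims. Without this extension step (or some substitute for handling the components with finite local monodromy), your argument does not go through; the rest of your outline, including the reduction of \cref{general type} to a desingularized subvariety and the curvature-decreasing argument, matches the paper.
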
  
	We will need the following  crucial result proved in \cite[Claim 5.2]{Den21} to find the desired covering $\mu: {X}\to Y$ in \cref{strong}. The proof is based on residual  finiteness of the global monodromy group and Cauchy's argument theorem.
 \begin{lem}\label{claim}
Let $Y$ be a projective manifold and let $D=\sum_{j=1}^{\ell}D_j$ be a simple normal crossing divisor on $Y$. Assume that there is a complex local system $\mathcal{L}$ over $U:=Y-D$. Then for any $m>0$,  there is a smooth projective log pair $(X,\tilde{D})$ and a log morphism $\mu:(X,\tilde{D}=\sum_{i=1}^{N}\tilde{D}_i)\to (Y,D)$ which is unramified over $U$ so that for each $j=1,\ldots,\ell$,   one has
 	\begin{itemize}
 		\item either  ${\rm ord}_{\tilde{D}_j}(\mu^*D) \geq m,$ 
 		\item or the local monodromy group of $\mu^*\mathcal{{L}}$ around $\tilde{D}_j$ is trivial.\qed
 	\end{itemize} 
 \end{lem}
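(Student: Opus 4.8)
The plan is to realise $\mu$ as (a log-resolution of) the normalisation of $Y$ inside a well-chosen finite Galois \'etale cover of $U$, and to control the ramification data via the local toric structure of such covers (the Kawamata covering lemma). First I would fix the monodromy representation $\rho\colon\pi_1(U)\to GL(r,\bC)$ of $\mathcal L$ and set $\Gamma:=\rho(\pi_1(U))$, a finitely generated linear group, hence residually finite by Malcev's theorem. For each component $D_j$ of $D$ choose a small loop $\gamma_j$ around $D_j$; along a stratum $D_{j_1}\cap\cdots\cap D_{j_s}$ the corresponding loops may be taken to commute, so for $\mathbf a=(a_1,\dots,a_s)\in\bZ_{\ge 0}^s$ one obtains a well-defined (up to conjugacy, which is all that will matter) element $g_{\mathbf a}:=\rho(\gamma_{j_1}^{a_1}\cdots\gamma_{j_s}^{a_s})\in\Gamma$. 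Call $g_{\mathbf a}$ \emph{short} if $a_1+\cdots+a_s<m$; there are only finitely many short elements, as there are finitely many strata and finitely many $\mathbf a$ of bounded coordinate sum. By residual finiteness, for each short $g_{\mathbf a}\neq 1$ choose a finite-index normal subgroup of $\Gamma$ not containing it, and let $N\trianglelefteq\Gamma$ be the intersection of all of these: this is again a finite-index normal subgroup, and it has the key property that \emph{every short $g_{\mathbf a}$ that lies in $N$ is trivial}.

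Next I would let $K:=\rho^{-1}(N)\trianglelefteq\pi_1(U)$, take the corresponding finite Galois \'etale cover $\mu_0\colon\tilde U\to U$, let $\bar X$ be the normalisation of $Y$ in $\bC(\tilde U)$ with induced finite morphism $\bar\mu\colon\bar X\to Y$ (which restricts to $\mu_0$ over $U$), and finally take a toroidal log-resolution $\pi\colon X\to\bar X$ that is an isomorphism over $\tilde U$, putting $\mu:=\bar\mu\circ\pi$ and $\tilde D:=(\mu^{\ast}D)_{\mathrm{red}}$. Since $\bar\mu$ is \'etale over $U$ and $D$ is simple normal crossing, the standard local analysis of such covers (the Kawamata covering lemma; see \cite[Theorem 2.23]{Kol07} and the discussion in \cite{Den20}) shows that over a point of the stratum $D_{j_1}\cap\cdots\cap D_{j_s}$, $\bar X$ is locally $\Delta^{n-s}$ times the affine toric variety attached to the cone $\bR_{\ge 0}^s$ with respect to the finite-index sublattice $L\subseteq\bZ^s$ which is the kernel of $\bZ^s=\pi_1((\Delta^\ast)^s)\to\Gamma/N$, $\mathbf a\mapsto g_{\mathbf a}\bmod N$. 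In particular $\bar X$ has abelian quotient singularities, so $(X,\tilde D)$ is a smooth projective log pair (projectivity being inherited from $Y$ through a finite cover and a resolution), and $\mu\colon(X,\tilde D)\to(Y,D)$ is a log morphism unramified over $U$.

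Finally I would check the dichotomy for each component $\tilde D_i$ of $\tilde D$. Near its generic point, lying over some stratum $D_{j_1}\cap\cdots\cap D_{j_s}$, the component $\tilde D_i$ corresponds in the toric description to a primitive vector $\mathbf a=(a_1,\dots,a_s)\in L$ with all $a_i\ge 0$ --- a ray generator of $\bR_{\ge 0}^s$ when $\tilde D_i$ dominates a boundary component of $\bar X$, an interior ray of a subdivided subcone when $\tilde D_i$ is $\pi$-exceptional. By the argument principle a small loop around $\tilde D_i$ descends to the class $\gamma_{j_1}^{a_1}\cdots\gamma_{j_s}^{a_s}$ in $\pi_1(U)$, so the local monodromy of $\mu^{\ast}\mathcal L$ around $\tilde D_i$ is $g_{\mathbf a}$; pulling back a local defining equation $z_{j_1}\cdots z_{j_s}$ of $D$ gives $\mathrm{ord}_{\tilde D_i}(\mu^{\ast}D)=a_1+\cdots+a_s$. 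If this sum is $\ge m$ we are in the first case. Otherwise $g_{\mathbf a}$ is short and, since $\mathbf a\in L$, lies in $N$; by the defining property of $N$ it is trivial, so the local monodromy of $\mu^{\ast}\mathcal L$ around $\tilde D_i$ is trivial, and we are in the second case.

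The step I expect to be the main obstacle is the toric/local part of the second paragraph: making the local model of $\bar X$ precise along \emph{every} stratum of $D$ and extracting from it, uniformly for \emph{all} components of $\tilde D$ --- crucially including the exceptional divisors of the resolution $\pi$ --- the two identities $\mathrm{ord}_{\tilde D_i}(\mu^{\ast}D)=\sum_i a_i$ and ``local monodromy $=g_{\mathbf a}$''. It is exactly these exceptional divisors that force one to separate from $N$ not merely the powers $g_j^k$ of a single local generator but all short products $g_{\mathbf a}$. The remaining ingredients (Malcev's theorem, stability of finite index under finite intersection, and the winding-number computation) are routine.
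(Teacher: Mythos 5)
Your proposal is correct and follows essentially the same route as the paper's source for this lemma (Den21, Claim 5.2, whose strategy the paper summarizes as residual finiteness of the global monodromy group plus Cauchy's argument theorem): one passes to the finite \'etale cover of $U$ cut out by a finite-index normal subgroup separating the problematic monodromy elements, normalizes $Y$ in it, resolves, and computes orders and local monodromies by a winding-number argument in the local toric model. Your additional care in separating \emph{all} short products $g_{\mathbf a}$ along every stratum, rather than only the powers of single local generators, is precisely what is needed to obtain the dichotomy for every component of $\tilde D$ on the smooth model, including the exceptional divisors of the resolution.
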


Let us now prove \cref{strong}.
\begin{proof}[Proof of \cref{strong}] By the proof of \cref{main},   $Y$ is a projective manifold.    For the $(1,1)$-form  $\omega$ on $U$ defined by
	\begin{align}\label{eq:pseudo Kahler2}
	\omega=-i {\rm tr}(\theta_h^*\wedge\theta),
	\end{align}  by \cref{lem:pseudo}, we know that $\omega$ is a pseudo-K\"ahler form whose holomorphic sectional curvature is bounded from above by $-\frac{1}{4^{\rank E-1}}$. Let $\varpi$ be the positive closed $(1,1)$-current on $Y$ which is the trivial extension of $\omega$. By \cref{lem:crucial}, the class $\{\varpi\}$ is big and nef. Choose $\ep>0$ so that $\{\varpi\}-\ep D$ is still big and 
\begin{align}\label{eq:equal}
E_{nK}(\{\varpi\}-\ep \{D\})=E_{nK}(\{\varpi\}).
\end{align} 
Pick $m\gg 0$ so that $m\ep\geq 2^{2\rank E-1} \pi$. Let $\mathcal{{L}}$ be the local system relative to the tame harmonic bundle. By \cref{claim}, we find a log morphism $\mu:(X,\tilde{D}=\sum_{i=1}^{N}\tilde{D}_i)\to (Y,D)$ from a smooth projective log pair $(X,\tilde{D})$ which is unramified over $U$ satisfying the properties therein.  
	
Set $D_X\subset \tilde{D}$ to be the sum of  all $\tilde{D}_j$'s so that the local monodromy group of $\mu^*\mathcal{L}$ around $\tilde{D}_j$ is \emph{not} trivial. Then by the dichotomy in \cref{claim},  $
\mu^*D-mD_X
$ is an effective divisor, and the monodromy of $\mu^*\mathcal{L}$  around $\tilde{D}_i$ with $\tilde{D}_i\not\subset D_X$ is trivial. By \cref{prop:extension} below, the   pull-back harmonic bundle extends to a nilpotent harmonic bundle  over $X-D_X$. Such a nilpotent harmonic bundle induces a pseudo-K\"ahler metric $\omega_2$ on $X-D_X$. One has $\omega_2=\mu^*\omega$ over $\tilde{U}$. $\omega_2$ thus has non-positive holomorphic bisectional curvature and holomorphic sectional curvature bounded from above by $-\frac{1}{4^{\rank E-1}}$. Denote by $\varpi_2$ the closed positive current which is the trivial extension of $\omega_2$. 
\begin{claim}\label{claim3}
$\mu^*\varpi=\varpi_2$.
\end{claim}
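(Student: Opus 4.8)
**Proof proposal for Claim 5.3 ($\mu^*\varpi = \varpi_2$).**

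The plan is to compare the two closed positive currents $\mu^*\varpi$ and $\varpi_2$ on $X$ by exploiting that they agree on a Zariski-dense open set and that neither carries any mass on the exceptional locus. Both currents are closed positive $(1,1)$-currents on $X$, and on the dense open set $\tilde{U} = \mu^{-1}(U)$ (away from $D_X$) we have $\mu^*\varpi = \mu^*\omega = \omega_2 = \varpi_2$, since $\varpi$ (resp. $\varpi_2$) is by definition the trivial extension of $\omega$ (resp. $\omega_2$), and $\omega_2 = \mu^*\omega$ over $\tilde U$ as noted just before the claim. So the difference $\mu^*\varpi - \varpi_2$ is a closed current of order zero supported on the analytic set $A := \mu^{-1}(D) \cup D_X = \mu^{-1}(D)$ (note $D_X \subset \mu^{-1}(D)$). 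The strategy is then a support/mass argument: a closed positive current supported on an analytic set of codimension $\geq 1$ is, by the support theorem for normal currents, a combination of integration currents along its codimension-one components; but I will instead argue more directly that each of $\mu^*\varpi$ and $\varpi_2$ individually puts no mass on $A$ and equals the trivial extension of its restriction to $X - A$, which forces their equality.

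First I would show $\mu^*\varpi$ has zero Lelong numbers everywhere, in particular along each component of $A$. By \eqref{eq:loglog} of \cref{lem:crucial}, for admissible coordinates the local potential $\phi$ of $\varpi$ satisfies $\phi \gtrsim -\log\big(\prod_{j}(-\log|z_j|^2)\big)$, so $\varpi$ has vanishing Lelong numbers on $Y$; pulling back by the finite (hence locally quasi-finite, generically $\mu$ a branched cover) morphism $\mu$, the potential of $\mu^*\varpi$ is $\phi \circ \mu$ up to a pluriharmonic term, and a log-log bound is preserved under composition with $\mu$ (which is given in local coordinates by $z_1 \mapsto z_1^\ell$ etc.), so $\mu^*\varpi$ also has zero Lelong numbers everywhere. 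The same argument applied to $\omega_2$ on $X - D_X$: the extended nilpotent harmonic bundle on $X - D_X$ satisfies the hypotheses of \cref{lem:crucial} (pseudo-Kähler, holomorphic sectional curvature $\leq -\tfrac{1}{4^{\rank E - 1}} < 0$), so $\varpi_2$ has zero Lelong numbers too. Next, since $\mu^*\varpi$ has zero Lelong number along every component of $A$ and is positive and closed, its restriction to $X - A$ has locally finite mass near $A$ (indeed $\mu^*\varpi$ is already globally defined with locally finite mass), and by \cref{lem:skoda} its trivial extension from $X - A$ is closed positive; because $\mu^*\varpi$ has no mass on $A$ — a closed positive $(1,1)$-current with zero Lelong numbers carries no mass on a divisor, since the mass on $\{z_1 = 0\}$ is controlled by the Lelong number — we get $\mu^*\varpi = \widetilde{(\mu^*\varpi)|_{X-A}}$. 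Identically, $\varpi_2 = \widetilde{(\varpi_2)|_{X-A}}$. But $(\mu^*\varpi)|_{X-A} = (\varpi_2)|_{X-A}$ since both equal $\omega_2 = \mu^*\omega$ there (and the finitely many components of $D_X$ lying in $A$ but meeting $X - \mu^{-1}(D)$ do not occur, as $D_X \subset \mu^{-1}(D)$). Taking trivial extensions of equal currents gives $\mu^*\varpi = \varpi_2$.

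The main obstacle I anticipate is the bookkeeping around the locus $D_X$ versus $\mu^{-1}(D)$: one must make sure that $\varpi_2$, a priori only defined as a current on $X$ via trivial extension across $D_X$, really is closed and positive there — this is exactly \cref{lem:skoda} applied to $\omega_2$ on $X - D_X$, whose finite local mass near $D_X$ follows from the Ahlfors–Schwarz estimate \eqref{eq:bound} in the proof of \cref{lem:crucial} applied to the extended harmonic bundle. A second, more technical point is that the pullback $\mu^*\varpi$ of a current by a finite morphism is legitimate because $\mu$ is a submersion outside a proper analytic subset and the pullback extends across it precisely because $\varpi$ has zero Lelong numbers (cf. the argument already used in the proof of \cref{thm:algebraic}, where $\widetilde{\nu^*\omega} = \nu^*\varpi$ is verified the same way using $\nu^*\varpi \overset{\mathrm{loc}}{=} dd^c(\phi \circ \nu)$). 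Once these two points are in place, the equality is immediate from uniqueness of trivial extensions.
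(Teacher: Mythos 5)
Your proposal is correct and follows essentially the same route as the paper: both hinge on the log-log potential bound \eqref{eq:loglog} pulled back through $\mu$ to get vanishing Lelong numbers, combined with the fact that a closed positive current supported on a divisor with zero Lelong numbers must vanish. The paper packages this by applying the support theorem of \cite[(2.14) Corollary]{Dembook} to the positive difference $\mu^*\varpi-\varpi_2$, while you apply the equivalent ``no mass on divisors'' statement to each current separately and invoke uniqueness of trivial extensions --- a cosmetic rather than substantive difference.
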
 
\begin{proof}[Proof of \cref{claim3}]
By the very definition of trivial extension, we have
\begin{equation} \label{eq:bound2}
\mu^*\varpi \geq \mu^*\varpi-\varpi_2\geq 0.
\end{equation}
	Pick any point $x\in \tilde{D}$, and choose admissible coordinates $(\Omega;x_1,\ldots,x_n)$ and $(\Omega_2;y_1,\ldots,y_n)$ around $x$ and $y=\mu(x)$ with $\mu(\Omega_1)\subset \Omega_2$ so that $\Omega_1\cap \tilde{D}=(x_1\cdots x_{\ell_1}=0)$ and $\Omega_2\cap D=(y_1\cdots y_{\ell_2}=0)$. Since $\mu$ is a log morphism, one has $\mu^*y_i=g_i(x)\prod_{j=1}^{\ell_1}x_{j}^{a_{ij}}$ with $a_{ij}\in \bZ_{\geq 0}$ and $g_i(x)\in \cO(\Omega_1)$. By \eqref{eq:loglog}, the local potential $\phi$ of $\varpi=\hess \phi$ satisfies
	\begin{align*} 
	\phi\gtrsim -\log \Big(\prod_{j=1}^{\ell_1}(-\log |y_j|^2)\Big).
	\end{align*}
Hence the local potential $ \phi\circ \mu$ of $\mu^*\varpi=\hess \phi\circ \mu$ satisfies
	\begin{align*} 
\phi\circ \mu\gtrsim -\log \Big(\prod_{j=1}^{\ell_2}(-\log |x_j|^2)\Big).
\end{align*}
	Therefore, the Lelong numbers of $\mu^*\varpi$ are zero everywhere, and by \eqref{eq:bound2}, the same holds for the positive current $\mu^*\varpi-\varpi_2$. On the other hand, since $\omega_2=\mu^*\omega$,  $\mu^*\varpi-\varpi_2$  is thus supported on $\tilde{D}$. By the support theorem \cite[(2.14) Corollary]{Dembook}, $\mu^*\varpi-\varpi_2=\sum_{i=1}^{N}\lambda_i[\tilde{D}_i]$ with $\lambda_i\geq 0$. Hence $\mu^*\varpi-\varpi_2=0$.
	\end{proof}
Note that
\begin{align*}
\mu^*(\{\varpi\}-\ep \{D\})&=\{\varpi_2\}-\ep \{\mu^*D-mD_X\}-m\ep \{D_X\}\\&= \{\varpi_2\}-2^{2\rank E-1} \pi\{D_X\}-\ep \{\mu^*D-mD_X\}-(m\ep-2^{2\rank E-1} \pi)  \{D_X\}. 
\end{align*}
Recall that $\mu^*D-mD_X\geq 0$, and $m\ep\geq 2^{2\rank E-1} \pi$. Hence 
$$ \{\varpi_2\}-2^{2\rank E-1} \pi\{D_X\} =\mu^*(\{\varpi\}-\ep \{D\})+\{D'\}$$
where $D'$ is some  effective $\bR$-divisor supported in $\tilde{D}$. Therefore, $\{\varpi_2\}-2^{2\rank E-1} \pi\{D_X\}$ is big with its non-K\"ahler locus
$$
E_{nK}(\{\varpi_2\}-2^{2\rank E-1} \pi\{D_X\})\subset E_{nK}(\mu^*(\{\varpi\}-\ep \{D\}))\cup \tilde{D}.
$$
Applying \cref{lem:nK} below to $ \{\varpi\}-\ep \{D\}$,   we obtain
$$
 E_{nK}(\mu^*(\{\varpi\}-\ep \{D\}))\subset  \mu^{-1}(E_{nK}(\{\varpi\}-\ep \{D\}))\cup \tilde{D}.
$$
By \eqref{eq:equal}, one has
\begin{align}\label{eq:final}
E_{nK}(\{\varpi_2\}-2^{2\rank E-1} \pi\{D_X\})\subset \mu^{-1}(E_{nK}(\{\varpi\}))\cup \tilde{D}. 
\end{align}
Recall that the holomorphic sectional curvature of $\omega_2$ is bounded from above by $-\frac{1}{4^{\rank E-1}}$. By \cref{criteria2,acriteria2}, we conclude that $X$ is both Picard hyperbolic and  algebraically hyperbolic modulo $\mu^{-1}(E_{nK}(\{\varpi\}))\cup \tilde{D}$. \cref{pseudo-Picard,algebraically hyperbolic} follows. 

Let $\tilde{Z}\subset X$ be any irreducible closed subvariety which is not contained in $\mu^{-1}(E_{nK}(\{\varpi\}))\cup \tilde{D}$.  Let $g:Z\to \tilde{Z}$ be a desingularization so that $D_Z:=g^{-1}(D_X)$ is a simple normal crossing divisor. Applying \cref{boucksom} we can pick a K\"ahler current 
$$
S\in \{\varpi_2\}-2^{2\rank E-1} \pi\{D_X\}
$$
so that $S$ is smooth outside $\mu^{-1}(E_{nK}(\{\varpi\}))\cup \tilde{D}$ by \eqref{eq:final}. Since $g(\tilde{Z})$ is not contained in $\mu^{-1}(E_{nK}(\{\varpi\}))\cup \tilde{D}$, the pull-back $g^*S$ exists and is a closed positive $(1,1)$ current in $g^*\{\varpi_2\}-2^{2\rank E-1} \pi g^*\{D_X\}$. Hence $g^*\{\varpi_2\}-2^{2\rank E-1} \pi g^*\{D_X\}$ is pseudo effective.

Write $Z^\circ:=Z-D_Z$. We claim that $\omega_3:=g^*\omega_2$ is strictly positive at one point of $Z^\circ$, hence is a pseudo K\"ahler form on $Z^\circ$. Or else, 
$$\int_{\tilde{Z}}\{\varpi_2\}^{\dim \tilde{Z}}=\int_{Z^\circ }(g^*\omega_2)^{\dim Z}=0,$$
which implies that $\tilde{Z}\in E_{nK}(\{\varpi_2\})$ by \cref{eq:CT}.  Since $$E_{nK}(\{\varpi_2\})\subset E_{nK}(\{\varpi_2\}-2^{2\rank E-1} \pi\{D_X\})\subset \mu^{-1}(E_{nK}(\{\varpi\}))\cup \tilde{D},$$
this contradicts with the assumption that  $\tilde{Z}\subset X$  is not contained in $\mu^{-1}(E_{nK}(\{\varpi\}))\cup \tilde{D}$. Therefore, $\omega_3$ is a pseudo K\"ahler form.

By the curvature decreasing property of submanifolds, we conclude that the holomorphic bisectional curvature of $\omega_3$ is non-positive, and the holomorphic sectional curvature of $\omega_3$ is bounded from above by $-\frac{1}{4^{\rank E-1}}$. Let $\varpi_3$ be the closed positive $(1,1)$ current  on $Z$ which is the trivial extension of $\omega_3$.  One can employ a similar method as for \cref{claim3} to show that
$$
g^*\{\varpi_2\}=\{\varpi_3\}.
$$
Recall that   $g^*\{\varpi_2\}-2^{2\rank E-1} \pi g^*\{D_X\}$ is pseudo effective. Since $g^*D_X\geq D_Z$, $ \{\varpi_3\}-2^{2\rank E-1} \pi  \{D_Z\}$ is thus also  pseudo effective. By \cref{lem:pseudo}, one has
$$
\frac{1}{2^{2\rank E-1} \pi}\{\varpi_3\}\leq K_Z+D_Z.
$$  
Hence $K_Z$ is big. \cref{general type} follows.

Lastly, by \eqref{eq:pseudo Kahler2}, $\omega$ is strictly positive at any point where $\theta$ is injective.   \eqref{eq:inclusion} then follows from \eqref{eq:nK}.
\end{proof} 

We state and prove the following crucial extension result for nilpotent tame harmonic bundles across the boundary components around which the local monodromies of the corresponding local system are trivial. Its proof was communicated to us by C. Simpson, and it uses the deep theorem by Mochizuki  on the correspondence between tame pure imaginary harmonic bundles and semisimple local systems over quasi-projective manifolds.

\begin{proposition}\label{prop:extension}
	Let $Y$ be a projective manifold and let $D=\sum_{i=1}^{m}D_i$ be a simple normal crossing divisor on $Y$. Let  $(E,\theta,h)$ be a nilpotent harmonic bundle on $U:=Y-D$, whose corresponding complex local system is denoted by $\mathcal{L}$. Assume that for $i=1,\ldots,r$ the local monodromy of $\mathcal{L}$ around the component  $D_i$ is trivial. Then $(E,\theta,h)$ extends to a nilpotent harmonic bundle on $U':=Y-\sum_{i=r+1}^{m}D_i$.
\end{proposition}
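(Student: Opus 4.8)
The plan is to invoke the non-abelian Hodge correspondence over quasi-projective manifolds, in the form proved by Mochizuki (extending Simpson's one-dimensional case), which gives an equivalence between semisimple complex local systems on a smooth quasi-projective variety and tame pure imaginary harmonic bundles on it. Since $Y$ is projective, both $U=Y-D$ and $U'=Y-\sum_{i>r}D_i$ are quasi-projective, so the theorem applies to each.

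The first point is that the nilpotent harmonic bundle $(E,\theta,h)$ on $U$ is automatically \emph{tame} and \emph{pure imaginary} in the sense required by this correspondence. Indeed, since $\theta$ is pointwise nilpotent on $U$, the eigenvalues of $\theta$ in every direction, hence those of its residues along each component of $D$, all vanish; being zero, they are in particular bounded near $D$ (tameness) and purely imaginary (pure-imaginarity). By Mochizuki's theorem, the local system $\mathcal{L}$ underlying $(E,\theta,h)$ is therefore semisimple.

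Next I would extend the local system. The natural map $\pi_1(U)\to\pi_1(U')$ is surjective with kernel normally generated by the classes of small loops around $D_1,\dots,D_r$; since $\mathcal{L}$ has trivial local monodromy around each of these, its monodromy representation factors through $\pi_1(U')$, so $\mathcal{L}\cong\mathcal{L}'|_U$ for a local system $\mathcal{L}'$ on $U'$. As $\pi_1(U)\to\pi_1(U')$ is surjective, semisimplicity of $\mathcal{L}$ forces $\mathcal{L}'$ to be semisimple. Applying Mochizuki's correspondence on $U'$ yields a tame pure imaginary harmonic bundle $(E',\theta',h')$ on $U'$ with underlying local system $\mathcal{L}'$, unique up to isomorphism of harmonic bundles. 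Restricting to $U$, the harmonic bundle $(E',\theta',h')|_U$ is tame pure imaginary on $Y-D$ (it is even smooth along $D_1,\dots,D_r$) and has underlying local system $\mathcal{L}'|_U=\mathcal{L}$, so by uniqueness it is isomorphic to $(E,\theta,h)$. Hence $(E,\theta,h)$ extends to $(E',\theta',h')$ on $U'$. Finally, nilpotency is inherited: the coefficients of $\det(t-\theta')$ are holomorphic functions on $U'$ that coincide on the Zariski-dense open set $U$ with those of $\det(t-\theta)=t^{\rank E}$, so $\det(t-\theta')=t^{\rank E}$ on all of $U'$ and the extension is nilpotent.

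The delicate point is not the formal assembly above but checking that a nilpotent harmonic bundle genuinely falls within the scope of Mochizuki's theorem — that nilpotency entails tameness and pure-imaginarity — together with the correct invocation of the uniqueness statement in the correspondence; granting these, the extension of the local system and the density argument for nilpotency are routine.
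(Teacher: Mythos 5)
Your proposal is correct and follows essentially the same route as the paper: nilpotency gives tameness and pure-imaginarity, Mochizuki's correspondence yields semisimplicity of $\mathcal{L}$, the local system descends to $U'$ where the correspondence produces the extended harmonic bundle, and nilpotency is recovered by continuity of the characteristic polynomial. The only (presentational) difference is that the paper first decomposes $\mathcal{L}$ into simple isotypic factors and applies Mochizuki's existence and uniqueness factor by factor, tracking the scaling constants $c_\alpha$ and the metrics $g_\alpha$ on the multiplicity spaces --- this is the precise content behind your appeal to ``uniqueness up to isomorphism of harmonic bundles,'' since Mochizuki's uniqueness holds only up to this obvious ambiguity.
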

\begin{proof}
 Since $\theta$ is assumed to be nilpotent, the eigenvalue  of the residue  ${\rm Res}(\theta)$ at each component  $D_i$ is thus zero. Hence $(E,\theta,h)$ is a \emph{tame pure imaginary harmonic bundle} in the sense of \cite[Definition 22.3]{Moc07b}. By \cite[Proposition 22.15]{Moc07b}, $\mathcal{L}$ is semisimple. Hence it is a direct sum $\mathcal{L}:=\oplus_\alpha \mathcal{L}_\alpha\otimes \bC^{m_\alpha}$, where $\mathcal{L}_\alpha$  is a  simple local system and $m_\alpha>0$. Since the local monodromy of $\mathcal{L}$ around the component $D_i$ is trivial for $i=1,\ldots,r$, so is $\cL_\alpha$ for each $\alpha$. Hence $\mathcal{L}_\alpha$ extends to a local system $\mathcal{L}_\alpha'$ on $U'$.  Since the map between   fundamental groups $\pi_1(U)\to \pi_1(U')$ is surjective, $\mathcal{L}_\alpha'$ is thus also  simple.

 By \cite[Theorem 25.21]{Moc07b}, there is a tame pure imaginary harmonic bundle $(E_\alpha,\theta_\alpha,h_\alpha)$ on $U$ whose corresponding local system is $\cL_\alpha$. Moreover, by the uniqueness property of the correspondence between semisimple local systems and tame pure imaginary harmonic bundles proved in \cite[Theorem 25.28]{Moc07b}, one has
 $$
 (E,\theta,h)=\oplus_\alpha\big(\oplus_{m_\alpha}(E_\alpha,\theta_\alpha) , h_\alpha\otimes g_\alpha\big),
 $$
 where $g_\alpha$ denotes a hermitian metric of $\bC^{m_\alpha}$. Since $t^{\rank E}=\det (t-\theta)=\prod_{\alpha}\det (t-\theta_\alpha)^{m_\alpha}$, one has $\det (t-\theta_\alpha)=t^{\rank E_\alpha}$. Hence each $(E_\alpha,\theta_\alpha,h_\alpha)$ is a nilpotent harmonic bundle.

  Again by \cite[Theorem 25.21]{Moc07b}, there is a tame pure imaginary harmonic bundle on $(E'_\alpha,\theta_\alpha',h_\alpha')$ on $U'$ whose corresponding local system is $\mathcal{L}_\alpha'$. The restriction   $(E'_\alpha,\theta_\alpha',h_\alpha')|_{U}$  is thus a tame pure imaginary harmonic bundle  with the corresponding local system $\cL_\alpha'|_{U}=\mathcal{L}_\alpha$. By the uniqueness result in \cite[Theorem 25.28]{Moc07b}, $(E_\alpha',\theta_\alpha')|_{U}=(E_\alpha,\theta_\alpha)$ and $c_\alpha\cdot  h'_\alpha|_{U}= h_\alpha$ for some constant $c_\alpha>0$.  Since characteristic polynomial $\det(t-\theta_\alpha')|_{U}=\det(t-\theta_\alpha)=t^{\rank E_\alpha}$, by continuity $\det(t-\theta_\alpha')=t^{\rank E_\alpha}$ on $U'$. Hence $(E'_\alpha,\theta_\alpha',h_\alpha')$ is also nilpotent.   Therefore, the nilpotent harmonic bundle 
    $$
   (E',\theta',h')=\oplus_\alpha\big(\oplus_{m_\alpha}(E'_\alpha,\theta'_\alpha), c_\alpha\cdot h'_\alpha\otimes  g_\alpha\big) 
   $$
   defined on $U'$ extends $(E,\theta,h)$. The proposition is proved.
	\end{proof} 
 
The following result allows us to control non-K\"ahler locus on ramified covers.
 \begin{lem}\label{lem:nK}
 	Let $\mu:(X,\tilde{D})\to (Y,D)$ be a log morphism between  compact K\"ahler log pairs, which is unramified over $X-\tilde{D}$.  Let $\alpha$ be a big class on $Y$. Then
 	\begin{align}
 	E_{nK}(\mu^*\alpha)\subset \mu^{-1}(E_{nK}(\alpha))\cup \tilde{D}
 	\end{align}
 \end{lem}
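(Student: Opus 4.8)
The plan is to exhibit a single K\"ahler current in the (big) class $\mu^*\alpha$ whose singular locus is contained in $\mu^{-1}(E_{nK}(\alpha))\cup\tilde{D}$; since by definition $E_{nK}(\mu^*\alpha)$ is the intersection of the singular loci of all K\"ahler currents in $\mu^*\alpha$, this yields the asserted inclusion. Concretely, I would pull back a well-chosen K\"ahler current from $Y$ and repair the positivity it loses along the ramification locus --- which is contained in $\tilde{D}$ --- by adding a $dd^c$-exact term whose own singular set is confined to $\tilde{D}$.

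First I would fix a K\"ahler form $\omega_Y$ on $Y$ and a K\"ahler form $\omega_X$ on $X$, and record that $\mu^*\{\omega_Y\}$ is big and nef with $E_{nK}(\mu^*\{\omega_Y\})\subset\tilde{D}$. Nefness is the pullback of nefness; bigness follows from $\int_X(\mu^*\omega_Y)^n=\deg(\mu)\int_Y\omega_Y^n>0$ and Boucksom's criterion. For the non-K\"ahler locus: since $\dim X=\dim Y$ and $\mu$ is unramified over $X-\tilde{D}$, it restricts to a local biholomorphism there, so every positive-dimensional irreducible $Z\not\subset\tilde{D}$ has $\dim\mu(Z)=\dim Z$ and hence $\int_Z(\mu^*\omega_Y)^{\dim Z}=\deg(\mu|_Z)\int_{\mu(Z)}\omega_Y^{\dim Z}>0$; thus $Z\not\subset{\rm Null}(\mu^*\{\omega_Y\})$. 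By \cref{eq:CT}, $E_{nK}(\mu^*\{\omega_Y\})={\rm Null}(\mu^*\{\omega_Y\})\subset\tilde{D}$. Applying \cref{boucksom} to $\mu^*\{\omega_Y\}$ then gives a K\"ahler current $\Theta_0\in\mu^*\{\omega_Y\}$ with analytic singularities, smooth outside $\tilde{D}$, and with $\Theta_0\geq\delta_0\,\omega_X$ for some $\delta_0>0$.

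Next, \cref{boucksom} applied to the big class $\alpha$ yields a K\"ahler current $T\in\alpha$ with analytic singularities, smooth outside $E_{nK}(\alpha)$, and with $T\geq\varepsilon\,\omega_Y$ for some $\varepsilon>0$. Its pullback $\mu^*T$ (a closed positive $(1,1)$-current, since the local potentials of $T$ compose with the holomorphic map $\mu$ to quasi-psh functions) lies in $\mu^*\alpha$, is smooth outside $\mu^{-1}(E_{nK}(\alpha))$, and satisfies $\mu^*T-\varepsilon\,\mu^*\omega_Y=\mu^*(T-\varepsilon\,\omega_Y)\geq 0$. I would then set
$$
S:=\bigl(\mu^*T-\varepsilon\,\mu^*\omega_Y\bigr)+\varepsilon\,\Theta_0 .
$$
This is a closed positive current with $\{S\}=\mu^*\alpha$ (hence $\mu^*\alpha$ is big), it satisfies $S\geq\varepsilon\delta_0\,\omega_X$ and is therefore a K\"ahler current, and --- since $\mu^*\omega_Y$ is smooth --- its singular locus obeys ${\rm Sing}(S)\subset{\rm Sing}(\mu^*T)\cup{\rm Sing}(\Theta_0)\subset\mu^{-1}(E_{nK}(\alpha))\cup\tilde{D}$. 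As $E_{nK}(\mu^*\alpha)\subset{\rm Sing}(S)$, this is exactly the desired inclusion.

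The main obstacle is the inclusion $E_{nK}(\mu^*\{\omega_Y\})\subset\tilde{D}$: this is where the Collins--Tosatti description of the non-K\"ahler locus of a big and nef class enters, together with the geometric input that a log morphism unramified over $X-\tilde{D}$ cannot contract a subvariety meeting $X-\tilde{D}$. The remaining steps are routine manipulations of closed positive currents and their singular sets.
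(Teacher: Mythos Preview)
Your proposal is correct and follows essentially the same approach as the paper's proof: both pull back a K\"ahler current $T\in\alpha$ smooth outside $E_{nK}(\alpha)$, use Collins--Tosatti to see that $E_{nK}(\mu^*\{\omega_Y\})\subset\tilde D$, and then correct $\mu^*T$ by adding the $dd^c$-exact difference between $\varepsilon\,\mu^*\omega_Y$ and a K\"ahler current in $\varepsilon\,\mu^*\{\omega_Y\}$ smooth outside $\tilde D$ (the paper writes this current as $\mu^*\omega+dd^c\varphi$, you call it $\varepsilon\,\Theta_0$). Your write-up spells out the null-locus computation behind the Collins--Tosatti step in more detail than the paper does, but the argument is identical.
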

\begin{proof}
By \cref{boucksom}, one can take a K\"ahler current $T\in \alpha$ with analytic singularities which is smooth outside $E_{nK}(\alpha)$. 	Choose a K\"ahler form $\omega$ on $Y$ so that $T\geq \omega$. Then $\{\mu^*\omega\}$ is a big and nef class, and by \cref{eq:CT}, one has
$$
E_{nK}(\{\mu^*\omega\})\subset \tilde{D}.
$$
Applying \cref{boucksom} again, there is a global quasi-psh function $\varphi$ on $X$ with analytic singularities which is smooth outside $\tilde{D}$ so that
 $
\mu^*\omega+\hess \varphi
$ 
is a K\"ahler current on $X$. It follows from $T\geq \omega$ that
 $
\mu^*T+\hess \varphi\geq \mu^*\omega+\hess \varphi
$. Hence $\mu^*T+\hess \varphi$ is a K\"ahler current with analytic singularities, which is smooth outside $\mu^{-1}(E_{nK}(\alpha))\cup \tilde{D}$. Since $\mu^*T+\hess \varphi\in \mu^*\alpha$, by the very definition of non-K\"ahler locus \cref{def:nK}, one has
$$
E_{nK}(\mu^*\alpha)\subset \mu^{-1}(E_{nK}(\alpha))\cup \tilde{D}.
$$
The lemma is proved.
	\end{proof}

\end{document}